\numberwithin{equation}{section}
\newtheorem{theorem}{Theorem}[section]
\newtheorem{lemma}[theorem]{Lemma}
\newtheorem{definition}[theorem]{Definition}
\newtheorem{corollary}[theorem]{Corollary}
\newtheorem{proposition}[theorem]{Proposition}
\newtheorem{remark}[theorem]{Remark}
\newtheorem{example}[theorem]{Example}
\newtheorem{claim}[theorem]{Claim}
\DeclareSymbolFont{AMSb}{U}{msb}{m}{n}
\DeclareMathSymbol{\N}{\mathalpha}{AMSb}{"4E}
\DeclareMathSymbol{\R}{\mathalpha}{AMSb}{"52}
\DeclareMathSymbol{\Z}{\mathalpha}{AMSb}{"5A}
\DeclareMathSymbol{\C}{\mathalpha}{AMSb}{"43}
\DeclareMathSymbol{\p}{\mathalpha}{AMSb}{"50}
\DeclareMathSymbol{\D}{\mathalpha}{AMSb}{"44}
\DeclareMathSymbol{\X}{\mathalpha}{AMSb}{"58}
\newcommand{\M}{\mathsf{M}}
\newcommand{\m}{\mathsf{m}}
\begin{document}

\title{Localization and Tensorization Properties of the Curvature-Dimension Condition for Metric Measure Spaces}

\author{Kathrin Bacher, Karl-Theodor Sturm}

\date{}

\maketitle

\begin{abstract}
This paper is devoted to the analysis of metric measure spaces satisfying
locally the curvature-dimension condition $\mathsf{CD}(K,N)$ introduced by the second author and also studied by Lott \& Villani.
We prove that the
local version of $\mathsf{CD}(K,N)$ is equivalent to a  global condition $\mathsf{CD}^*(K,N)$, slightly weaker than the (usual, global) curvature-dimension condition.
This so-called reduced curvature-dimension condition $\mathsf{CD}^*(K,N)$ has the {\em local-to-global property}.
We also prove the  {\em tensorization property} for $\mathsf{CD}^*(K,N)$.

As an application  we conclude that
the fundamental group $\pi_1(\mathsf{M},x_0)$ of a  metric measure space $(\mathsf{M},\mathsf{d},\mathsf{m})$ is finite whenever it satisfies
locally the curvature-dimension condition $\mathsf{CD}(K,N)$ with positive $K$ and finite $N$.
\end{abstract}

\section{Introduction}

In two similar but independent approaches, the second author \cite{sa,sb} and Lott \& Villani \cite{lva} presented a concept of generalized lower Ricci bounds for metric measure spaces $(\M,\mathsf{d},\m)$.
The full strength of this concept appears if the condition $\mathsf{Ric}(\M,\mathsf{d},\m)\ge K$ is combined with a kind of upper bound $N$ for the dimension.
This leads to the so-called {\em curvature-dimension condition} $\mathsf{CD}(K,N)$ which makes sense for each pair of numbers $K\in\mathbb{R}$ and $N\in\,[1,\infty]$.

The condition $\mathsf{CD}(K,N)$ for a given metric measure space $(\M,\mathsf{d},\m)$ is formulated in terms of optimal transportation.
For general $(K,N)$ this condition is quite involved. There are two cases which lead to significant simplifications: $N=\infty$ and $K=0$.

\begin{itemize}
\item[$\rightarrow$] The condition $\mathsf{CD}(K,\infty)$, also formulated as $\mathsf{Ric}(\M,\mathsf{d},\m)\ge K$, states that for each pair
$\nu_0,\nu_1\in\mathcal{P}_\infty(\mathsf{M},\mathsf{d},\mathsf{m})$  there exists a geodesic $\nu_t=\rho_t\, m$ in $\mathcal{P}_\infty(\mathsf{M},\mathsf{d},\mathsf{m})$ connecting them such that the relative (Shannon) entropy
$$\mathsf{Ent}(\nu_t|\mathsf{m}):=\int_\M \rho_t \log \rho_t\, dm$$
is $K$-convex in $t\in[0,1]$.

Here $\mathcal{P}_\infty(\mathsf{M},\mathsf{d},\mathsf{m})$ denotes the
space of $\mathsf{m}$-absolutely continuous measures $\nu=\rho \mathsf{m}$ on $\M$ with bounded support. It is equipped with the $\mathsf{L}_2$-Wasserstein distance $\mathsf{d_W}$, see below.
\item[$\rightarrow$]
The condition $\mathsf{CD}(0,N)$ for $N\in\, (1,\infty)$ states that for each pair
$\nu_0,\nu_1\in\mathcal{P}_\infty(\mathsf{M},\mathsf{d},\mathsf{m})$  there exists a geodesic $\nu_t=\rho_t\, \mathsf{m}$ in $\mathcal{P}_\infty(\mathsf{M},\mathsf{d},\mathsf{m})$ connecting them such that the R\'enyi entropy functional
$$\mathsf{S}_{N'}(\nu_t|\mathsf{m}):=-\int_\mathsf{M}\rho_t^{1-1/N'}d\m$$
is convex in $t\in \,[0,1]$ for each $N'\ge N$.
\end{itemize}

For general  $K\in\mathbb{R}$ and $N\in\,(1,\infty)$ the condition $\mathsf{CD}(K,N)$ states that for each pair
$\nu_0,\nu_1\in\mathcal{P}_\infty(\mathsf{M},\mathsf{d},\mathsf{m})$  there exist  an optimal
coupling $\mathsf{q}$ of $\nu_0=\rho_0\mathsf{m}$ and $\nu_1=\rho_1\mathsf{m}$ and a geodesic $\nu_t=\rho_t\, m$ in $\mathcal{P}_\infty(\mathsf{M},\mathsf{d},\mathsf{m})$ connecting them such that
\begin{equation} \label{CD}
\mathsf{S}_{N'}(\nu_t|\mathsf{m})\leq-\int_{\mathsf{M}\times
\mathsf{M}}\left[\tau^{(1-t)}_{K,N'}(\mathsf{d}(x_0,x_1))\rho^{-1/N'}_0(x_0)+
\tau^{(t)}_{K,N'}(\mathsf{d}(x_0,x_1))\rho^{-1/N'}_1(x_1)\right]d\mathsf{q}(x_0,x_1)
\end{equation}
for all $t\in [0,1]$ and all $N'\geq N$.
In order to define the \textit{volume distortion coefficients} $\tau^{(t)}_{K,N}(\cdot)$ we introduce for $\theta\in\R_+$,
\begin{equation*}
\mathfrak{S}_k(\theta):=
\begin{cases}
\frac{\sin(\sqrt{k}\theta)}{\sqrt{k}\theta}& \text{if $k>0$}\\
1& \text{if $k=0$}\\
\frac{\sinh(\sqrt{-k}\theta)}{\sqrt{-k}\theta}& \text{if $k<0$}
\end{cases}
\end{equation*}
and set for $t\in[0,1]$,
\begin{equation*}
\sigma^{(t)}_{K,N}(\theta):=
\begin{cases}
\infty& \text{if $K\theta^2\geq N\pi^2$}\\
t\frac{\mathfrak{S}_{K/N}(t\theta)}{\mathfrak{S}_{K/N}(\theta)}& \text{else}
\end{cases}
\end{equation*}
as well as $\tau^{(t)}_{K,N}(\theta):=t^{1/N}\sigma^{(t)}_{K,N-1}(\theta)^{1-1/N}$.

The definitions of the condition $\mathsf{CD}(K,N)$ in \cite{sa,sb} and \cite{lva} slightly differ. We follow the notation of \cite{sa,sb}, --  except that all probability measures under consideration are now assumed to have bounded support (instead of merely having finite second moments). For non-branching spaces, all these concepts coincide.
In this case, it indeed suffices to verify (\ref{CD}) for $N'=N$ since this already implies (\ref{CD}) for all $N'\ge N$.
To simplify the presentation, we will assume for the remaining parts of the introduction that all metric measure spaces under consideration are non-branching.

\bigskip

Examples of metric measure spaces satisfying the condition $\mathsf{CD}(K,N)$ include
\begin{itemize}
\item Riemannian manifolds and weighted Riemannian spaces \cite{ov}, \cite{co}, \cite{vrs}, \cite{sc}
\item Finsler spaces \cite{oh}
\item Alexandrov spaces of generalized nonnegative sectional curvature \cite{pe}
\item Finite or infinite dimensional Gaussian spaces \cite{sa}, \cite{lv}.
\end{itemize}
Slightly modified versions are satisfied for
\begin{itemize}
\item Infinite dimensional spaces, like the Wiener space \cite{fss}, as well as for
\item Discrete spaces \cite{abs}, \cite{ol}.
\end{itemize}

\bigskip

Numerous important geometric and functional analytic estimates can be deduced from the curvature-dimension condition $\mathsf{CD}(K,N)$. Among them the Brunn-Minkowski inequality, the Bishop-Gromov volume growth estimate,  the Bonnet-Myers diameter bound, and the Lichnerowicz bound on the spectral gap.
Moreover, the condition $\mathsf{CD}(K,N)$ is {stable under convergence}.
However, two questions remained open:
\begin{itemize}
\item[$\vartriangleright$]
whether the curvature-dimension condition $\mathsf{CD}(K,N)$ for general $(K,N)$ is a {\em local property}, i.e. whether $\mathsf{CD}(K,N)$ for all subsets $\M_i$, $i\in I$, of a covering of $\M$ implies $\mathsf{CD}(K,N)$ for a given space $(\mathsf{M},\mathsf{d},\mathsf{m})$;

\item[$\vartriangleright$] whether the curvature-dimension condition $\mathsf{CD}(K,N)$ has the {\em tensorization property}, i.e. whether  $\mathsf{CD}(K,N_i)$ for each factor $\M_i$ with $i\in I$ implies $\mathsf{CD}(K,\sum_{i\in I}N_i)$ for the product space $\M=\bigotimes_{i\in I}\M_i$.

\end{itemize}
Both properties are known to be true -- or easy to verify -- in the particular cases $K=0$ and $N=\infty$. Locality of $\mathsf{CD}(K,\infty)$ was proved in \cite{sa} and, analogously, locality of $\mathsf{CD}(0,N)$ by Villani \cite{vib}. The tensorization property of $\mathsf{CD}(K,\infty)$ was proved in \cite{sa}.

\bigskip

The goal of this paper is to study metric measure spaces satisfying the local version of the curvature-dimension condition $\mathsf{CD}(K,N)$.
We prove that the {\em local} version of $\mathsf{CD}(K,N)$ is equivalent to a {\em global} condition $\mathsf{CD}^*(K,N)$, slightly weaker than the (usual, global) curvature-dimension condition.
More precisely,
$$\mathsf{CD}_{\mathsf{loc}}(K-,N)\Leftrightarrow\mathsf{CD}^*_{\mathsf{loc}}(K-,N)\Leftrightarrow\mathsf{CD}^*(K,N).$$
This so-called {\em reduced curvature-dimension condition} $\mathsf{CD}^*(K,N)$ is obtained from $\mathsf{CD}(K,N)$ by replacing the volume distortion coefficients $\tau^{(t)} _{K,N}(\cdot)$
by the slightly smaller coefficients $\sigma^{(t)} _{K,N}(\cdot)$.

Again the reduced curvature-dimension condition turns out to be {\em stable} under convergence. Moreover, we prove the {\em tensorization property} for $\mathsf{CD}^*(K,N)$.
Finally, also the reduced curvature-dimension condition allows to deduce all the geometric and functional analytic inequalities mentioned above (Bishop-Gromov, Bonnet-Myers, Lichnerowicz, etc), -- however, with slightly worse constants. Actually, this can easily be seen from the fact that for $K>0$
$$\mathsf{CD}(K,N) \Rightarrow\mathsf{CD}^*(K,N)\Rightarrow\mathsf{CD}(K^*,N)$$
with $K^*=\frac{N-1}N K$.

\medskip

As an interesting application of these results we prove that
the fundamental group $\pi_1(\mathsf{M},x_0)$ of a  metric measure space $(\mathsf{M},\mathsf{d},\mathsf{m})$ is finite whenever it satisfies
the local curvature-dimension condition $\mathsf{CD}_{\mathsf{loc}}(K,N)$ with positive $K$ and finite $N$.
Indeed, the local curvature-dimension condition for a given metric measure space $(\M,\mathsf{d},\m)$ carries over to its universal cover $(\mathsf{\hat M},\mathsf{\hat d},\mathsf{\hat m})$. The global version of the reduced curvature-dimension condition then implies a Bonnet-Myers theorem (with non-sharp constants) and thus compactness of $\mathsf{\hat M}$.

\medskip

For the purpose of comparison we point out that a similar, but slightly weaker condition than $\mathsf{CD}(K,N)$ -- the measure contraction property $\mathsf{MCP}(K,N)$ introduced in \cite{oa} and \cite{sb} -- satisfies the tensorization property due to \cite{ob} (where no assumption of non-branching metric measure spaces is used), but does not fulfill the local-to-global property according to \cite[Remark 5.6]{sb}.

\section{Reduced Curvature-Dimension Condition $\mathsf{CD}^*(K,N)$}

Throughout this paper, $(\mathsf{M},\mathsf{d},\mathsf{m})$ always denotes a \textit{metric measure space} consisting of a complete separable metric space $(\mathsf{M},\mathsf{d})$ and a locally finite measure $\mathsf{m}$ on $(\mathsf{M},\mathcal{B}(\mathsf{M}))$, that is, the volume $\mathsf{m}(B_r(x))$ of balls centered at $x$ is finite for all $x\in \mathsf{M}$ and all sufficiently small $r>0$. The metric space $(\mathsf{M},\mathsf{d})$ is called \textit{proper} if and only if every bounded closed subset of $\M$ is compact. It is called a \textit{length space} if and only if $\mathsf{d}(x,y)=\inf\mathsf{Length}(\gamma)$ for all $x,y\in\mathsf{M}$, where the infimum runs over all curves $\gamma$ in $\mathsf{M}$ connecting $x$ and $y$. Finally, it is called a \textit{geodesic space} if and only if every two points $x,y\in\mathsf{M}$ are connected by a curve $\gamma$ with $\mathsf{d}(x,y)=\mathsf{Length}(\gamma)$. Such a curve is called \textit{geodesic}. We denote by $\mathcal{G}(\mathsf{M})$ the space of geodesics $\gamma:[0,1]\rightarrow \mathsf{M}$ equipped with the topology of uniform convergence.

A \textit{non-branching} metric measure space $(\mathsf{M},\mathsf{d},\mathsf{m})$ consists of a geodesic metric space $(\mathsf{M},\mathsf{d})$ such that for every tuple $(z,x_0,x_1,x_2)$ of points in $\mathsf{M}$ for which $z$ is a midpoint of $x_0$ and $x_1$ as well as of $x_0$ and $x_2$, it follows that $x_1=x_2$.

The \textit{diameter} $\mathsf{diam}(\mathsf{M},\mathsf{d},\mathsf{m})$ of a metric measure space $(\mathsf{M},\mathsf{d},\mathsf{m})$ is defined as the diameter of its support, namely, $\mathsf{diam}(\mathsf{M},\mathsf{d},\mathsf{m}):=\sup\{\mathsf{d}(x,y): x,y\in\mathsf{supp}(\mathsf{m})\}$.

We denote by $(\mathcal{P}_2(\mathsf{M},\mathsf{d}),\mathsf{d}_{\mathsf{W}})$ the \textit{$\mathsf{L}_2$-Wasserstein space} of probability measures $\nu$ on $(\mathsf{M},\mathcal{B}(\mathsf{M}))$ with finite second moments which means that $\int_\mathsf{M}\mathsf{d}^2(x_0,x)d\nu(x)<\infty$
for some (hence all) $x_0\in \mathsf{M}$. The \textit{$\mathsf{L}_2$-Wasserstein distance} $\mathsf{d}_{\mathsf{W}}(\mu,\nu)$ between two probability measures
$\mu,\nu\in\mathcal{P}_2(\mathsf{M},\mathsf{d})$ is defined as
$$\mathsf{d}_{\mathsf{W}}(\mu,\nu)=\inf\left\{\left(\int_{\mathsf{M}\times \mathsf{M}}\mathsf{d}^2(x,y)d\mathsf{q}(x,y)\right)^{1/2}:
\text{$\mathsf{q}$ coupling of $\mu$ and $\nu$}\right\}.$$
Here the infimum ranges over all \textit{couplings} of $\mu$ and $\nu$ which are probability measures on $\mathsf{M}\times \mathsf{M}$ with marginals $\mu$ and $\nu$.

The $\mathsf{L}_2$-Wasserstein space $\mathcal{P}_2(\M,\mathsf{d})$ is a complete separable metric space. The subspace of $\mathsf{m}$-absolutely continuous measures  is denoted by $\mathcal{P}_2(\mathsf{M},\mathsf{d},\mathsf{m})$ and the subspace of $\mathsf{m}$-absolutely continuous measures with bounded support  by $\mathcal{P}_\infty(\mathsf{M},\mathsf{d},\mathsf{m})$.

The \textit{$\mathsf{L}_2$-transportation distance} $\D$ is defined for two metric measure spaces $(\M,\mathsf{d},\mathsf{m}),(\M',\mathsf{d}',\mathsf{m}')$ by
$$\D((\M,\mathsf{d},\mathsf{m}),(\M',\mathsf{d}',\mathsf{m}'))=\inf\left(\int_{\M\times \M'}\mathsf{\hat d}^2(x,y')d\mathsf{q}(x,y')\right)^{1/2}.$$
The infimum is taken over all couplings $\mathsf{q}$ of $\mathsf{m}$ and $\mathsf{m}'$ and over all couplings $\mathsf{\hat d}$ of $\mathsf{d}$ and $\mathsf{d}'$. Given two metric measure spaces $(\M,\mathsf{d},\mathsf{m})$ and $(\M',\mathsf{d}',\mathsf{m}')$, we say that a measure $\mathsf{q}$ on the product space $\M\times \M'$ is a \textit{coupling} of $\mathsf{m}$ and $\mathsf{m}'$ if and only if
$$\mathsf{q}(A\times \M')=\mathsf{m}(A) \quad \text{and} \quad \mathsf{q}(\M\times A')=\mathsf{m}'(A')$$
for all $A\in\mathcal{B}(\M)$ and all $A'\in\mathcal{B}(\M')$. We say that a pseudo-metric $\mathsf{\hat d}$ -- meaning that $\mathsf{\hat d}$ may vanish outside the diagonal -- on the disjoint union $\M\sqcup \M'$ is a \textit{coupling} of $\mathsf{d}$ and $\mathsf{d}'$ if and only if
$$\mathsf{\hat d}(x,y)=\mathsf{d}(x,y) \quad \text{and} \quad \mathsf{\hat d}(x',y')=\mathsf{d}'(x',y')$$
for all $x,y\in\mathsf{supp}(\mathsf{m})\subseteq \M$ and all $x',y'\in \mathsf{supp}(\mathsf{m}')\subseteq \M'$.

The $\mathsf{L}_2$-transportation distance $\D$ defines a complete separable length metric on the family of isomorphism classes of normalized metric measure spaces $(\M,\mathsf{d},\mathsf{m})$ satisfying $\int_\M\mathsf{d}^2(x_0,x)d\mathsf{m}(x)<\infty$ for some $x_0\in\M$.

Before we give the precise definition of the reduced curvature-dimension condition $\mathsf{CD}^*(K,N)$, we summarize two properties of the coefficients $\sigma^{(t)}_{K,N}(\cdot)$. These statements can be found in \cite{sb}.

\begin{lemma} \label{sigma}
For all $K,K'\in\R$, all $N,N'\in[1,\infty)$ and all $(t,\theta)\in[0,1]\times \R_+$,
$$\sigma^{(t)}_{K,N}(\theta)^N\cdot\sigma^{(t)}_{K',N'}(\theta)^{N'}\geq\sigma^{(t)}_{K+K',N+N'}(\theta)^{N+N'}.$$
\end{lemma}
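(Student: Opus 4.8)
The plan is to reduce the asserted inequality to a one‑dimensional comparison theorem for second‑order linear ODEs. First I would clear away the degenerate cases. If $t\in\{0,1\}$ or $\theta=0$, both sides are computed directly from the defining formulas and agree; and if one of the two factors on the left is $+\infty$ — which, since $N,N'\ge 1$, happens exactly when the corresponding coefficient $\sigma^{(t)}_{K,N}(\theta)$ or $\sigma^{(t)}_{K',N'}(\theta)$ is infinite — the inequality is trivial because the other factor is strictly positive. Hence I may assume $t\in(0,1)$, $\theta>0$, $K\theta^2<N\pi^2$ and $K'\theta^2<N'\pi^2$. Setting $\lambda:=\tfrac{N}{N+N'}\in(0,1)$ one has $\tfrac{(K+K')\theta^2}{N+N'}=\lambda\,\tfrac{K}{N}\theta^2+(1-\lambda)\,\tfrac{K'}{N'}\theta^2<\pi^2$, so the coefficient on the right is also given by the non‑degenerate formula.

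Next I would encode the three coefficients as solutions of Jacobi‑type boundary value problems. Writing $\mathfrak{s}_\kappa$ for the function with $\mathfrak{s}_\kappa(0)=0$, $\mathfrak{s}'_\kappa(0)=1$ and $\mathfrak{s}''_\kappa=-\kappa\,\mathfrak{s}_\kappa$, one checks that $\sigma^{(s)}_{K,N}(\theta)=\mathfrak{s}_{K/N}(s\theta)/\mathfrak{s}_{K/N}(\theta)$, so the function $u_1(s):=\sigma^{(s)}_{K,N}(\theta)$ solves $\ddot u_1=-\tfrac{K\theta^2}{N}u_1$ with $u_1(0)=0$, $u_1(1)=1$; similarly for $u_2(s):=\sigma^{(s)}_{K',N'}(\theta)$ and $w(s):=\sigma^{(s)}_{K+K',N+N'}(\theta)$ with constants $\tfrac{K'\theta^2}{N'}$ and $c:=\tfrac{(K+K')\theta^2}{N+N'}$ respectively. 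The standing assumptions guarantee $u_1,u_2>0$ on $(0,1]$ and $c<\pi^2$. I would then introduce $v:=u_1^{\lambda}u_2^{1-\lambda}$, which is positive and twice differentiable on $(0,1]$, continuous on $[0,1]$, and satisfies $v(0)=0=w(0)$, $v(1)=1=w(1)$.

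The core computation is the differential inequality $\ddot v\le -c\,v$ on $(0,1]$. Writing $\ddot v/v=(\log v)''+\big((\log v)'\big)^2$, using the elementary bound $-\lambda a^2-(1-\lambda)b^2+\big(\lambda a+(1-\lambda)b\big)^2=-\lambda(1-\lambda)(a-b)^2\le 0$ with $a=\dot u_1/u_1$, $b=\dot u_2/u_2$, and the identity $\lambda\,\tfrac{K}{N}+(1-\lambda)\,\tfrac{K'}{N'}=\tfrac{K+K'}{N+N'}$, one obtains $\ddot v/v=-c-\lambda(1-\lambda)\big(\tfrac{\dot u_1}{u_1}-\tfrac{\dot u_2}{u_2}\big)^2\le -c$. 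Consequently the difference $D:=v-w$ satisfies $\ddot D+cD=(\ddot v+cv)-(\ddot w+cw)\le 0$ on $(0,1)$, together with $D(0)=D(1)=0$.

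Finally I would invoke the maximum principle for the operator $\tfrac{d^2}{ds^2}+c$ on $[0,1]$ with Dirichlet data, which is available precisely because $c<\pi^2$ lies strictly below the first Dirichlet eigenvalue of $-\tfrac{d^2}{ds^2}$ on $[0,1]$. Concretely, pick $p>0$ on $[0,1]$ with $\ddot p+cp=0$ (for instance $p(s)=\cos\big(\sqrt{c}\,(s-\tfrac12)\big)$ if $0<c<\pi^2$, $p\equiv 1$ if $c=0$, $p(s)=\cosh\big(\sqrt{-c}\,(s-\tfrac12)\big)$ if $c<0$); then $\big(p^2(D/p)'\big)'=p\,(\ddot D+cD)\le 0$, so $p^2(D/p)'$ is non‑increasing, and if $D/p$ had a negative minimum at some interior point $s_1$ it would be non‑decreasing on $(0,s_1]$, forcing $D(s_1)/p(s_1)\ge 0$ — a contradiction. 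Hence $D\ge 0$, i.e. $v\ge w$ on $[0,1]$; raising to the power $N+N'$ and using $\lambda(N+N')=N$, $(1-\lambda)(N+N')=N'$ gives $u_1^N u_2^{N'}\ge w^{N+N'}$, which at $s=t$ is exactly the claim. I expect the maximum‑principle step to be the only delicate point: one must pin down that $c$ stays strictly below $\pi^2$ (this is where the finiteness hypotheses $K\theta^2<N\pi^2$, $K'\theta^2<N'\pi^2$ enter) and produce the positive comparison solution $p$; the differential inequality for $v$ and the bookkeeping with $\lambda$ are routine.
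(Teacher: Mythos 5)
The paper itself does not prove this lemma: it simply records it and refers the reader to \cite{sb} (Sturm, \emph{On the geometry of metric measure spaces. II}, Lemma~1.2), so there is no in-paper proof to compare against. Your argument is correct and self-contained, and it runs along the same lines as Sturm's original: observe that $s\mapsto\sigma^{(s)}_{K,N}(\theta)=\mathfrak{s}_{K/N}(s\theta)/\mathfrak{s}_{K/N}(\theta)$ solves the Jacobi equation $\ddot u=-\tfrac{K\theta^2}{N}u$ with boundary values $0$ and $1$; check via the log-derivative (Riccati) identity $\ddot v/v=(\log v)''+((\log v)')^2$ that the geometric mean $v=u_1^\lambda u_2^{1-\lambda}$ (with $\lambda=N/(N+N')$) is a supersolution of $\ddot v+cv=0$ for $c=\tfrac{(K+K')\theta^2}{N+N'}$; and conclude $v\ge w$ by a Sturm-type comparison, valid because $c<\pi^2$ keeps you strictly below the first Dirichlet eigenvalue. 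The bookkeeping $\lambda(N+N')=N$, $(1-\lambda)(N+N')=N'$ then gives the stated power inequality.

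Two small remarks. First, your maximum-principle step is cleaner if you just note directly that $w>0$ on $(0,1)$ (same reason as for $u_1,u_2$), so $w$ itself can serve as the comparison function $p$; then $(w^2(v/w)')'=w(\ddot v+cv)\le 0$ combined with $v(0)/w(0)$, $v(1)/w(1)$ --- interpreting the first quotient by L'H\^opital, which gives $\dot u_1(0)^\lambda\dot u_2(0)^{1-\lambda}/\dot w(0)$, a finite positive number --- gives the conclusion with no need to build a separate $p$. Second, your dismissal of the degenerate cases is slightly too glib at $t=0$: with the definition as written, if $K\theta^2\ge N\pi^2$ while $K'\theta^2<N'\pi^2$ one has $\sigma^{(0)}_{K,N}(\theta)=\infty$ and $\sigma^{(0)}_{K',N'}(\theta)=0$, so the left-hand side is an indeterminate product $\infty\cdot 0$ and the ``both sides agree'' claim needs a convention. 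This is an artifact of the definition not special-casing $t=0$ and is harmless for the uses in the paper, but it is worth saying explicitly which convention you adopt (or simply restricting the boundary discussion to $t=1$ and $\theta=0$ and treating $t=0$ by continuity from $t\in(0,1)$).
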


\begin{remark}
For fixed $t\in(0,1)$ and $\theta\in(0,\infty)$ the function $(K,N)\mapsto\sigma^{(t)}_{K,N}(\theta)$ is continuous, non-decreasing in $K$ and non-increasing in $N$.
\end{remark}

\begin{definition} \label{loccurv}
Let two numbers $K,N\in\R$ with $N\geq 1$ be given.
\begin{itemize}
\item[(i)]We say that a metric measure space $(\mathsf{M},\mathsf{d},\mathsf{m})$ satisfies the reduced curvature-dimension condition $\mathsf{CD}^*(K,N)$
(globally) if and only if for all $\nu_0,\nu_1\in\mathcal{P}_\infty(\mathsf{M},\mathsf{d},\mathsf{m})$ there exist an optimal
coupling $\mathsf{q}$ of $\nu_0=\rho_0\mathsf{m}$ and $\nu_1=\rho_1\mathsf{m}$ and a geodesic
$\Gamma:[0,1]\rightarrow\mathcal{P}_\infty(\mathsf{M},\mathsf{d},\mathsf{m})$ connecting $\nu_0$ and $\nu_1$
such that
\begin{align}  \label{ren}
\mathsf{S}_{N'}(\Gamma(t)|\mathsf{m})&\leq-\int_{\mathsf{M}\times
\mathsf{M}}\left[\sigma^{(1-t)}_{K,N'}(\mathsf{d}(x_0,x_1))\rho^{-1/N'}_0(x_0)+
\sigma^{(t)}_{K,N'}(\mathsf{d}(x_0,x_1))\rho^{-1/N'}_1(x_1)\right]d\mathsf{q}(x_0,x_1)
\end{align}
for all $t\in [0,1]$ and all $N'\geq N$.
\item[(ii)]We say that $(\mathsf{M},\mathsf{d},\mathsf{m})$ satisfies the reduced curvature-dimension condition $\mathsf{CD}^*(K,N)$ locally - denoted by $\mathsf{CD}^*_{\mathsf{loc}}(K,N)$ - if and only if each point $x$ of $\mathsf{M}$ has a neighborhood $M(x)$ such that for each pair $\nu_0,\nu_1\in\mathcal{P}_\infty(\mathsf{M},\mathsf{d},\mathsf{m})$ supported in $M(x)$ there exist an optimal coupling $\mathsf{q}$ of $\nu_0$ and $\nu_1$ and a geodesic
$\Gamma:[0,1]\rightarrow\mathcal{P}_\infty(\mathsf{M},\mathsf{d},\mathsf{m})$ connecting $\nu_0$ and $\nu_1$
satisfying (\ref{ren}) for all $t\in [0,1]$ and all $N'\geq N$.
\end{itemize}
\end{definition}

\begin{remark} \label{remark}
\begin{itemize}
\item[(i)]
 For non-branching spaces, the curvature-dimension condition $\mathsf{CD}^*(K,N)$  -- which is formulated as a condition on probability measures with bounded support -- implies  property (\ref{ren}) for all measures $\nu_0,\nu_1\in\mathcal{P}_2(\mathsf{M},\mathsf{d},\mathsf{m})$. We refer to Lemma \ref{boundedsupport}. An analogous assertion holds for the condition $\mathsf{CD}(K,N)$.
\item[(ii)] In the case $K=0$, the reduced curvature-dimension condition $\mathsf{CD}^*(0,N)$ coincides with the usual one $\mathsf{CD}(0,N)$ simply because $\sigma^{(t)}_{0,N}(\theta)=t=\tau^{(t)}_{0,N}(\theta)$ for all $\theta\in\R_+$.
\item[(iii)] Note that we do not require that $\Gamma(t)$ is supported in $M(x)$ for $t\in \ ]0,1[$ in part (ii) of Definition \ref{loccurv}.
\item[(iv)] Theorem \ref{bishop} will imply that a metric measure space $(\mathsf{M},\mathsf{d},\mathsf{m})$ satisfying $\mathsf{CD}^*(K,N)$ has a proper support. In particular, the support of a metric measure space $(\mathsf{M},\mathsf{d},\mathsf{m})$ fulfilling $\mathsf{CD}^*_{\mathsf{loc}}(K,N)$ is locally compact.
\end{itemize}
\end{remark}

\begin{proposition} \label{implications}
\begin{itemize}
\item[(i)] $\mathsf{CD}(K,N)\Rightarrow$ $\mathsf{CD}^*(K,N)$:
For each metric measure space $(\mathsf{M},\mathsf{d},\mathsf{m})$, the curvature-dimension condition $\mathsf{CD}(K,N)$ for given numbers $K,N\in\R$
implies the reduced curvature-dimension condition $\mathsf{CD}^*(K,N)$.
\item[(ii)] $\mathsf{CD}^*(K,N)\Rightarrow$ $\mathsf{CD}(K^*,N)$:
Assume that $(\mathsf{M},\mathsf{d},\mathsf{m})$ satisfies the reduced curvature-dimension condition $\mathsf{CD}^*(K,N)$ for some $K>0$ and $N\geq 1$. Then $(\mathsf{M},\mathsf{d},\mathsf{m})$ satisfies $\mathsf{CD}(K^*,N)$ for $K^*=\tfrac{K(N-1)}{N}$.
\end{itemize}
\end{proposition}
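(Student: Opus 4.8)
The plan for both parts is the same: \eqref{CD} and \eqref{ren} differ only through the distortion coefficients $\tau^{(t)}_{K,N'}$ versus $\sigma^{(t)}_{K,N'}$, so one keeps the very same optimal coupling $\mathsf q$ and geodesic $\Gamma$ that the hypothesis provides and reduces everything to a \emph{pointwise} comparison of coefficients. Since the weights $\rho_i^{-1/N'}$ in the integrands are nonnegative and enter linearly, such a pointwise inequality transfers verbatim to the integral bounds.

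\emph{Part (i).} I would first establish
\[\sigma^{(t)}_{K,N'}(\theta)\ \le\ \tau^{(t)}_{K,N'}(\theta)\qquad\text{for all }t\in[0,1],\ \theta\ge0,\ N'\ge N.\]
Granting this, fix $\nu_0,\nu_1\in\mathcal P_\infty(\mathsf M,\mathsf d,\mathsf m)$, let $(\mathsf q,\Gamma)$ be supplied by $\mathsf{CD}(K,N)$, and estimate, for every $N'\ge N$,
\[\mathsf S_{N'}(\Gamma(t)|\m)\le-\int\big[\tau^{(1-t)}_{K,N'}(\mathsf d(x_0,x_1))\rho_0^{-1/N'}+\tau^{(t)}_{K,N'}(\mathsf d(x_0,x_1))\rho_1^{-1/N'}\big]d\mathsf q\le-\int\big[\sigma^{(1-t)}_{K,N'}\rho_0^{-1/N'}+\sigma^{(t)}_{K,N'}\rho_1^{-1/N'}\big]d\mathsf q,\]
which is exactly \eqref{ren}; hence $(\mathsf q,\Gamma)$ witnesses $\mathsf{CD}^*(K,N)$ as well. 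The pointwise inequality is Lemma~\ref{sigma} in disguise: applied to the pair $(K,N'-1)$ together with $(K',N'')=(0,1)$, and using $\sigma^{(t)}_{0,1}(\theta)=t$ (Remark~\ref{remark}(ii)), it gives $t\cdot\sigma^{(t)}_{K,N'-1}(\theta)^{N'-1}\ge\sigma^{(t)}_{K,N'}(\theta)^{N'}$; taking $N'$-th roots and recalling $\tau^{(t)}_{K,N'}(\theta)=t^{1/N'}\sigma^{(t)}_{K,N'-1}(\theta)^{1-1/N'}$ concludes. (The range $N'\ge2$ is immediate from Lemma~\ref{sigma}; the remaining range $1\le N'<2$, and the convention $\tau^{(t)}_{K,1}\equiv\infty$ for $K>0$, I would dispatch directly from the explicit formulae.)

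\emph{Part (ii).} Now $K>0$; set $K^*=\tfrac{K(N-1)}{N}$ (so $K^*=0$ if $N=1$). Let $(\mathsf q,\Gamma)$ come from $\mathsf{CD}^*(K,N)$; I claim the same data witness $\mathsf{CD}(K^*,N)$, for which, by \eqref{ren}, it suffices to prove
\[\tau^{(t)}_{K^*,N'}(\theta)\ \le\ \sigma^{(t)}_{K,N'}(\theta)\qquad\text{for all }t\in[0,1],\ \theta\ge0,\ N'\ge N.\]
The crux is the scaling identity $\sigma^{(t)}_{K(M-1)/M,\,M-1}(\theta)=\sigma^{(t)}_{K,M}(\theta)$ for every $M>1$, valid because $\tfrac{K(M-1)/M}{M-1}=\tfrac{K}{M}$ and the two ``$\infty$'' thresholds coincide. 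Together with $\sigma^{(t)}_{K,M}(\theta)\ge t$ (concavity of $\sin$ on $[0,\pi]$ when finite, trivial otherwise), this yields
\[\tau^{(t)}_{K(M-1)/M,\,M}(\theta)=t^{1/M}\,\sigma^{(t)}_{K(M-1)/M,\,M-1}(\theta)^{1-1/M}=t^{1/M}\,\sigma^{(t)}_{K,M}(\theta)^{1-1/M}\le\sigma^{(t)}_{K,M}(\theta).\]
Taking $M=N'$ gives $\tau^{(t)}_{K(N'-1)/N',\,N'}(\theta)\le\sigma^{(t)}_{K,N'}(\theta)$; and since $N'\mapsto\tfrac{N'-1}{N'}$ is increasing, $K^*\le K\tfrac{N'-1}{N'}$, so by monotonicity of $\tau$ in its first parameter (the remark following Lemma~\ref{sigma}, together with $\sigma$ being non-decreasing in $K$) we obtain $\tau^{(t)}_{K^*,N'}(\theta)\le\tau^{(t)}_{K(N'-1)/N',\,N'}(\theta)\le\sigma^{(t)}_{K,N'}(\theta)$, as required. (For $N=1$ this reduces to $\tau^{(t)}_{0,N'}(\theta)=t\le\sigma^{(t)}_{K,N'}(\theta)$.) The main obstacle is minimal: neither implication is deep once the correct pointwise inequality is isolated. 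In (i) the only care is aligning Lemma~\ref{sigma} with the right parameters and the low-dimensional/$\infty$ conventions; in (ii) the single genuine idea is the scaling identity $\sigma^{(t)}_{K^*,N-1}=\sigma^{(t)}_{K,N}$ — precisely the algebraic reason $K^*=\tfrac{N-1}{N}K$ is the ``right'' loss — after which monotonicity of $\sigma$ in $K$ propagates the estimate from $N'=N$ to all $N'\ge N$.
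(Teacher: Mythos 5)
Your proof is correct and follows essentially the same route as the paper: for (i) the pointwise inequality $\tau^{(t)}_{K,N'}\geq\sigma^{(t)}_{K,N'}$ obtained from Lemma~\ref{sigma} with the factorization $\tau^{(t)}_{K,N'}(\theta)^{N'}=\sigma^{(t)}_{0,1}(\theta)\,\sigma^{(t)}_{K,N'-1}(\theta)^{N'-1}$, and for (ii) the scaling identity $\sigma^{(t)}_{K(N'-1)/N',\,N'-1}=\sigma^{(t)}_{K,N'}$ together with $\sigma^{(t)}_{K,N'}\geq t$ and monotonicity of $\tau$ in $K$ — exactly the content of (\ref{leq}) in the paper (where, as you implicitly correct, the first equality should be read as $\leq$). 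You are slightly more explicit about isolating the algebraic reason for the constant $K^*=\tfrac{N-1}{N}K$ and about the low-dimensional conventions, but the argument is the same.
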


\begin{proof}
\begin{itemize}
\item[(i)] Due to Lemma \ref{sigma} we have for all $K',N'\in\R$ with $N'\geq 1$ and all $(t,\theta)\in[0,1]\times \R_+$,
$$\tau^{(t)}_{K',N'}(\theta)^{N'}=t\cdot\sigma^{(t)}_{K',N'-1}(\theta)^{N'-1}
=\sigma^{(t)}_{0,1}(\theta)\cdot\sigma^{(t)}_{K',N'-1}(\theta)^{N'-1}\geq\sigma^{(t)}_{K',N'}(\theta)^{N'}$$
which means
$$\tau^{(t)}_{K',N'}(\theta)\geq\sigma^{(t)}_{K',N'}(\theta).$$
Now we consider two probability measures $\nu_0,\nu_1\in\mathcal{P}_\infty(\mathsf{M},\mathsf{d},\mathsf{m})$. Due to $\mathsf{CD}(K,N)$ there exist an optimal
coupling $\mathsf{q}$ of $\nu_0=\rho_0\mathsf{m}$ and $\nu_1=\rho_1\mathsf{m}$ and a geodesic
$\Gamma:[0,1]\rightarrow\mathcal{P}_\infty(\mathsf{M},\mathsf{d},\mathsf{m})$ connecting $\nu_0$ and $\nu_1$
such that
\begin{align*}
\mathsf{S}_{N'}(\Gamma(t)|\mathsf{m})&\leq-\int_{\mathsf{M}\times
\mathsf{M}}\left[\tau^{(1-t)}_{K,N'}(\mathsf{d}(x_0,x_1))\rho^{-1/N'}_0(x_0)+
\tau^{(t)}_{K,N'}(\mathsf{d}(x_0,x_1))\rho^{-1/N'}_1(x_1)\right]d\mathsf{q}(x_0,x_1)\\
&\leq-\int_{\mathsf{M}\times
\mathsf{M}}\left[\sigma^{(1-t)}_{K,N'}(\mathsf{d}(x_0,x_1))\rho^{-1/N'}_0(x_0)+
\sigma^{(t)}_{K,N'}(\mathsf{d}(x_0,x_1))\rho^{-1/N'}_1(x_1)\right]d\mathsf{q}(x_0,x_1)
\end{align*}
for all $t\in [0,1]$ and all $N'\geq N$.

\item[(ii)] Put $K^*:=\tfrac{K(N-1)}{N}$ and note that $K^*\leq\tfrac{K(N'-1)}{N'}$ for all $N'\geq N$. Comparing the relevant coefficients $\tau^{(t)}_{K^*,N'}(\theta)$ and $\sigma^{(t)}_{K,N'}(\theta)$, yields
\begin{equation} \label{leq}
\begin{split}
\tau^{(t)}_{K^*,N'}(\theta)=
\tau^{(t)}_{\tfrac{K(N'-1)}{N'},N'}(\theta)
=t^{1/N'}\left(\frac{\sin(t\theta\sqrt{K/N'})}{\sin(\theta\sqrt{K/N'})}\right)^{1-1/N'}
\leq\sigma^{(t)}_{K,N'}(\theta)
\end{split}
\end{equation}
for all $\theta\in\R_+$, $t\in [0,1]$ and $N'\geq N$.

According to our curvature assumption, for every $\nu_0,\nu_1\in\mathcal{P}_\infty(\mathsf{M},\mathsf{d},\mathsf{m})$  there exist an optimal coupling $\mathsf{q}$ of $\nu_0=\rho_0\mathsf{m}$ and $\nu_1=\rho_1\mathsf{m}$ and a geodesic $\Gamma:[0,1]\rightarrow\mathcal{P}_\infty(\mathsf{M},\mathsf{d},\mathsf{m})$ from $\nu_0$ to $\nu_1$ with property (\ref{ren}). From (\ref{leq}) we deduce
\begin{align*}
\mathsf{S}_{N'}(\Gamma(t)|\mathsf{m})&\leq-\int_{\mathsf{M}\times
\mathsf{M}}\left[\sigma^{(1-t)}_{K,N'}(\mathsf{d}(x_0,x_1))\rho^{-1/N'}_0(x_0)+
\sigma^{(t)}_{K,N'}(\mathsf{d}(x_0,x_1))\rho^{-1/N'}_1(x_1)\right]d\mathsf{q}(x_0,x_1)\\
&\leq-\int_{\mathsf{M}\times
\mathsf{M}}\left[\tau^{(1-t)}_{K^*,N'}(\mathsf{d}(x_0,x_1))\rho^{-1/N'}_0(x_0)+
\tau^{(t)}_{K^*,N'}(\mathsf{d}(x_0,x_1))\rho^{-1/N'}_1(x_1)\right]d\mathsf{q}(x_0,x_1)
\end{align*}
for all $t\in [0,1]$ and all $N'\geq N$. This proves property $\mathsf{CD}(K^*,N)$.
\end{itemize}
\end{proof}

A crucial property on non-branching spaces is that a mutually singular decomposition of terminal measures leads to mutually singular decompositions of $t$-midpoints. This fact was already repeatedly used in \cite{sb, lv}.  Following the advice of the referee, we include a complete proof for the readers convenience.

\begin{lemma}\label{midmutsing}
Let $(\M,\mathsf{d},\m)$ be a non-branching geodesic metric measure space. Let $\nu_0,\nu_1\in\mathcal{P}_\infty(\mathsf{M},\mathsf{d},\mathsf{m})$ and let $\nu_t$ be a $t$-midpoint of $\nu_0$ and $\nu_1$ with $t\in[0,1]$. Assume that for $n\in\mathbb{N}$ or $n=\infty$ $$\nu_i=\sum^n_{k=1}\alpha_k\nu^k_i$$
for $i=0,t,1$ and suitable $\alpha_k>0$ where $\nu^k_i$ are probability measures such that
    $\nu^k_t$ is a $t$-midpoint of $\nu^k_0$ and $\nu^k_1$ for every $k$. If the family $\left(\nu^k_0\right)_{k=1,\dots,n}$ is mutually singular, then $\left(\nu^k_t\right)_{k=1,\dots,n}$ is mutually singular as well.
\end{lemma}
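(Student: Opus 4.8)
The plan is to exploit the non-branching property to transfer mutual singularity from the endpoints to the midpoints. First I would reduce to the case $n=2$ (or a finite $n$), since mutual singularity of a family is a pairwise condition, and the case of general countable $n$ follows by taking pairwise intersections. So suppose $\nu_0 = \alpha_1\nu_0^1 + \alpha_2\nu_0^2$ with $\nu_0^1 \perp \nu_0^2$, and correspondingly for $\nu_t$ and $\nu_1$, with $\nu_t^k$ a $t$-midpoint of $\nu_0^k$ and $\nu_1^k$.

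The key object is an optimal dynamical coupling: choose a probability measure $\Pi$ on the geodesic space $\mathcal{G}(\M)$ such that $(e_0)_\#\Pi = \nu_0$, $(e_1)_\#\Pi = \nu_1$, $(e_t)_\#\Pi = \nu_t$, and $\Pi$ is concentrated on geodesics along which $(e_0, e_1)$ is an optimal coupling (here $e_s(\gamma) = \gamma(s)$). The point is that the decompositions of $\nu_0, \nu_t, \nu_1$ should be realizable through a single such $\Pi$; more precisely I would first argue one can disintegrate $\Pi = \alpha_1\Pi^1 + \alpha_2\Pi^2$ so that $(e_i)_\#\Pi^k = \nu_i^k$ for $i=0,t,1$. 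This matching requires a small argument: knowing $\nu_t^k$ is a $t$-midpoint of $\nu_0^k$ and $\nu_1^k$, one wants these to be the ``pieces'' seen by $\Pi$. The clean way is to define $\Pi^1$ by restricting $\Pi$ to the set of geodesics $\gamma$ with $\gamma(0)$ in the support-set $A_1$ carrying $\nu_0^1$ (where $A_1 \cap A_2 = \emptyset$, $\nu_0^k(A_k)=1$), and check that $\Pi^1$ then has the right marginals — the $t$ and $1$ marginals come out right precisely because in a non-branching space the map $\gamma \mapsto (\gamma(0),\gamma(t))$ restricted to a set of optimal geodesics is essentially injective, so the mass originating in $A_1$ cannot later merge with mass from $A_2$.

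The heart of the matter is then the non-branching argument at time $t$: I claim $\nu_t^1 \perp \nu_t^2$. Suppose not; then there is a set $B$ with $\nu_t^1(B) > 0$ and $\nu_t^2(B) > 0$, i.e.\ $\Pi^1$ and $\Pi^2$ both give positive mass to the set of geodesics passing through $B$ at time $t$. Pick a point $z \in B$ (more carefully, a positive-measure set of such $z$) that is the value $\gamma(t) = \gamma'(t)$ for geodesics $\gamma$ coming from $\Pi^1$ and $\gamma'$ from $\Pi^2$. Restricting further, $z$ is a $t$-midpoint of $\gamma(0)$ and $\gamma(1)$, and also of $\gamma'(0)$ and $\gamma'(1)$. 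Rescaling so that $z$ is a genuine midpoint (taking the sub-geodesic on $[0, 2t]$ if $t \le 1/2$, or a symmetric variant), the non-branching hypothesis forces $\gamma(0) = \gamma'(0)$ on a positive-measure set, contradicting $\gamma(0) \in A_1$, $\gamma'(0) \in A_2$, $A_1 \cap A_2 = \emptyset$. Making the ``positive-measure set of $z$'' step rigorous is the main obstacle: one must pass from the mere fact that $\nu_t^1$ and $\nu_t^2$ are not mutually singular to a genuine branching configuration, which requires a disintegration of $\Pi^1$ and $\Pi^2$ over the time-$t$ evaluation map and an application of non-branching $\Pi^1\otimes\Pi^2$-almost everywhere on the overlap. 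The technical tool here is that on a non-branching space an optimal dynamical coupling is uniquely determined by its endpoint-and-interior-point data, which is exactly the ``no merging'' phenomenon I would isolate as a sub-lemma. Once that contradiction is in place, pairwise mutual singularity of $(\nu_t^k)_k$ is established, completing the proof.
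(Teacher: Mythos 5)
Your overall plan --- use the non-branching hypothesis to transfer mutual singularity from time $0$ to time $t$ via a coupling --- is the right one, and your optimal dynamical coupling on $\mathcal{G}(\M)$ is essentially equivalent to the paper's coupling on $\M^3$ with a built-in geodesy constraint. But the heart of your argument has a genuine gap: you assert that once $\gamma(t)=\gamma'(t)=z$ on a positive-measure set, the non-branching hypothesis forces $\gamma(0)=\gamma'(0)$. This is false without further input. Two geodesics in a perfectly non-branching space can cross transversally at a single interior time (two lines meeting at a point in $\R^2$). In the paper's formulation, non-branching requires a tuple $(z,x_0,x_1,x_2)$ in which $z$ is a midpoint of $x_0,x_1$ \emph{and} of $x_0,x_2$ with the \emph{same} $x_0$; your rescaling exhibits $z$ as a midpoint of $\gamma(0),\gamma(2t)$ and of $\gamma'(0),\gamma'(2t)$, but $\gamma(2t)$ and $\gamma'(2t)$ need not coincide, so non-branching gives nothing. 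What closes this gap in the paper is the $\mathsf{d}^2$-cyclic monotonicity of the endpoint coupling: setting $x_i=\gamma(0),y_i=\gamma(1),x_j=\gamma'(0),y_j=\gamma'(1)$, cyclic monotonicity gives $\mathsf{d}^2(x_i,y_i)+\mathsf{d}^2(x_j,y_j)\le\mathsf{d}^2(x_i,y_j)+\mathsf{d}^2(x_j,y_i)$; chaining with the triangle inequality through $z$ and the elementary bound $2ab\le a^2+b^2$ closes the cycle, so every inequality must be an equality. Equality --- not the mere crossing --- places $z$ on a geodesic from $x_j$ to $y_i$ as well as on the two original geodesics, and only then does a genuine branching configuration emerge. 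You mention that the coupling is optimal but never actually use optimality; that is exactly the missing step.

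A secondary issue is your decomposition: you start from an arbitrary optimal $\Pi$, restrict to the starting sets $A_k$, and hope the restriction's $t$- and $1$-marginals reproduce the prescribed $\nu^k_t,\nu^k_1$. There is no reason this should hold for a generic $\Pi$; the decomposition of $\nu_t$ and $\nu_1$ into pieces is part of the \emph{hypothesis}, not something every optimal coupling respects. The paper works bottom-up: for each $k$ it takes a coupling $\mathsf{q}^k$ on $\M^3$ with marginals $\nu^k_0,\nu^k_t,\nu^k_1$ concentrated on colinear triples (such a $\mathsf{q}^k$ exists since $\nu^k_t$ is a $t$-midpoint of $\nu^k_0,\nu^k_1$), forms $\mathsf{q}:=\sum_k\alpha_k\mathsf{q}^k$, and invokes a lemma of Sturm to conclude that the projection of $\mathsf{q}$ onto the first and third factors is an optimal coupling of $\nu_0,\nu_1$. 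Assembling the coupling from the given pieces guarantees by construction that the marginals match and simultaneously entitles you to cyclic monotonicity for the assembled object; decomposing a pre-chosen $\Pi$ gives you neither.
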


\begin{proof}
We set $t_1=0$, $t_2=t$ and $t_3=1$. For $k=1,\dots,n$ we consider probability measures $\mathsf{q}^k$ on $\M^3$ with the following properties:
\begin{itemize}
\item[$\ast$]the projection on the $i$-th factor is $\nu^k_{t_i}$ for $i=1,2,3$
\item[$\ast$]for $\mathsf{q}^k$-almost every $(x_1,x_2,x_3)\in\M^3$ and every $i,j=1,2,3$
$$\mathsf{d}(x_i,x_j)=|t_i-t_j|\mathsf{d}(x_1,x_3).$$
\end{itemize}
We define $\mathsf{q}:=\sum^n_{k=1}\alpha_k\mathsf{q}^k$. Then the projection of $\mathsf{q}$ on the first and the third factor is an optimal coupling of $\nu_0$ and $\nu_1$ due to \cite[Lemma 2.11(ii)]{sb}. Assume that there exist $i,j\in\{1,\dots,n\}$ with $i\not =j$ and $(x_i,z,y_i),(x_j,z,y_j)\in\M^3$ such that
$$z\in\mathsf{supp}(\nu^i_t)\cap\mathsf{supp}(\nu^j_t)$$
and
$$\underset{\in\mathsf{supp}(\mathsf{q}^i)}{\underbrace{(x_i,z,y_i)}},\underset{\in\mathsf{supp}(\mathsf{q}^j)}{\underbrace{(x_j,z,y_j)}}\in\mathsf{supp}(\mathsf{q}).$$
Hence, $x_i\not =x_j$. Since every optimal coupling is $\mathsf{d}^2$-cyclically monotone according to \cite[Theorem 5.10]{vib}, we have
\begin{align*}
\mathsf{d}^2&(x_i,y_i)+\mathsf{d}^2(x_j,y_j)\\
&\leq\mathsf{d}^2(x_i,y_j)+\mathsf{d}^2(x_j,y_i)\\
&\leq\left[\mathsf{d}(x_i,z)+\mathsf{d}(z,y_j)\right]^2+\left[\mathsf{d}(x_j,z)+\mathsf{d}(z,y_i)\right]^2\\
&=\mathsf{d}^2(x_i,z)+\mathsf{d}^2(z,y_j)+2\mathsf{d}(x_i,z)\mathsf{d}(z,y_j)\\
&\hspace{2.5cm}+\mathsf{d}^2(x_j,z)+\mathsf{d}^2(z,y_i)+2\mathsf{d}(x_j,z)\mathsf{d}(z,y_i)\\
&=\left(t^2+(1-t)^2\right)\left[\mathsf{d}^2(x_i,y_i)+\mathsf{d}^2(x_j,y_j)\right]\\
&\hspace{2.5cm}+4t(1-t)\mathsf{d}(x_i,y_i)\mathsf{d}(x_j,y_j)\\
&\leq\left(t^2+(1-t)^2+2t(1-t)\right)\left[\mathsf{d}^2(x_i,y_i)+\mathsf{d}^2(x_j,y_j)\right]\\
&=\mathsf{d}^2(x_i,y_i)+\mathsf{d}^2(x_j,y_j).
\end{align*}
Thus, all inequalities have to be equalities. In particular,
$$\mathsf{d}(x_j,y_i)=\mathsf{d}(x_j,z)+\mathsf{d}(z,y_i),$$
meaning that $z$ is an $s$-midpoint of $x_j$ and $y_i$ for an appropriately choosen $s\in[0,1]$. Hence, there exists a tupel $(z,a_0,a_1,a_2)\in\M^4$ -- $a_1$ lying on the geodesic connecting $x_i$ and $z$, $a_2$ on the one conneting $x_j$ and $z$, $a_0$ on the one from $z$ to $y_i$ -- such that $z$ is a midpoint of $a_0$ and $a_1$ as well as of $a_0$ and $a_2$. This contradicts our assumption of non-branching metric measure spaces.
\end{proof}

We summarize two properties of the reduced curvature-dimension condition $\mathsf{CD}^*(K,N)$. The analogous results for metric measure spaces $(\mathsf{M},\mathsf{d},\mathsf{m})$ satisfying the ``original'' curvature-dimension condition $\mathsf{CD}(K,N)$ of Lott, Villani and Sturm are formulated and proved in \cite{sb}.

The first result states the uniqueness of geodesics:

\begin{proposition}[Geodesics] \label{uni}
Let $(\mathsf{M},\mathsf{d},\mathsf{m})$ be a non-branching metric measure
space satisfying the condition $\mathsf{CD}^*(K,N)$ for some numbers $K,N\in\R$.
Then for every $x\in \mathsf{supp}(\mathsf{m})\subseteq \mathsf{M}$ and $\mathsf{m}$-almost every $y\in\mathsf{M}$ - with exceptional set depending
on $x$ - there exists a unique geodesic between $x$ and $y$.

Moreover, there exists a measurable map $\gamma:\mathsf{M}\times\mathsf{M}\rightarrow\mathcal{G}(\mathsf{M})$ such that for $\mathsf{m}\otimes\mathsf{m}$-almost every $(x,y)\in\mathsf{M}\times
\mathsf{M}$ the curve $t\longmapsto\gamma_t(x,y)$ is the unique geodesic connecting $x$ and
$y$.
\end{proposition}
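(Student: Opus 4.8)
The plan is to follow the strategy used for the analogous statement under $\mathsf{CD}(K,N)$ in \cite{sb}, adapting it to the reduced condition. The key geometric input is the non-branching property together with the fact, recalled in Lemma \ref{midmutsing}, that mutually singular decompositions of the endpoints pass to the $t$-midpoints. First I would reduce to a quantitative statement: fix $x\in\mathsf{supp}(\mathsf{m})$ and take $\nu_0:=\delta_x$ and $\nu_1:=\frac1{\mathsf{m}(B)}\mathsf{m}|_B$ for a bounded set $B$ of positive finite measure; more carefully, to stay inside $\mathcal P_\infty(\M,\mathsf{d},\m)$ one approximates $\delta_x$ by normalized restrictions $\mathsf{m}|_{B_\varepsilon(x)}/\mathsf{m}(B_\varepsilon(x))$ and lets $\varepsilon\to0$ at the end, or works directly with $\mathcal P_2$ via Remark \ref{remark}(i)/Lemma \ref{boundedsupport}. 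Suppose for contradiction that on a set of $y$ of positive $\mathsf{m}$-measure there are two distinct geodesics from $x$ to $y$.

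Next I would exploit this to build two distinct optimal dynamical transference plans from $\nu_0$ to $\nu_1$, i.e. split $B$ (up to a further positive-measure subset) into pieces carried by genuinely different geodesic selections, so that one obtains a nontrivial convex decomposition $\nu_1=\alpha\nu_1^1+(1-\alpha)\nu_1^2$ whose induced midpoints $\nu_{1/2}^1,\nu_{1/2}^2$ fail to be mutually singular even though $\nu_1^1\perp\nu_1^2$. The point is that two distinct geodesics from the same point $x$ to points in disjoint sets can, because of branching-type behaviour of the selection, be steered through overlapping midpoint regions; in a non-branching space Lemma \ref{midmutsing} forbids exactly this, once one knows the midpoint of $\nu_0$ and $\nu_1$ is unique along $\mathsf{CD}^*$-geodesics. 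To get that uniqueness one uses $\mathsf{CD}^*(K,N)$: the Rényi entropy $\mathsf S_{N'}(\Gamma(t)|\m)$ is bounded above along the optimal geodesic by the right-hand side of \eqref{ren}, which for $\nu_0$ essentially a point mass degenerates in a controlled way (the $\sigma^{(1-t)}_{K,N'}$-term involving $\rho_0^{-1/N'}$ is handled by the approximation), forcing $\Gamma(t)$ to be $\m$-absolutely continuous with a density one can estimate; two distinct geodesics would give, via the midpoint-splitting argument and Jensen/strict convexity of $r\mapsto r^{1-1/N'}$, a strict improvement of the entropy bound that is incompatible with $\Gamma(t)$ being a genuine $\mathsf{d}_{\mathsf W}$-geodesic. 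This is the heart of the argument and mirrors \cite[Prop.\ 4.1 ff.]{sb}; I would import from there the measurable-selection machinery for geodesics $\gamma:\M\times\M\to\mathcal G(\M)$ essentially verbatim, since it depends only on non-branching and the existence of optimal plans, not on the precise distortion coefficients.

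Having established pointwise/a.e.\ uniqueness, the measurable map $\gamma:\M\times\M\to\mathcal G(\M)$ is produced by a standard selection theorem: the set $\{(x,y,\gamma)\in\M\times\M\times\mathcal G(\M): \gamma_0=x,\ \gamma_1=y\}$ is closed, $\mathcal G(\M)$ is a complete separable (Polish) space, so by the Kuratowski--Ryll-Nardzewski selection theorem there is a measurable selection; on the full-measure set where the geodesic is unique this selection automatically coincides with it. Fubini/Tonelli then upgrades ``for every $x$, $\m$-a.e.\ $y$'' to ``$\m\otimes\m$-a.e.\ $(x,y)$''.

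The main obstacle I anticipate is the degeneracy of \eqref{ren} when $\nu_0$ is (an approximation of) a Dirac mass: the coefficient $\sigma^{(1-t)}_{K,N'}(\mathsf d(x_0,x_1))$ multiplies $\rho_0^{-1/N'}$, which blows up as the support of $\nu_0$ shrinks, so one must be careful that the right-hand side still yields a meaningful, non-vacuous bound on $\mathsf S_{N'}(\Gamma(t)|\m)$ after renormalization — this is where uniformity of the estimates in the approximation parameter $\varepsilon$ matters, and where the argument is genuinely more delicate than a naive reading suggests. The second, more technical, obstacle is the measurable-selection bookkeeping needed to turn ``two geodesics on a positive-measure set'' into two honestly distinct optimal plans with mutually singular endpoint decompositions; this requires a measurable choice of two different geodesic branches, which again is available by Polishness of $\mathcal G(\M)$ but must be set up carefully so that Lemma \ref{midmutsing} can be applied.
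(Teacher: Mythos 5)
The paper itself gives no proof of this proposition; it simply defers to \cite{sb} (Sturm, ``On the geometry of metric measure spaces II'', Proposition~4.1), remarking that the argument carries over to $\mathsf{CD}^*(K,N)$ once $\tau^{(t)}_{K,N}$ is replaced by $\sigma^{(t)}_{K,N}$. Your proposal is therefore a reconstruction of that cited argument, and several of the ingredients you list are indeed the right ones: approximating $\delta_x$ by normalized restrictions of $\mathsf{m}$ to shrinking balls, the pointwise density estimate (Proposition~\ref{nb}(iii)) to control the intermediate marginals as the approximation degenerates, the non-branching hypothesis, and a Kuratowski--Ryll-Nardzewski selection for the measurable map $\gamma$. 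The obstacle you flag about the $\sigma^{(1-t)}_{K,N'}\rho_0^{-1/N'}$ term degenerating as $\nu_0$ shrinks is also a real issue and is handled in \cite{sb} exactly via the pointwise bound, which as the support of $\nu_0$ shrinks still gives a one-sided density estimate $\rho_t \leq \sigma^{(1-t)}_{K,N}(\cdot)^{-N}\rho_1$ on the midpoints.

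However, the core contradiction you sketch does not hold together. You propose to split $\nu_1$ into mutually singular pieces $\nu_1^1\perp\nu_1^2$ carried by ``genuinely different geodesic selections'' and then to claim the induced midpoints $\nu_{1/2}^1,\nu_{1/2}^2$ ``fail to be mutually singular,'' contradicting Lemma~\ref{midmutsing}. But there is no reason this failure should occur: if you transport $\nu_0$ to the disjoint targets $Y_1$ and $Y_2$ along \emph{any} measurable geodesic selections, the resulting midpoint measures live over geodesics ending in $Y_1$ and $Y_2$ respectively, and nothing about having chosen ``the first'' versus ``the second'' branch at a bad point $y$ forces their time-$1/2$ slices to overlap. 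So you would not obtain the hypothesis needed to contradict the lemma. The actual mechanism of non-uniqueness to be ruled out is \emph{two distinct dynamical optimal plans from $\nu_0$ to the same $\nu_1$ with the same endpoint coupling}, i.e.\ two different geodesic selections over the \emph{same} pairs $(x,y)$; and the tool that bites there is not the mutual-singularity conclusion of Lemma~\ref{midmutsing} as such but the $\mathsf{d}^2$-cyclical-monotonicity argument inside its proof, which shows that on the support of \emph{any} dynamical optimal plan the position $\gamma_t$ at an interior time determines the endpoints, together with the non-branching property to then force $\gamma$ itself to be determined. Your invocation of ``Jensen/strict convexity of $r\mapsto r^{1-1/N'}$ giving a strict improvement of the entropy bound'' is likewise not the argument: the $\mathsf{CD}^*$ inequality is a one-sided upper bound on $\mathsf{S}_{N'}$, so a strict improvement of it is perfectly consistent and yields no contradiction. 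In short, your high-level toolkit is right and the selection/Fubini part is fine, but the step from ``positive-measure set of $y$ with two geodesics'' to a contradiction with Lemma~\ref{midmutsing} is set up on the wrong decomposition and the entropy-convexity step does not produce the incompatibility you claim; both need to be replaced by the cyclical-monotonicity plus non-branching argument that underlies \cite[Lemma~2.11, Proposition~4.1]{sb}.
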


The second one provides equivalent characterizations of the curvature-dimension condition $\mathsf{CD}^*(K,N)$:

\begin{proposition}[Equivalent characterizations] \label{nb}
For each proper non-branching metric measure space $(\mathsf{M},\mathsf{d},\mathsf{m})$, the following statements are equivalent:
\begin{itemize}
\item[(i)]$(\mathsf{M},\mathsf{d},\mathsf{m})$ satisfies $\mathsf{CD}^*(K,N)$.
\item[(ii)]For all $\nu_0,\nu_1\in\mathcal{P}_\infty(\mathsf{M},\mathsf{d},\mathsf{m})$ there exists a geodesic $\Gamma:[0,1]\rightarrow\mathcal{P}_\infty(\mathsf{M},\mathsf{d},\mathsf{m})$
connecting $\nu_0$ and $\nu_1$ such that for all $t\in [0,1]$ and all $N'\geq N$,
\begin{equation} \label{convt}
\mathsf{S}_{N'}(\Gamma(t)|\mathsf{m})\leq\sigma^{(1-t)}_{K,N'}(\theta)\mathsf{S}_{N'}(\nu_0|\mathsf{m})+
\sigma^{(t)}_{K,N'}(\theta)\mathsf{S}_{N'}(\nu_1|\mathsf{m}),
\end{equation}
where
\begin{equation} \label{theta}
\theta:=
\begin{cases}
\inf_{x_0\in\mathcal{S}_0,x_1\in\mathcal{S}_1}\mathsf{d}(x_0,x_1),& \text{if $K\geq 0$},\\
\sup_{x_0\in\mathcal{S}_0,x_1\in\mathcal{S}_1}\mathsf{d}(x_0,x_1),& \text{if $K<0$},
\end{cases}
\end{equation}
denoting by $\mathcal{S}_0$ and $\mathcal{S}_1$ the supports of $\nu_0$ and $\nu_1$, respectively.
\item[(iii)]For all $\nu_0,\nu_1\in\mathcal{P}_\infty(\mathsf{M},\mathsf{d},\mathsf{m})$ there exists an optimal
coupling $\mathsf{q}$ of $\nu_0=\rho_0\mathsf{m}$ and $\nu_1=\rho_1\mathsf{m}$ such that
\begin{equation} \label{pointwise}
\rho^{-1/N}_t(\gamma_t(x_0,x_1))\geq\sigma^{(1-t)}_{K,N}(\mathsf{d}(x_0,x_1))\rho^{-1/N}_0(x_0)+
\sigma^{(t)}_{K,N}(\mathsf{d}(x_0,x_1))\rho^{-1/N}_1(x_1)
\end{equation}
for all $t\in [0,1]$ and $\mathsf{q}$-almost every $(x_0,x_1)\in\mathsf{M}\times\mathsf{M}$. Here for all $t\in
[0,1]$, $\rho_t$ denotes the density with respect to $\mathsf{m}$ of the push-forward measure of $\mathsf{q}$ under the
map $(x_0,x_1)\longmapsto\gamma_t(x_0,x_1)$.
\end{itemize}
\end{proposition}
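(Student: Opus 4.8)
The plan is to establish the cycle $\mathrm{(iii)}\Rightarrow\mathrm{(i)}\Rightarrow\mathrm{(ii)}\Rightarrow\mathrm{(iii)}$, with essentially all the work hidden in the last implication. For $\mathrm{(iii)}\Rightarrow\mathrm{(i)}$ the first task is to upgrade the pointwise bound \eqref{pointwise}, stated only for the exponent $N$, to every $N'\geq N$. Raising \eqref{pointwise} to the power $p:=N/N'\leq 1$ gives $\rho_t^{-1/N'}(\gamma_t(x_0,x_1))$ on the left; on the right, writing $\sigma^{(1-t)}_{K,N}(\mathsf{d})\rho_0^{-1/N}=(1-t)\cdot\frac{\sigma^{(1-t)}_{K,N}(\mathsf{d})\rho_0^{-1/N}}{1-t}$ and likewise for the $t$-summand, concavity of $u\mapsto u^{p}$ yields the lower bound $(1-t)^{1-p}\sigma^{(1-t)}_{K,N}(\mathsf{d})^{p}\rho_0^{-1/N'}+t^{1-p}\sigma^{(t)}_{K,N}(\mathsf{d})^{p}\rho_1^{-1/N'}$, and Lemma \ref{sigma} with second pair $(0,N'-N)$ (using $\sigma^{(s)}_{0,N'-N}\equiv s$) gives $s^{1-p}\sigma^{(s)}_{K,N}(\theta)^{p}\geq\sigma^{(s)}_{K,N'}(\theta)$ for $s\in\{t,1-t\}$; hence \eqref{pointwise} holds verbatim with $N$ replaced by $N'$. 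One then checks that $\Gamma(t):=(\gamma_t)_\ast\mathsf{q}$ is a geodesic in $\mathcal{P}_\infty(\mathsf{M},\mathsf{d},\mathsf{m})$ and, by the change-of-variables formula (legitimate since $\rho_t>0$ holds $\Gamma(t)$-a.e.), $\int_\mathsf{M}\rho_t^{1-1/N'}\,d\mathsf{m}=\int_{\mathsf{M}\times\mathsf{M}}\rho_t^{-1/N'}(\gamma_t(x_0,x_1))\,d\mathsf{q}$; integrating the $N'$-version of \eqref{pointwise} against $\mathsf{q}$ and recalling $\int\rho_i^{-1/N'}(x_i)\,d\mathsf{q}=\int\rho_i^{1-1/N'}\,d\mathsf{m}$ produces \eqref{ren}. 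The implication $\mathrm{(i)}\Rightarrow\mathrm{(ii)}$ is then immediate from the monotonicity of $\theta\mapsto\sigma^{(t)}_{K,N'}(\theta)$ (non-decreasing if $K\geq 0$, non-increasing if $K<0$; see \cite{sb}): with $\theta$ as in \eqref{theta} one gets $\sigma^{(s)}_{K,N'}(\mathsf{d}(x_0,x_1))\geq\sigma^{(s)}_{K,N'}(\theta)$ for $\mathsf{q}$-a.e.\ $(x_0,x_1)$, and inserting this into \eqref{ren} together with $\int\rho_i^{-1/N'}(x_i)\,d\mathsf{q}=-\mathsf{S}_{N'}(\nu_i|\mathsf{m})$ gives \eqref{convt}.

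For the main implication $\mathrm{(ii)}\Rightarrow\mathrm{(iii)}$, fix $\nu_0,\nu_1\in\mathcal{P}_\infty(\mathsf{M},\mathsf{d},\mathsf{m})$. Since $\mathsf{M}$ is proper and non-branching and $\nu_0\ll\mathsf{m}$, the optimal coupling $\mathsf{q}$ of $\nu_0,\nu_1$ is unique and induced by a map $T$, and the $\mathsf{L}_2$-Wasserstein geodesic joining $\nu_0$ and $\nu_1$ is unique and equals $\Gamma(t)=(\gamma_t(\cdot,T(\cdot)))_\ast\nu_0=\rho_t\mathsf{m}$, with $(x,y)\mapsto\gamma(x,y)$ the measurable geodesic selection of Proposition \ref{uni} (established for $\mathsf{CD}^\ast(K,N)$ exactly as in \cite{sb}). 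Choose a sequence of nested finite partitions of $\mathsf{supp}(\nu_0)$ into disjoint Borel sets $A$ with $\mathsf{diam}(A)\to 0$ and $\mathsf{diam}(T(A))\to 0$ (intersect fine partitions of $\mathsf{supp}(\nu_0)$ and of $\mathsf{supp}(\nu_1)$). For such an $A$ put $\alpha:=\nu_0(A)$, $\nu^A_0:=\tfrac1\alpha\nu_0|_A$, $\nu^A_1:=T_\ast\nu^A_0$; both lie in $\mathcal{P}_\infty(\mathsf{M},\mathsf{d},\mathsf{m})$, the $(\nu^A_0)_A$ are mutually singular, and the restricted curve $\Gamma^A(t):=(\gamma_t(\cdot,T(\cdot)))_\ast\nu^A_0=\rho^A_t\mathsf{m}$ is a Wasserstein geodesic from $\nu^A_0$ to $\nu^A_1$ with $\nu^A_t:=\Gamma^A(t)$ a $t$-midpoint of $\nu^A_0$ and $\nu^A_1$ and $\sum_A\alpha\,\nu^A_t=\Gamma(t)$. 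By Lemma \ref{midmutsing} the $(\nu^A_t)_A$ are then mutually singular, so $\rho_t=\alpha\,\rho^A_t$ holds $\nu^A_t$-a.e. By the uniqueness of Wasserstein geodesics, $\Gamma^A$ is the geodesic for which \eqref{convt} holds for the pair $(\nu^A_0,\nu^A_1)$; rewriting each of the three Rényi entropies by the change of variables used above and cancelling the common factor $\alpha^{1/N'-1}$ turns \eqref{convt} for $(\nu^A_0,\nu^A_1)$ into
$$\int_{A\times\mathsf{M}}\rho_t^{-1/N'}(\gamma_t(x_0,x_1))\,d\mathsf{q}\;\geq\;\sigma^{(1-t)}_{K,N'}(\theta_A)\int_{A\times\mathsf{M}}\rho_0^{-1/N'}(x_0)\,d\mathsf{q}+\sigma^{(t)}_{K,N'}(\theta_A)\int_{A\times\mathsf{M}}\rho_1^{-1/N'}(x_1)\,d\mathsf{q},$$
with $\theta_A$ the infimum (if $K\geq 0$) or the supremum (if $K<0$) of $\mathsf{d}$ over $A\times T(A)$, so that $\theta_A\to\mathsf{d}(x_0,x_1)$ along the refinement.

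Since $\mathsf{q}$ is carried by the graph of $T$, these inequalities say precisely that the conditional expectation, with respect to the $\sigma$-algebra generated by the partition, of $\rho_t^{-1/N'}(\gamma_t)-\sigma^{(1-t)}_{K,N'}(\theta_A)\rho_0^{-1/N'}-\sigma^{(t)}_{K,N'}(\theta_A)\rho_1^{-1/N'}$ is nonnegative; letting the partitions refine to points and invoking martingale convergence together with $\sigma^{(s)}_{K,N'}(\theta_A)\to\sigma^{(s)}_{K,N'}(\mathsf{d})$ in $L^{1}(\mathsf{q})$ yields \eqref{pointwise} with exponent $N'$ for $\mathsf{q}$-a.e.\ $(x_0,x_1)$, in particular for $N'=N$, which is $\mathrm{(iii)}$. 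The hard part is this implication, and within it two non-routine ingredients: the uniqueness of Wasserstein geodesics between absolutely continuous measures on a proper non-branching space — without it the restricted curves $\Gamma^A$ need not be the geodesics \eqref{convt} refers to — and the passage from the cell-averaged inequalities to the genuinely pointwise statement, for which both Lemma \ref{midmutsing} (to identify $\rho_t$ with $\alpha\rho^A_t$ on the relevant support) and the smallness of $T(A)$ (to make $\theta_A$ approximate $\mathsf{d}$) are indispensable. A minor technical point, handled as in \cite{sb}, is the treatment of the degenerate coefficients $\sigma^{(s)}_{K,N'}(\cdot)=\infty$, which the inequalities at hand force to be finite on $\mathsf{supp}(\mathsf{q})$.
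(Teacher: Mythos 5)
Your plan establishes the cycle $\mathrm{(iii)}\Rightarrow\mathrm{(i)}\Rightarrow\mathrm{(ii)}\Rightarrow\mathrm{(iii)}$, while the paper instead proves $\mathrm{(i)}\Rightarrow\mathrm{(ii)}$ and $\mathrm{(ii)}\Rightarrow\mathrm{(i)}$ by a direct partition argument, and for $\mathrm{(i)}\Leftrightarrow\mathrm{(iii)}$ it simply refers to \cite[Proposition 4.2]{sb} with $\tau$ replaced by $\sigma$. Your $\mathrm{(iii)}\Rightarrow\mathrm{(i)}$ step is a nice, self-contained argument: the upgrade from exponent $N$ to $N'$ via concavity of $u\mapsto u^{N/N'}$ combined with Lemma~\ref{sigma} applied to the pair $(K,N)$ and $(0,N'-N)$ is exactly the computation hidden in Sturm's proof, and the subsequent integration against $\mathsf{q}$ (using the change of variables) is correct. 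Your $\mathrm{(i)}\Rightarrow\mathrm{(ii)}$ is identical to the paper's.

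The gap is in $\mathrm{(ii)}\Rightarrow\mathrm{(iii)}$, and it is a genuine circularity. You open the argument with ``Since $\mathsf{M}$ is proper and non-branching and $\nu_0\ll\mathsf{m}$, the optimal coupling $\mathsf{q}$ of $\nu_0,\nu_1$ is unique and induced by a map $T$, and the Wasserstein geodesic joining $\nu_0$ and $\nu_1$ is unique,'' citing the measurable geodesic selection of Proposition~\ref{uni}. But Proposition~\ref{uni} is established \emph{under the hypothesis} $\mathsf{CD}^*(K,N)$, i.e.\ under statement $\mathrm{(i)}$; uniqueness of optimal plans/geodesics with absolutely continuous endpoint is not a freestanding consequence of properness and non-branching alone --- it requires the curvature input. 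In the cycle $\mathrm{(iii)}\Rightarrow\mathrm{(i)}\Rightarrow\mathrm{(ii)}\Rightarrow\mathrm{(iii)}$, when you arrive at $\mathrm{(ii)}\Rightarrow\mathrm{(iii)}$ you have only $\mathrm{(ii)}$ in hand, not $\mathrm{(i)}$, so Proposition~\ref{uni} is not available to you. And without uniqueness the key identification --- that the unique Wasserstein geodesic from $\nu^A_0$ to $\nu^A_1$ satisfying \eqref{convt} must coincide with the restriction $\Gamma^A$ of the big geodesic --- collapses: \eqref{convt} merely asserts existence of \emph{some} geodesic, which need not be $\Gamma^A$.

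The paper's architecture avoids this trap. In its proof of $\mathrm{(ii)}\Rightarrow\mathrm{(i)}$, one does not restrict a pre-existing geodesic and appeal to uniqueness; rather, for each small cell one \emph{chooses} a geodesic $\Gamma^{ij}$ satisfying the hypothesis of $\mathrm{(ii)}$, sums them up into $\Gamma^{(\varepsilon)}=\sum\alpha_{ij}\Gamma^{ij}$, and verifies via Lemma~\ref{midmutsing} that the pieces are mutually singular so the entropies add. No uniqueness is needed, only non-branching. Then, once $\mathrm{(i)}$ is secured, Proposition~\ref{uni} becomes available and the passage $\mathrm{(i)}\Rightarrow\mathrm{(iii)}$ (via Sturm's argument, which does use the optimal map and uniqueness) is logically licit. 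If you want to keep your cycle, you would need either to first establish $\mathrm{(ii)}\Rightarrow\mathrm{(i)}$ independently (as in the paper) and then prove $\mathrm{(i)}\Rightarrow\mathrm{(iii)}$, or to supply a direct proof that $\mathrm{(ii)}$ on a proper non-branching space already forces uniqueness of optimal couplings and $\mathsf{d}_W$-geodesics between absolutely continuous measures; the latter is nontrivial and not done in your proposal.
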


\begin{proof}
(i) $\Rightarrow$ (ii): This implication follows from the fact that
$$\sigma^{(t)}_{K,N'}(\theta_\alpha)\geq\sigma^{(t)}_{K,N'}(\theta_\beta)$$
for all $t\in[0,1]$, all $N'$ and all $\theta_\alpha,\theta_\beta\in\R_+$ with $K\theta_\alpha\geq K\theta_\beta$.

(ii) $\Rightarrow$ (i): We consider two measures $\nu_0=\rho_0\mathsf{m}$, $\nu_1=\rho_1\mathsf{m}\in\mathcal{P}(B_R(o),\mathsf{d},\mathsf{m})\subseteq\mathcal{P}_\infty(\mathsf{M},\mathsf{d},\mathsf{m})$ for some $o\in \mathsf{M}$ and $R>0$ and choose an arbitrary optimal coupling $\mathsf{\tilde q}$ of them. For each $\epsilon>0$, there exists a finite covering $(C_i)_{i=1,\dots,n\in\N}$ of $M_c:=\overline{B_{2R}(o)}$ by disjoint sets $C_1,\dots,C_n$ with diameter $\leq\epsilon/2$  due to the compactness of $M_c$ which is ensured by the properness of $\mathsf{M}$. Now, we define probability measures $\nu^{ij}_0$ and $\nu^{ij}_1$ for $i,j=1,\dots,n$ on $(M_c,\mathsf{d})$ by
$$\nu^{ij}_0(A):=\frac{1}{\alpha_{ij}}\mathsf{\tilde q}((A\cap C_i)\times C_j) \quad \text{and} \quad \nu^{ij}_1(A):=\frac{1}{\alpha_{ij}} \mathsf{\tilde q}(C_i\times(A\cap C_j)),$$
provided that $\alpha_{ij}:=\mathsf{\tilde q}(C_i\times C_j)\not=0$. Then
$$\mathsf{supp}(\nu^{ij}_0)\subseteq\overline{C_i}  \quad \text{and} \quad \mathsf{supp}(\nu^{ij}_1)\subseteq\overline{C_j}.$$
By assumption there exists a geodesic $\Gamma^{ij}:[0,1]\rightarrow\mathcal{P}(M_c,\mathsf{d},\mathsf{m})$ connecting $\nu^{ij}_0=\rho^{ij}_0\mathsf{m}$ and $\nu^{ij}_1=\rho^{ij}_1\mathsf{m}$ and satisfying
\begin{align*}
\mathsf{S}_{N'}&(\Gamma^{ij}(t)|\mathsf{m})\\
&\leq-\int_{\mathsf{M}\times \mathsf{M}}\Big[\sigma^{(1-t)}_{K,N'}(\max\{\mathsf{d}(x_0,x_1)\mp\epsilon,0\})\rho^{ij}_0(x_0)^{-1/N'}+\\
&\hspace{5cm}+\sigma^{(t)}_{K,N'}(\max\{\mathsf{d}(x_0,x_1)\mp\epsilon,0\})\rho^{ij}_1(x_1)^{-1/N'}\Big]d\mathsf{q}^{ij}(x_0,x_1)
\end{align*}
for all $t\in[0,1]$ and all $N'\geq N$, with $\mp$ depending on the sign of $K$ and with $\mathsf{q}^{ij}$ being an optimal coupling of $\nu^{ij}_0$ and $\nu^{ij}_1$. We define for each $\epsilon>0$ and all $t\in[0,1]$,
$$\mathsf{q}^{(\epsilon)}:=\sum^n_{i,j=1}\alpha_{ij}\mathsf{q}^{ij} \quad \text{and} \quad \Gamma^{(\epsilon)}(t):=\sum^n_{i,j=1}\alpha_{ij}\Gamma^{ij}(t).$$
Then $\mathsf{q}^{(\epsilon)}$ is an optimal coupling of $\nu_0$ and $\nu_1$ and $\Gamma^{(\epsilon)}$ defines a geodesic connecting them. Furthermore, since $\Gamma^{ij}(t)$ is a $t$-midpoint of $\nu^{ij}_0$ and $\nu^{ij}_1$, since the $\nu^{ij}_0\otimes\nu^{ij}_1$ are mutually singular for different choices of $(i,j)\in \{1,\dots,n\}^2$ and since $(M_c,\mathsf{d},\mathsf{m})$ is non-branching, the $\Gamma^{ij}(t)$ are as well mutually singular for different choices of $(i,j)\in \{1,\dots,n\}^2$ and for each fixed $t\in [0,1]$ due to Lemma \ref{midmutsing}. Hence, for all $N'$,
$$\mathsf{S}_{N'}(\Gamma^{(\epsilon)}(t)|\mathsf{m})=\sum_{ij}\alpha^{1-1/N'}_{ij}\mathsf{S}_{N'}(\Gamma^{ij}(t)|\mathsf{m}).$$

Compactness of $(M_c,\mathsf{d})$ implies that there exists a sequence $(\epsilon(k))_{k\in\N}$ converging to 0 such that $(\mathsf{q}^{(\epsilon(k))})_{k\in\N}$ converges to some $\mathsf{q}$ and such that $(\Gamma^{(\epsilon(k))})_{k\in\N}$ converges to some geodesic $\Gamma$ in $\mathcal{P}_\infty(M_c,\mathsf{d},\mathsf{m})$. Therefore, for fixed $\varepsilon>0$, all $t\in[0,1]$ and all $N'\geq N$,
\begin{align*}
&\mathsf{S}_{N'}(\Gamma(t)|\mathsf{m})\\
&\leq\liminf_{k\to\infty}\mathsf{S}_{N'}(\Gamma^{(\epsilon(k))}(t)|\mathsf{m})\\
&\leq-\limsup_{k\to\infty}\int\Big[\sigma^{(1-t)}_{K,N'}(\max\{\mathsf{d}(x_0,x_1)\mp\varepsilon,0\})\rho^{-\tfrac{1}{N'}}_0(x_0)+\\
&\hspace{6cm}+
\sigma^{(t)}_{K,N'}(\max\{\mathsf{d}(x_0,x_1)\mp\varepsilon,0\})\rho^{-\tfrac{1}{N'}}_1(x_1)\Big]d\mathsf{q}^{(\epsilon(k))}(x_0,x_1)\\
&\leq-\int_{\mathsf{M}\times \mathsf{M}}\Big[\sigma^{(1-t)}_{K,N'}(\max\{\mathsf{d}(x_0,x_1)\mp\varepsilon,0\})\rho^{-\tfrac{1}{N'}}_0(x_0)+\\
&\hspace{6cm}+
\sigma^{(t)}_{K,N'}(\max\{\mathsf{d}(x_0,x_1)\mp\varepsilon,0\})\rho^{-\tfrac{1}{N'}}_1(x_1)\Big]d\mathsf{q}(x_0,x_1)
\end{align*}
where the proof of the last inequality is similar to the proof of [Stu06b, Lemma 3.3]. In the limit $\varepsilon\to 0$ the claim follows due to the theorem of monotone convergence.

The equivalence (i) $\Leftrightarrow$ (iii) is obtained by following the arguments of the proof of [Stu06b, Proposition 4.2] replacing the coefficients $\tau^{(t)}_{K,N}(\cdot)$ by $\sigma^{(t)}_{K,N}(\cdot)$.
\end{proof}

\begin{remark}
To be honest, we suppressed an argument in the proof of Proposition \ref{nb}, (ii) $\Rightarrow$ (i): In fact, the compactness of $(M_c,\mathsf{d})$ implies the compactness of $\mathcal{P}(M_c,\mathsf{d})$ and therefore, we can deduce the existence of a limit $\Gamma$ of $(\Gamma^{(\epsilon(k))})_{k\in\N}$ - using the same notation as in the above proof - in $\mathcal{P}(M_c,\mathsf{d})$! A further observation ensures that $\Gamma$ is not only in $\mathcal{P}(M_c,\mathsf{d})$ but also in $\mathcal{P}_\infty(M_c,\mathsf{d},\mathsf{m})$ - as claimed in the above proof: The characterizing inequality of $\mathsf{CD}^*(K,N)$ implies the characterizing inequality of the property $\mathsf{Curv}(\mathsf{M},\mathsf{d},\mathsf{m})\geq K$ (at this point we refer to \cite{sa},\cite{sb}). Thus, the geodesic $\Gamma$ satisfies
$$\mathsf{Ent}(\Gamma(t)|\mathsf{m})\leq(1-t)\mathsf{Ent}(\Gamma(0)|\mathsf{m})+t\mathsf{Ent}(\Gamma(1)|\mathsf{m})-\frac{K}{2}t(1-t)\mathsf{d}^2_{\mathsf{W}}(\Gamma(0),\Gamma(1))$$
for all $t\in[0,1]$. This implies that $\mathsf{Ent}(\Gamma(t)|\mathsf{m})<+\infty$ and consequently, $\Gamma(t)\in\mathcal{P}_\infty(M_c,\mathsf{d},\mathsf{m})$ for all $t\in[0,1]$.
In the sequel, we will use similar arguments from time to time without emphasizing on them explicitly.
\end{remark}

\begin{proposition} [Midpoints] \label{midpoints}
A proper non-branching metric measure space $(\mathsf{M},\mathsf{d},\mathsf{m})$ satisfies $\mathsf{CD}^*(K,N)$ if and only if for all $\nu_0,\nu_1\in\mathcal{P}_\infty(\mathsf{M},\mathsf{d},\mathsf{m})$ there exists a midpoint $\eta\in\mathcal{P}_\infty(\mathsf{M},\mathsf{d},\mathsf{m})$ of $\nu_0$ and $\nu_1$ satisfying
\begin{equation} \label{convmid}
\mathsf{S}_{N'}(\eta|\mathsf{m})\leq\sigma^{(1/2)}_{K,N'}(\theta)\mathsf{S}_{N'}(\nu_0|\mathsf{m})+
\sigma^{(1/2)}_{K,N'}(\theta)\mathsf{S}_{N'}(\nu_1|\mathsf{m}),
\end{equation}
for all $N'\geq N$ where $\theta$ is defined as in (\ref{theta}).
\end{proposition}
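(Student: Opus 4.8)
The plan is to treat the two implications separately. The implication ``$\mathsf{CD}^*(K,N)\Rightarrow$ midpoint condition'' is immediate from Proposition~\ref{nb}: given $\nu_0,\nu_1\in\mathcal{P}_\infty(\mathsf{M},\mathsf{d},\mathsf{m})$, part (i)$\Rightarrow$(ii) yields a geodesic $\Gamma$ satisfying \eqref{convt} for all $t\in[0,1]$ and all $N'\ge N$; since $\sigma^{(1-1/2)}_{K,N'}=\sigma^{(1/2)}_{K,N'}$, the measure $\eta:=\Gamma(1/2)\in\mathcal{P}_\infty(\mathsf{M},\mathsf{d},\mathsf{m})$ is a midpoint of $\nu_0,\nu_1$ satisfying \eqref{convmid}. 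For the converse --- the substantial direction --- the strategy is to produce, for arbitrary $\nu_0,\nu_1\in\mathcal{P}_\infty(\mathsf{M},\mathsf{d},\mathsf{m})$, a geodesic obeying the pointwise inequality \eqref{pointwise} at every $t\in[0,1]$; by Proposition~\ref{nb}, (iii)$\Rightarrow$(i), this gives $\mathsf{CD}^*(K,N)$.

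The first step is to promote \eqref{convmid} to a \emph{pointwise} midpoint statement: for all $\nu_0=\rho_0\mathsf{m},\nu_1=\rho_1\mathsf{m}\in\mathcal{P}_\infty(\mathsf{M},\mathsf{d},\mathsf{m})$ there is an optimal coupling $\mathsf{q}$ of $\nu_0,\nu_1$ whose push-forward $\eta=\rho_{1/2}\mathsf{m}$ under $(x_0,x_1)\mapsto\gamma_{1/2}(x_0,x_1)$ (a midpoint of $\nu_0,\nu_1$) satisfies
\[
  \rho_{1/2}^{-1/N}\!\big(\gamma_{1/2}(x_0,x_1)\big)\ \ge\ \sigma^{(1/2)}_{K,N}\!\big(\mathsf{d}(x_0,x_1)\big)\,\Big(\rho_0^{-1/N}(x_0)+\rho_1^{-1/N}(x_1)\Big)\qquad\text{for }\mathsf{q}\text{-a.e.\ }(x_0,x_1).
\]
This is obtained from \eqref{convmid} by precisely the argument that derives statement (iii) of Proposition~\ref{nb} from statement (ii) there: one covers a large closed ball containing the supports by finitely many disjoint sets of diameter $\le\varepsilon/2$, disintegrates a fixed optimal coupling of $\nu_0,\nu_1$ over the corresponding products, applies \eqref{convmid} on each product (where the $\theta$ occurring there differs from $\mathsf{d}(x_0,x_1)$ by at most $\varepsilon$, so that, in the direction fixed by the sign of $K$, $\sigma^{(1/2)}_{K,N}(\theta)$ dominates $\sigma^{(1/2)}_{K,N}(\max\{\mathsf{d}(x_0,x_1)\mp\varepsilon,0\})$), glues the resulting pieces --- whose midpoints are mutually singular by Lemma~\ref{midmutsing}, so that the R\'enyi entropies add ---, passes to the limit using the compactness of the relevant space of probability measures furnished by properness, and finally lets $\varepsilon\downarrow0$ by monotone convergence.

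One now iterates this pointwise midpoint statement dyadically. Fixing $\nu_0,\nu_1$ and applying it repeatedly produces $\nu_q\in\mathcal{P}_\infty(\mathsf{M},\mathsf{d},\mathsf{m})$ for all dyadic $q\in[0,1]$; these form a dyadic geodesic (and, the space being non-branching, the midpoints are unique, so there are no choices), which extends by completeness of $(\mathcal{P}_2(\mathsf{M},\mathsf{d}),\mathsf{d}_{\mathsf{W}})$ to a geodesic $\Gamma:[0,1]\to\mathcal{P}_2(\mathsf{M},\mathsf{d})$. Non-branching makes $\Gamma$ the displacement interpolation $\Gamma(t)=\rho_t\mathsf{m}=\big(\gamma_t\big)_{\ast}\mathsf{q}$ (with $\gamma$ the measurable selection of Proposition~\ref{uni}), supported in a fixed bounded set; properness and --- as in the Remark after Proposition~\ref{nb} --- the $N=\infty$ consequence of the hypothesis give $\Gamma(t)\in\mathcal{P}_\infty(\mathsf{M},\mathsf{d},\mathsf{m})$. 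Fixing $(x_0,x_1)\in\operatorname{supp}\mathsf{q}$ and writing $D:=\mathsf{d}(x_0,x_1)$, $A:=\rho_0^{-1/N}(x_0)$, $B:=\rho_1^{-1/N}(x_1)$, $f(t):=\rho_t^{-1/N}(\gamma_t(x_0,x_1))$ and $u(t):=\sigma^{(1-t)}_{K,N}(D)\,A+\sigma^{(t)}_{K,N}(D)\,B$, I would show by induction on $n$ that $f(t)\ge u(t)$ for every $t=k/2^n$. The case $n=0$ is trivial, and for the step one applies the pointwise midpoint statement to $\nu_s,\nu_{s'}$ with $s=k/2^n$, $s'=(k+1)/2^n$: since $\gamma_s(x_0,x_1)$ and $\gamma_{s'}(x_0,x_1)$ lie on the constant-speed geodesic $t\mapsto\gamma_t(x_0,x_1)$ their distance is $2^{-n}D$, the optimal coupling of $\nu_s,\nu_{s'}$ is the one induced by $\mathsf{q}$, and their (unique) midpoint is $\Gamma(\tfrac{s+s'}{2})$, whence $\mathsf{q}$-a.e.
\[
  f\big(\tfrac{s+s'}{2}\big)\ \ge\ \sigma^{(1/2)}_{K,N}\!\big(2^{-n}D\big)\,\big(f(s)+f(s')\big)\ \ge\ \sigma^{(1/2)}_{K,N}\!\big(2^{-n}D\big)\,\big(u(s)+u(s')\big).
\]
Collecting the coefficients of $A$ and of $B$, the right-hand side equals $u(\tfrac{s+s'}{2})$ by virtue of the elementary identity
\[
  \sigma^{(1/2)}_{K,N}\!\big(2^{-n}D\big)\,\Big(\sigma^{(a)}_{K,N}(D)+\sigma^{(b)}_{K,N}(D)\Big)=\sigma^{((a+b)/2)}_{K,N}(D)\qquad\text{whenever }b-a=2^{-n},
\]
which follows from $\sigma^{(t)}_{K,N}(D)=\sin(tD\sqrt{K/N})/\sin(D\sqrt{K/N})$ and the sine addition formula (and the parallel $\sinh$- and linear identities for $K<0$ and $K=0$). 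Running the same argument with $N'$ in place of $N$ for every $N'\ge N$ gives \eqref{pointwise} at all dyadic $t$; continuity of $t\mapsto\Gamma(t)$, lower semicontinuity of $\mathsf{S}_{N'}(\cdot|\mathsf{m})$ and continuity of $\sigma$ then extend the inequality to all $t\in[0,1]$, and Proposition~\ref{nb}, (iii)$\Rightarrow$(i), yields $\mathsf{CD}^*(K,N)$.

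I expect the crux to be the step condensed into the last displayed identity: the distortion factors accumulated at successive midpoints --- each evaluated at the \emph{shrinking} distance $2^{-n}D$ between the current pair of interpolation points, which is exactly what forces one to work with the genuinely pointwise inequality rather than with $\theta$-dependent integrated versions --- must telescope \emph{exactly} to $\sigma^{(t)}_{K,N}(D)$. This stability under concatenation is a feature of the reduced coefficients $\sigma^{(t)}_{K,N}$, which are normalized solutions of a linear Jacobi-type equation, and is lost for the original coefficients $\tau^{(t)}_{K,N}=t^{1/N}(\sigma^{(t)}_{K,N-1})^{1-1/N}$ because of the non-linearizing factor $t^{1/N}$; this is precisely why a midpoint characterization, and the local-to-global principle, are available for $\mathsf{CD}^*$ but not for $\mathsf{CD}$. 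The non-branching hypothesis supplies the geometric bookkeeping the iteration needs --- uniqueness of midpoints and geodesics, collinearity of matched interpolation points, compatibility of the induced optimal couplings --- while Lemma~\ref{midmutsing} is what renders the R\'enyi entropies additive under the gluing in the reduction step.
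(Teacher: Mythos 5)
The forward implication is handled exactly as in the paper. For the converse you take a genuinely different route. The paper iterates the midpoint inequality \eqref{convmid} dyadically \emph{at the entropy level}: it builds $\Gamma$ on dyadic times, proves \eqref{convt} at those times by the sine half-angle identity, and extends by lower semi-continuity of $\mathsf{S}_{N'}$, i.e.\ it verifies characterization (ii) of Proposition~\ref{nb}. You instead first promote \eqref{convmid} to a pointwise density inequality at $t=\tfrac12$, iterate that pointwise inequality dyadically, and aim for characterization (iii). The crucial trigonometric identity is the same in both; the difference is which object carries the induction.

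The pointwise route introduces a gap that the entropy route is designed to avoid. In your inductive step you assert that ``the optimal coupling of $\nu_s,\nu_{s'}$ is the one induced by $\mathsf{q}$'' and that ``their (unique) midpoint is $\Gamma(\tfrac{s+s'}{2})$''. Neither follows from non-branching alone: uniqueness of optimal couplings and of Wasserstein midpoints for absolutely continuous measures is exactly what Proposition~\ref{uni} delivers \emph{under} $\mathsf{CD}^*(K,N)$, which is the conclusion being proved, so invoking it here is circular. Without these uniqueness statements the coupling and midpoint supplied by your pointwise hypothesis at each dyadic level need not be compatible with those of earlier levels, so the density $\rho_{(s+s')/2}$ you bound need not be the one carried by $\mathsf{q}$, and the induction $f\ge u$ does not close. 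The paper's argument never has to match couplings across levels precisely because $\mathsf{S}_{N'}$ is a functional of the measure alone: it picks \emph{some} midpoint at each step, uses that any Wasserstein midpoint of $\Gamma(s),\Gamma(s')$ concentrates on geodesics from $\mathcal{S}_0$ to $\mathcal{S}_1$ (so the relevant support separation is at least $2^{-k+1}\theta$, and for $K>0$ monotonicity of $\sigma^{(1/2)}_{K,N'}(\cdot)$ goes the right way since the $\mathsf{S}_{N'}$'s are non-positive), and runs the same telescoping identity. If you keep your pointwise-promotion step but then perform the dyadic iteration on entropies, you recover the paper's proof --- at which point the pointwise promotion is redundant.
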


\begin{proof}
We only consider the case $K>0$. The general case requires analogous calculations. Due to Proposition \ref{nb}, we have to prove that the existence of midpoints with property (\ref{convmid}) for all $N'\geq N$
implies the existence of geodesics satisfying property (\ref{convt}) for all $N'\geq N$.
Given $\Gamma(0):=\nu_0$ and $\Gamma(1):=\nu_1$, we define $\Gamma(\tfrac{1}{2})$ as a midpoint of $\Gamma(0)$ and $\Gamma(1)$
with property (\ref{convmid}) for all $N'\geq N$. Then we define $\Gamma(\tfrac{1}{4})$ as a midpoint of $\Gamma(0)$ and $\Gamma(\tfrac{1}{2})$
satisfying (\ref{convmid}) for all $N'\geq N$ and accordingly, $\Gamma(\tfrac{3}{4})$ as a midpoint of $\Gamma(\tfrac{1}{2})$ and $\Gamma(1)$
with (\ref{convmid}) for all $N'\geq N$. By iterating this procedure, we obtain $\Gamma(t)$ for all dyadic $t=l2^{-k}\in[0,1]$ for $k\in\N$ and odd $l=0,\dots,2^k$
with
\begin{align*}
\mathsf{S}_{N'}&\left(\Gamma\left(l2^{-k}\right)|\mathsf{m}\right)\leq\\
&\leq\sigma^{(1/2)}_{K,N'}\left(2^{-k+1}\theta\right)\mathsf{S}_{N'}\left(\Gamma\left((l-1)2^{-k}\right)|\mathsf{m}\right)+
\sigma^{(1/2)}_{K,N'}\left(2^{-k+1}\theta\right)\mathsf{S}_{N'}\left(\Gamma\left((l+1)2^{-k}\right)|\mathsf{m}\right),
\end{align*}
for all $N'\geq N$ where $\theta$ is defined as above.

Now, we consider $k>0$. By induction, we are able to pass from level $k-1$ to level $k$: Assuming that $\Gamma(t)$ satisfies property (\ref{convt}) for all $t=l2^{-k+1}\in[0,1]$ and all $N'\geq N$, we have for an odd number $l\in\{0,\dots,2^{-k}\}$,
\begin{align*}
\mathsf{S}_{N'}&\left(\Gamma\left(l2^{-k}\right)|\mathsf{m}\right)\leq\\
&\leq\sigma^{(1/2)}_{K,N'}\left(2^{-k+1}\theta\right)\mathsf{S}_{N'}\left(\Gamma\left((l-1)2^{-k}\right)|\mathsf{m}\right)+
\sigma^{(1/2)}_{K,N'}\left(2^{-k+1}\theta\right)\mathsf{S}_{N'}\left(\Gamma\left((l+1)2^{-k}\right)|\mathsf{m}\right)\\
&\leq\sigma^{(1/2)}_{K,N'}\left(2^{-k+1}\theta\right)\left[\sigma^{\left(1-(l-1)2^{-k}\right)}_{K,N'}(\theta)\mathsf{S}_{N'}(\Gamma(0)|\mathsf{m})+
\sigma^{\left((l-1)2^{-k}\right)}_{K,N'}(\theta)\mathsf{S}_{N'}(\Gamma(1)|\mathsf{m})\right]+\\
&+\sigma^{(1/2)}_{K,N'}\left(2^{-k+1}\theta\right)\left[\sigma^{\left(1-(l+1)2^{-k}\right)}_{K,N'}(\theta)\mathsf{S}_{N'}(\Gamma(0)|\mathsf{m})+
\sigma^{\left((l+1)2^{-k}\right)}_{K,N'}(\theta)\mathsf{S}_{N'}(\Gamma(1)|\mathsf{m})\right]
\end{align*}
for all $N'\geq N$. Calculating the prefactor of $\mathsf{S}_{N'}(\Gamma(0)|\mathsf{m})$ yields
\begin{align*}
&\sigma^{(1/2)}_{K,N'}\left(2^{-k+1}\theta\right)\sigma^{\left(1-(l-1)2^{-k}\right)}_{K,N'}(\theta)+
\sigma^{(1/2)}_{K,N'}\left(2^{-k+1}\theta\right)\sigma^{\left(1-(l+1)2^{-k}\right)}_{K,N'}(\theta)=\\
&=\frac{\sin\left(2^{-k}\theta\sqrt{K/N'}\right)\cdot\left[\sin\left(\left(1-(l-1)2^{-k}\right)\theta\sqrt{K/N'}\right)+
\sin\left(\left(1-(l+1)2^{-k}\right)\theta\sqrt{K/N'}\right)\right]}{\sin\left(2^{-k+1}\theta\sqrt{K/N'}\right)\sin\left(\theta\sqrt{K/N'}\right)}\\
&=\frac{2\sin\left(\left(1-l2^{-k}\right)\theta\sqrt{K/N'}\right)\cos\left(2^{-k}\theta\sqrt{K/N'}\right)}
{2\cos\left(2^{-k}\theta\sqrt{K/N'}\right)\sin\left(\theta\sqrt{K/N'}\right)}=\\
&=\frac{\sin\left(\left(1-l2^{-k}\right)\theta\sqrt{K/N'}\right)}{\sin\left(\theta\sqrt{K/N'}\right)}=\sigma^{\left(1-l2^{-k}\right)}_{K,N'}(\theta),
\end{align*}
and calculating the one of $\mathsf{S}_{N'}(\Gamma(1)|\mathsf{m})$ gives
\begin{align*}
\sigma^{(1/2)}_{K,N'}&\left(2^{-k+1}\theta\right)\sigma^{\left((l-1)2^{-k}\right)}_{K,N'}(\theta)+
\sigma^{(1/2)}_{K,N'}\left(2^{-k+1}\theta\right)\sigma^{\left((l+1)2^{-k}\right)}_{K,N'}(\theta)=\\
&=\frac{\sin\left(2^{-k}\theta\sqrt{K/N'}\right)\cdot\left[\sin\left((l-1)2^{-k}\theta\sqrt{K/N'}\right)+
\sin\left((l+1)2^{-k}\theta\sqrt{K/N'}\right)\right]}{\sin\left(2^{-k+1}\theta\sqrt{K/N'}\right)\sin\left(\theta\sqrt{K/N'}\right)}\\
&=\frac{2\sin\left(l2^{-k}\theta\sqrt{K/N'}\right)\cos\left(2^{-k}\theta\sqrt{K/N'}\right)}
{2\cos\left(2^{-k}\theta\sqrt{K/N'}\right)\sin\left(\theta\sqrt{K/N'}\right)}=\\
&=\frac{\sin\left(l2^{-k}\theta\sqrt{K/N'}\right)}{\sin\left(\theta\sqrt{K/N'}\right)}=\sigma^{\left(l2^{-k}\right)}_{K,N'}(\theta).
\end{align*}
Combining the above results leads to property (\ref{convt}),
$$\mathsf{S}_{N'}\left(\Gamma\left(l2^{-k}\right)|\mathsf{m}\right)\leq\sigma^{\left(1-l2^{-k}\right)}_{K,N'}(\theta)\mathsf{S}_{N'}(\Gamma(0)|\mathsf{m})+
\sigma^{\left(l2^{-k}\right)}_{K,N'}(\theta)\mathsf{S}_{N'}(\Gamma(1)|\mathsf{m})$$
for all $N'\geq N$. The continuous extension of $\Gamma(t)$ -- $t$ dyadic -- yields the desired geodesic due to the lower semi-continuity of the R\'enyi entropy.
\end{proof}

\begin{lemma} \label{boundedsupport}
Fix two real parameters $K$ and $N\geq 1$. If $(\mathsf{M},\mathsf{d},\mathsf{m})$ is non-branching then the reduced curvature-dimension condition $\mathsf{CD}^*(K,N)$ implies that for all $\nu_0,\nu_1\in\mathcal{P}_2(\mathsf{M},\mathsf{d},\mathsf{m})$ there exist an optimal
coupling $\mathsf{q}$ of $\nu_0=\rho_0\mathsf{m}$ and $\nu_1=\rho_1\mathsf{m}$ and a geodesic
$\Gamma:[0,1]\rightarrow\mathcal{P}_2(\mathsf{M},\mathsf{d},\mathsf{m})$ connecting $\nu_0$ and $\nu_1$ and satisfying (\ref{ren}) for all $N'\geq N$.
\end{lemma}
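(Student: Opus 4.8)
The plan is to reduce the general case $\nu_0,\nu_1\in\mathcal{P}_2(\mathsf{M},\mathsf{d},\mathsf{m})$ to the already available statement for measures with bounded support by an exhaustion/restriction argument, exactly along the lines of \cite[Lemma 3.3]{sb} but with the coefficients $\tau^{(t)}_{K,N'}$ replaced by $\sigma^{(t)}_{K,N'}$. First I would fix $\nu_0=\rho_0\mathsf{m}$, $\nu_1=\rho_1\mathsf{m}\in\mathcal{P}_2(\mathsf{M},\mathsf{d},\mathsf{m})$ and a reference point $o\in\mathsf{supp}(\mathsf{m})$, and for $R>0$ set $B_R:=B_R(o)$. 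Restricting and renormalizing, define $\nu_i^R:=\frac{1}{Z_i^R}\,\mathbf{1}_{B_R}\,\nu_i$ with $Z_i^R:=\nu_i(B_R)\to 1$ as $R\to\infty$. These measures lie in $\mathcal{P}_\infty(\mathsf{M},\mathsf{d},\mathsf{m})$, so by $\mathsf{CD}^*(K,N)$ there are optimal couplings $\mathsf{q}^R$ and geodesics $\Gamma^R:[0,1]\to\mathcal{P}_\infty(\mathsf{M},\mathsf{d},\mathsf{m})$ with $\Gamma^R(0)=\nu_0^R$, $\Gamma^R(1)=\nu_1^R$ satisfying \eqref{ren}. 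One checks $\nu_i^R\to\nu_i$ in $\mathcal{P}_2$ (dominated convergence for the second moments plus weak convergence), and that $\mathsf{d}_\mathsf{W}(\Gamma^R(0),\Gamma^R(1))$ stays bounded.

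Next I would extract a limit. By a standard tightness/compactness argument for Wasserstein geodesics with bounded length and fixed, convergent endpoints (using that $\mathsf{M}$ is proper — which by Remark \ref{remark}(iv), i.e. Theorem \ref{bishop}, is automatic under $\mathsf{CD}^*(K,N)$ when $K>0$, and in general one works on a proper support or invokes the analogous compactness statement as used implicitly elsewhere in the paper), one obtains along a subsequence $R_k\to\infty$ that $\mathsf{q}^{R_k}\to\mathsf{q}$ weakly for some optimal coupling $\mathsf{q}$ of $\nu_0,\nu_1$, and $\Gamma^{R_k}\to\Gamma$ uniformly for some geodesic $\Gamma:[0,1]\to\mathcal{P}_2(\mathsf{M},\mathsf{d},\mathsf{m})$ from $\nu_0$ to $\nu_1$. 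Then for each fixed $t$ and $N'\ge N$ I would pass to the limit in \eqref{ren}: the left-hand side $\mathsf{S}_{N'}(\Gamma(t)|\mathsf{m})\le\liminf_k\mathsf{S}_{N'}(\Gamma^{R_k}(t)|\mathsf{m})$ by lower semicontinuity of the R\'enyi entropy; for the right-hand side I would use that $\sigma^{(s)}_{K,N'}(\mathsf{d}(x_0,x_1))$ is continuous in $(x_0,x_1)$ (away from the degenerate set where it is $+\infty$, which causes no trouble since the inequality is then vacuous on that part) together with the convergence of the normalized densities $(\rho_i^{R_k})^{-1/N'}\to\rho_i^{-1/N'}$ and a uniform integrability/Fatou argument to pass the limit inside the integral against $\mathsf{q}^{R_k}$.

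The main obstacle is the limiting procedure for the right-hand side integrand: the map $(x_0,x_1)\mapsto\rho_i^{-1/N'}(x_i)$ need not be continuous and need not be bounded, so one cannot simply invoke weak convergence of $\mathsf{q}^{R_k}$. The resolution — and this is precisely the point referred to in the phrase "the proof of the last inequality is similar to the proof of [Stu06b, Lemma 3.3]" in the proof of Proposition \ref{nb} — is to exploit that $\nu_i^{R_k}$ is, up to the normalization $Z_i^{R_k}$, a \emph{restriction} of $\nu_i$, so $\rho_i^{R_k}=\frac{1}{Z_i^{R_k}}\rho_i$ on $B_{R_k}$; this monotone structure lets one rewrite the $\mathsf{q}^{R_k}$-integral as an integral against (a restriction of) $\mathsf{q}^{R_k}$ with a \emph{fixed} integrand $\sigma^{(s)}_{K,N'}(\mathsf{d})\,\rho_i^{-1/N'}$ up to harmless powers of $Z_i^{R_k}\to1$, and then apply lower semicontinuity together with the fact that, by the analogue of \cite[Lemma 3.3]{sb}, the couplings $\mathsf{q}^{R_k}$ are themselves restrictions of a single optimal coupling of $\nu_0,\nu_1$ (non-branching is used here, via the gluing and Lemma \ref{midmutsing}-type arguments, to identify $\mathsf{q}$ and to control $\rho_t$). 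Once the integrand is fixed, the passage to the limit is monotone convergence / Fatou, and letting first $k\to\infty$ and noting the inequality is stable, one recovers \eqref{ren} for $\nu_0,\nu_1$. Finally, uniformity in $t$ is obtained as usual by verifying \eqref{ren} on a countable dense set of $t$ and extending by lower semicontinuity of $t\mapsto\mathsf{S}_{N'}(\Gamma(t)|\mathsf{m})$ along the geodesic, and the statement holds for all $N'\ge N$ since $N'$ was arbitrary throughout.
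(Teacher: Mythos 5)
The paper's proof uses a fundamentally different, and substantially cleaner, approach than yours: it picks a countable partition of $\M$ into mutually disjoint bounded sets $L_i$, decomposes $\nu_0=\sum_{i,j}\alpha_{ij}\nu_0^{ij}$ and $\nu_1=\sum_{i,j}\alpha_{ij}\nu_1^{ij}$ via a fixed optimal coupling $\tilde q$ restricted to $L_i\times L_j$, applies $\mathsf{CD}^*(K,N)$ to each bounded pair $(\nu_0^{ij},\nu_1^{ij})$, and then \emph{sums}. Because the $\nu_0^{ij}\otimes\nu_1^{ij}$ are mutually singular and the space is non-branching, Lemma~\ref{midmutsing} gives mutual singularity of the $\Gamma^{ij}_t$ at every $t$, so the R\'enyi entropy decomposes exactly, $\mathsf{S}_{N'}(\Gamma_t|\m)=\sum_{i,j}\alpha_{ij}^{1-1/N'}\mathsf{S}_{N'}(\Gamma^{ij}_t|\m)$, and one simply adds the inequalities. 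There is no limiting procedure, no tightness, no lower semicontinuity, and no problem with unbounded or discontinuous integrands. Your exhaustion-by-balls argument replaces this algebraic identity by an analytic limit $R\to\infty$, which is where the difficulties you partly acknowledge arise.

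The concrete gap is your assertion that the couplings $\mathsf{q}^{R_k}$ (produced by $\mathsf{CD}^*$ for the renormalized ball restrictions $\nu_i^{R_k}$) ``are themselves restrictions of a single optimal coupling of $\nu_0,\nu_1$,'' supposedly by non-branching and a Lemma~\ref{midmutsing}-type argument. This is false. Non-branching gives \emph{uniqueness} of the optimal coupling between each fixed pair of absolutely continuous marginals, but it gives no relation between the optimal coupling of $(\nu_0,\nu_1)$ and that of $(\nu_0^R,\nu_1^R)$. The restriction property of optimal transport goes the other way: restricting an optimal coupling $\tilde q$ to $B_R\times B_R$ produces an optimal coupling of the \emph{marginals of that restriction}, which are generically \emph{not} $\nu_0^R=\tfrac{1}{Z_0^R}\mathbf 1_{B_R}\nu_0$ and $\nu_1^R=\tfrac{1}{Z_1^R}\mathbf 1_{B_R}\nu_1$ (already on $\R$ with monotone rearrangement, both marginals of $\tilde q|_{B_R\times B_R}$ differ from the ball-truncations whenever $F_0(\pm R)\ne F_1(\pm R)$). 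So you face a genuine dilemma: either you truncate the marginals and lose control of $\mathsf{q}^R$, or you truncate the coupling and lose the monotone form of $\rho_i^R$ that you need to reduce the integrand to a fixed function. Your argument implicitly assumes you can have both. Without that, the passage to the limit on the right-hand side of \eqref{ren} --- with an integrand that is neither bounded nor lower semicontinuous and a sequence of couplings with no common structure --- is not justified. If you want to make an exhaustion argument work you would essentially have to re-introduce the paper's idea and partition into finitely many disjoint bounded pieces compatible with a fixed optimal coupling; at that point you are reproving the paper's argument, and the $R\to\infty$ limit becomes a countable sum, which is precisely what the paper does directly.

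Two further, smaller remarks. First, properness of $\mathsf{supp}(\m)$ under $\mathsf{CD}^*(K,N)$ is not restricted to $K>0$; Theorem~\ref{bishop} as stated in the paper gives compactness of bounded closed subsets of $\mathsf{supp}(\m)$ for all $K$, so your parenthetical hedge is unnecessary. Second, the uniformity in $t$ is not an issue in the paper's proof at all, because the inequality is obtained for each fixed $t$ by a finite (or $\sigma$-additive) sum, with no limit in $R$; your proposal needs the extra density-in-$t$ argument only because of the limiting step that the paper does not take.
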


\begin{proof}
We assume that $(\mathsf{M},\mathsf{d},\mathsf{m})$ satisfies $\mathsf{CD}^*(K,N)$. Fix a covering of $M$ by mutual disjoint, bounded sets $L_i, i\in{\mathbb N}$. Let $\nu_0=\rho_0\mathsf{m},\nu_1=\rho_1\mathsf{m}\in\mathcal{P}_2(\mathsf{M},\mathsf{d},\mathsf{m})$ and an optimal coupling ${\tilde{q}}$ of $\nu_0$ and $\nu_1$ be given.
Define probability
measures $\nu_0^{ij}, \nu_1^{ij}\in\mathcal{P}_\infty(\mathsf{M},\mathsf{d},\mathsf{m})$ for $i,j\in{\mathbb N}$ by
\begin{eqnarray*}
\label{7a}
\nu_0^{ij}(A):=\frac1{\alpha_{ij}} \tilde q((A\cap L_i)\times L_j)\quad\mbox{ and }\quad
\nu_1^{ij}(A):=\frac1{\alpha_{ij}} \tilde q(L_i\times(A\cap L_j))
\end{eqnarray*}
provided $\alpha_{ij}:=\tilde q(L_i\times L_j)\not=0$.
According to $\mathsf{CD}^*(K,N)$, for each pair $i,j\in\N$, there exist an optimal coupling ${q}_{ij}$ of $\nu^{ij}_0=\rho^{ij}_0\mathsf{m}$ and $\nu^{ij}_1=\rho^{ij}_1\mathsf{m}$ and a geodesic $\Gamma^{ij}:[0,1]\to\mathcal{P}_\infty(\mathsf{M},\mathsf{d},\mathsf{m})$ joining them such that
\begin{align*}
&\mathsf{S}_{N'}(\Gamma^{ij}(t)|\mathsf{m})\leq\\
&\leq-\int_{\mathsf{M}\times
\mathsf{M}}\left[\sigma^{(1-t)}_{K,N'}(\mathsf{d}(x_0,x_1))\rho^{ij}_0(x_0)^{-1/N'}+
\sigma^{(t)}_{K,N'}(\mathsf{d}(x_0,x_1))\rho^{ij}_1(x_1)^{-1/N'}\right]d {q}_{ij}(x_0,x_1)
\end{align*}
for all $t\in[0,1]$ and all $N'\geq N$.
Define
\begin{eqnarray*}\label{7b}
q:=\sum_{i,j=1}^\infty \alpha_{ij} q^{ij}, \quad
\Gamma_t:=\sum_{i,j=1}^\infty \alpha_{ij} \Gamma^{ij}_t.
\end{eqnarray*}
Then $q$ is an optimal coupling of $\nu_0$ and $\nu_1$ and $\Gamma$
is a geodesic connecting them. Moreover, since the
$\nu_0^{ij}\otimes \nu_1^{ij}$ for different choices of
$(i,j)\in\mathbb{N}^2$ are mutually singular and since $M$ is
non-branching, also  the $\Gamma_t^{ij}$ for different choices of
$(i,j)\in\{1,\ldots,n\}^2$ are mutually singular, Lemma \ref{midmutsing}
(for each fixed $t\in[0,1]$). Hence,
$$S_{N'}(\Gamma_t|m)=\sum_{i,j=1}^\infty \alpha_{ij}^{1-1/N'}\cdot S_{N'}(\Gamma_t^{ij}|m)$$
and one simply may sum up both sides of the previous inequality
-- multiplied by $\alpha_{ij}^{1-1/N'}$ -- to obtain the claim.
\end{proof}

\begin{remark}
Let us point out that the same arguments prove that on non-branching spaces the curvature-dimension condition $\mathsf{CD}(K,N)$ as formulated in this paper -- which requires only conditions on probability measures with bounded support -- implies the analogous condition in the second author's previous paper \cite{sb} (where conditions on all  probability measures with finite second moments had been imposed).
\end{remark}

\begin{remark}
The curvature-dimension condition $\mathsf{CD}(K,N)$ does not imply the non-branching property. For instance, Banach spaces satisfy $\mathsf{CD}(0,N)$ whereas they are not always non-branching. Moreover, even in the special case of limits of Riemannian manifolds with uniform lower Ricci curvature bounds, it is not known whether they are non-branching or not.
\end{remark}

\section{Stability under Convergence}

\begin{theorem} \label{stab}
Let $((\mathsf{M}_n,\mathsf{d}_n,\mathsf{m}_n))_{n\in\N}$ be a sequence of normalized metric measure spaces with the property that for each $n\in\N$ the space $(\mathsf{M}_n,\mathsf{d}_n,\mathsf{m}_n)$ satisfies the reduced curvature-dimension condition $\mathsf{CD}^*(K_n,N_n)$. Assume that for $n\to\infty$,
$$(\mathsf{M}_n,\mathsf{d}_n,\mathsf{m}_n)\overset{\D}{\to}(\M,\mathsf{d},\mathsf{m})$$
as well as $(K_n,N_n)\to(K,N)$ for some $(K,N)\in\R^2$. Then the space $(\M,\mathsf{d},\mathsf{m})$ fulfills $\mathsf{CD}^*(K,N)$.
\end{theorem}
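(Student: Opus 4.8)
The plan is to mimic the stability proof for the original condition $\mathsf{CD}(K,N)$ from \cite{sb}, carefully carrying over the arguments with $\tau^{(t)}_{K,N}(\cdot)$ replaced by $\sigma^{(t)}_{K,N}(\cdot)$. First I would reduce to a convenient formulation: by Proposition~\ref{nb} (after noting that $\mathsf{CD}^*(K,N)$ forces properness via Remark~\ref{remark}(iv) / Theorem~\ref{bishop}) it suffices to produce, for each pair $\nu_0,\nu_1\in\mathcal{P}_\infty(\M,\mathsf{d},\mathsf{m})$, a geodesic $\Gamma$ in $\mathcal{P}_\infty(\M,\mathsf{d},\mathsf{m})$ satisfying the global inequality~(\ref{ren}) (or equivalently the cleaner version~(\ref{convt})). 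Actually, since convergence in $\D$ does not force the limit or the approximants to be non-branching, I would work directly with the defining inequality~(\ref{ren}) involving an optimal coupling $\mathsf{q}$, and not invoke the equivalent characterizations; those were only proved for non-branching spaces.

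The core of the argument is a coupling/gluing construction. Given the $\D$-convergence, fix optimal couplings $\hat{\mathsf{q}}_n$ of $\mathsf{m}_n$ and $\mathsf{m}$ together with couplings $\hat{\mathsf{d}}_n$ of $\mathsf{d}_n$ and $\mathsf{d}$ realizing (up to $\epsilon_n\to 0$) the distance $\D((\M_n,\mathsf{d}_n,\mathsf{m}_n),(\M,\mathsf{d},\mathsf{m}))$. Embed everything into a common metric space. For a fixed pair $\nu_0=\rho_0\mathsf{m},\nu_1=\rho_1\mathsf{m}\in\mathcal{P}_\infty(\M,\mathsf{d},\mathsf{m})$, push these measures back to $\M_n$ via the coupling $\hat{\mathsf{q}}_n$ (the standard ``transport the reference densities'' trick: define $\nu_i^{(n)}$ on $\M_n$ with densities obtained by disintegrating $\hat{\mathsf{q}}_n$), so that $\nu_i^{(n)}\to\nu_i$ weakly and $\mathsf{S}_{N_n'}(\nu_i^{(n)}|\mathsf{m}_n)\to\mathsf{S}_{N'}(\nu_i|\mathsf{m})$ with controlled error; one has to truncate and regularize the densities to keep them bounded so that the $\nu_i^{(n)}$ lie in $\mathcal{P}_\infty(\M_n,\mathsf{d}_n,\mathsf{m}_n)$, approximating the general case afterwards by the usual diagonal/lower-semicontinuity argument. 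Apply $\mathsf{CD}^*(K_n,N_n)$ on $\M_n$ to obtain optimal couplings $\mathsf{q}_n$ and geodesics $\Gamma_n$ satisfying~(\ref{ren}) with parameters $(K_n,N_n')$. Transport $\Gamma_n(t)$ and $\mathsf{q}_n$ back to the ambient space; using properness of the supports (uniform by the Bonnet--Myers/Bishop--Gromov estimates available for $\mathsf{CD}^*$, i.e.\ Theorem~\ref{bishop}, when $K>0$, or by a compactness-of-$\D$-limits argument in general) extract subsequential weak limits $\mathsf{q}$ of $\mathsf{q}_n$ and $\Gamma$ of $\Gamma_n$; $\mathsf{q}$ is then an optimal coupling of $\nu_0,\nu_1$ and $\Gamma$ a geodesic in $\mathcal{P}_2$, in fact in $\mathcal{P}_\infty$ by the entropy bound coming from $\mathsf{Curv}\ge K$ as in the Remark following Proposition~\ref{nb}.

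Finally I would pass to the limit in inequality~(\ref{ren}). On the left, $t\mapsto\mathsf{S}_{N'}(\cdot|\mathsf{m})$ is lower semicontinuous under weak convergence together with $\D$-convergence of the underlying spaces, so $\mathsf{S}_{N'}(\Gamma(t)|\mathsf{m})\le\liminf_n\mathsf{S}_{N_n'}(\Gamma_n(t)|\mathsf{m}_n)$ (with $N_n'\ge N_n$ chosen converging to a given $N'\ge N$; monotonicity of $\mathsf{S}$ in the order parameter handles the mismatch). On the right, I use the continuity of $(K,N)\mapsto\sigma^{(t)}_{K,N}(\theta)$ noted in the Remark after Lemma~\ref{sigma}, joint continuity in $\theta$, and convergence of $\mathsf{d}_n$ to $\mathsf{d}$ along the coupling (with the $\mp\epsilon$ shift trick as in the proof of Proposition~\ref{nb}(ii)$\Rightarrow$(i) to absorb the discrepancy between $\hat{\mathsf{d}}_n$ and $\mathsf{d}$), plus weak convergence of $\mathsf{q}_n$ to $\mathsf{q}$ and of the reference densities, to obtain the desired upper bound with parameters $(K,N')$. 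I expect the main obstacle to be the bookkeeping in the lower semicontinuity of the R\'enyi entropy when the reference measures $\mathsf{m}_n$ are themselves varying — this is the technically delicate point already present in \cite{sb}, requiring one to combine weak convergence with the coupling $\hat{\mathsf{q}}_n$ and to handle the densities' behavior under the transport maps — and, when $K\le 0$, ensuring the necessary uniform tightness/properness to extract limits without the a priori compactness that $K>0$ would provide (here one leans on the fact that $\D$-limits of normalized spaces are again normalized and on uniform second-moment bounds).
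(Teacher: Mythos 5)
Your proposal follows essentially the same route as the paper: it mirrors the stability proof of $\mathsf{CD}(K,N)$ from \cite{sb} by pushing $\nu_0,\nu_1$ through the $\D$-couplings to the $\M_n$, applying $\mathsf{CD}^*(K_n,N_n)$ there, transporting the resulting geodesics and optimal couplings back, and passing to the limit using lower semicontinuity of the R\'enyi entropies and continuity of $\sigma^{(t)}_{K,N}(\theta)$ in $(K,N,\theta)$, with tightness supplied by properness of the limit support. You correctly note that one must work directly with inequality~(\ref{ren}) rather than via Proposition~\ref{nb}, which requires non-branching; the one aspect you treat a bit more loosely than the paper is the replacement of the uniform-diameter hypothesis — the paper pins this down by first deriving a common doubling constant (hence properness of $\mathsf{supp}(\m)$) from $\mathsf{CD}^*(K_n,N_n)$ and then a uniform diameter bound via the weak Bonnet--Myers estimate, and also performs an explicit two-stage limit (first over $n$ for fixed $\bar K<K$, $\bar N>N$, then $\bar K\nearrow K$, $\bar N\searrow N$) that you compress into a single continuity step.
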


\begin{proof}
The proof essentially follows the line of argumentation in \cite[Theorem 3.1]{sb} with two modifications:
\begin{itemize}
\item[$\ast$] The coefficients $\tau^{(t)}_{K,N}(\cdot)$ will be replaced by $\sigma^{(t)}_{K,N}(\cdot)$.
\item[$\ast$] The assumption of a uniform upper bound $L_0<L_{\mathrm{max}}$ on the diameters will be removed. (Here $L_{\mathrm{max}}$ will be $\pi\sqrt{\tfrac{N}{K}}$ for $K>0$, previously it was $\pi\sqrt{\tfrac{N-1}{K}}$.)
\end{itemize}
\begin{itemize}
\item[(i)] Let us firstly observe that $\mathsf{CD}^*(K_n,N_n)$ with $K_n\to K$ and $N_n\to N$ implies that the spaces $(\mathsf{M}_n,\mathsf{d}_n,\mathsf{m}_n)$ have the \textquoteleft doubling property\textquoteright \ with a common doubling constant $C$ on subsets $M'_n\subseteq\mathsf{supp}(\m_n)$ with uniformly bounded diameter $\theta$ (see \cite[Corollary 2.4]{sb} and also Theorem \ref{bishop}). This version of the doubling property is stable under $\D$-convergence due to \cite[Theorem 3.15]{sa} and thus also holds on bounded sets $M'\subseteq\mathsf{supp}(\m)$. Therefore, $\mathsf{supp}(\m)$ is proper.
\item[(ii)] Choose $\bar{N}>N$ and $\bar{K}<K$ and put $\bar{L}:=\pi\sqrt{\tfrac{\bar{N}}{\bar{K}}}$ as well as $L:=\pi\sqrt{\tfrac{N}{K}}$ provided that $\bar{K}>0$ and $K>0$. Otherwise, $\bar{L}=\infty$, $L=\infty$. Then
$$\max\left\{\tfrac{\partial}{\partial\theta}\sigma^{(s)}_{K',N'}(\theta):s\in[0,1],K'\leq\bar{K},N'\geq\bar{N},\theta\in\left[0,\tfrac{L+\bar{L}}{2}\right]\right\}$$  is bounded.
\item[(iii)] For each $n\in\N$, $\mathsf{diam}(\mathsf{supp}(\m_n))\leq L_n:=\pi\sqrt{\tfrac{N_n}{K_n}}$
due to Corollary \ref{bonnet}. In particular, given $\bar{K}$, $\bar{N}$ as above
$$\mathsf{diam}(\mathsf{supp}(\m_n))\leq\tfrac{L+\bar{L}}{2}$$
for all sufficiently large $n\in\N$. The latter implies
$$\mathsf{diam}(\mathsf{supp}(\m))\leq\tfrac{L+\bar{L}}{2}$$
according to \cite[Theorem 3.16]{sa}.
\item[(iv)] Let us now follow the proof in \cite[Theorem 3.1]{sb}. In short, we consider $\nu_0,\nu_1\in\mathcal{P}_\infty(\mathsf{M},\mathsf{d},\mathsf{m})$ and approximate them by probability measures $\nu_{0,n}$ and $\nu_{1,n}$ in $\mathcal{P}_\infty(\mathsf{M}_n,\mathsf{d}_n,\mathsf{m}_n)$ satisfying the relevant equation (\ref{ren}) with an optimal coupling $\mathsf{q}_n$ and a geodesic $\Gamma_{t,n}$ due to the reduced curvature-dimension condition on $(\mathsf{M}_n,\mathsf{d}_n,\mathsf{m}_n)$. Via a map $\mathcal{Q}:\mathcal{P}_2(\mathsf{M}_n,\mathsf{d}_n,\mathsf{m}_n)\rightarrow\mathcal{P}_2(\mathsf{M},\mathsf{d},\mathsf{m})$ introduced in \cite[Lemma 4.19]{sa} we define an \textquoteleft $\varepsilon$-approximative\textquoteright \ geodesic $\Gamma^\varepsilon_{t}:=\mathcal{Q}(\Gamma_{t,n})$ from $\nu_0$ to $\nu_1$ satisfying (\ref{ren}) for an \textquoteleft $\varepsilon$-approximative\textquoteright \ coupling $\mathsf{q}^\varepsilon$ of $\nu_0$ and $\nu_1$.
\item[(v)] The properness of $\mathsf{supp}(\m)$ implies that $\Gamma^\varepsilon_{t}$ and $\mathsf{q}^\varepsilon$ are tight (i.e. essentially supported on compact sets -- uniformly in $\varepsilon$) which yields the existence of accumulation points $\bar{\Gamma}_t$ and $\bar{\mathsf{q}}$ satisfying (\ref{ren}) -- with $K'$ in the place of $K$ -- for all $K'\leq\bar{K}$ and all $N'\geq\bar{N}$.
\item[(vi)] Choosing sequences $\bar{N}_l\searrow N$ and $\bar{K}_l\nearrow K$ and again passing to the limits $\Gamma_t=\lim_l\bar{\Gamma}^l_t$ and $\mathsf{q}=\lim_l\bar{\mathsf{q}} ^l$ we obtain an optimal coupling $\mathsf{q}$ and a geodesic $\Gamma$ satisfying (\ref{ren}) for all $K'<K$ and all $N'>N$. Finally, continuity of all the involved terms in $K'$ and $N'$ proves the claim.
\end{itemize}
\end{proof}

\begin{remark}
The previous proof demonstrates that in the analogous formulation of the stability result for $\mathsf{CD}(K,N)$ in \cite[Theorem 3.1]{sb} the assumption
$$\limsup_{n\to\infty}\frac{K_nL^2_n}{N_n-1}<\pi$$
is unnecessary.
\end{remark}
\section{Tensorization}

\begin{theorem}[Tensorization] \label{tensorization}
Let $(\M_i,\mathsf{d}_i,\mathsf{m}_i)$ be non-branching metric measure spaces satisfying the reduced curvature-dimension condition $\mathsf{CD}^*(K,N_i)$ with two real parameters $K$ and $N_i\geq 1$ for $i=1,\dots,k$ with $k\in\N$. Then
$$(\mathsf{M},\mathsf{d},\mathsf{m}):=\bigotimes^k_{i=1}(\M_i,\mathsf{d}_i,\mathsf{m}_i)$$
fulfills $\mathsf{CD}^*\left(K,\sum^k_{i=1}N_i\right)$.
\end{theorem}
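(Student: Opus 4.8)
The plan is to verify the pointwise characterization of Proposition~\ref{nb}(iii), reducing everything first to the two–factor case and then, within that, first to product measures. Since the $k$–fold $\mathsf L_2$–product is the iterated two–fold product, by induction it suffices to treat $k=2$: given $(\M_1,\mathsf{d}_1,\mathsf{m}_1)$ satisfying $\mathsf{CD}^*(K,N_1)$ and $(\M_2,\mathsf{d}_2,\mathsf{m}_2)$ satisfying $\mathsf{CD}^*(K,N_2)$, one must show $\M:=\M_1\otimes\M_2$ satisfies $\mathsf{CD}^*(K,N)$ with $N:=N_1+N_2$. I would first record the structural facts needed: (a) the $\mathsf L_2$–product of two non–branching spaces is non–branching (a midpoint of $x=(x_1,x_2)$ and $y=(y_1,y_2)$ forces, by a short computation with $\mathsf{d}^2=\mathsf{d}_1^2+\mathsf{d}_2^2$ and the triangle inequality, each $z_i$ to be a midpoint of $x_i,y_i$); (b) each $\M_i$ has proper support by Theorem~\ref{bishop}/Remark~\ref{remark}(iv), hence so does $\M$ (closed bounded subsets of $\M$ lie in a product of closed balls, which is compact), so Proposition~\ref{nb} applies both to $\M$ and to the factors; (c) a constant–speed geodesic in $\M$ is precisely a pair of constant–speed geodesics in the factors, so the canonical geodesic selections $\gamma^i$ of Proposition~\ref{uni} combine to the canonical selection $\gamma_t(x,y)=(\gamma^1_t(x_1,y_1),\gamma^2_t(x_2,y_2))$ on $\M$; (d) for product measures the $\mathsf L_2$–Wasserstein distance and optimal couplings decouple, i.e. $\mathsf{d}_{\mathsf W}(\mu^1\otimes\mu^2,\nu^1\otimes\nu^2)^2=\mathsf{d}_{\mathsf W}(\mu^1,\nu^1)^2+\mathsf{d}_{\mathsf W}(\mu^2,\nu^2)^2$ and $\mathsf q^1\otimes\mathsf q^2$ is optimal whenever $\mathsf q^1,\mathsf q^2$ are.

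Two elementary inequalities enter. The first is a reformulation of Lemma~\ref{sigma}: since $\sigma^{(t)}_{K,N}(\theta)$ depends on $(K,N,\theta)$ only through $K\theta^2/N$, one may write $\sigma^{(t)}_{K,N}(\theta)^N=A(K\theta^2,N)$ for a single function $A$, and Lemma~\ref{sigma} then says exactly $A(p,N)\,A(p',N')\geq A(p+p',N+N')$ for all $p,p'$; applying this with $p=K\theta_1^2$, $p'=K\theta_2^2$ gives
$$\sigma^{(t)}_{K,N_1}(\theta_1)^{N_1}\,\sigma^{(t)}_{K,N_2}(\theta_2)^{N_2}\;\geq\;\sigma^{(t)}_{K,N_1+N_2}\big(\sqrt{\theta_1^2+\theta_2^2}\,\big)^{N_1+N_2}.$$
The second is the superadditivity of the weighted geometric mean: $(u_1+u_2)^{\alpha}(v_1+v_2)^{\beta}\geq u_1^{\alpha}v_1^{\beta}+u_2^{\alpha}v_2^{\beta}$ for $u_i,v_i\geq0$ and $\alpha,\beta\geq0$ with $\alpha+\beta=1$.

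Now the product–measure case. For $\nu_0=\nu_0^1\otimes\nu_0^2$ and $\nu_1=\nu_1^1\otimes\nu_1^2$ in $\mathcal P_\infty(\M)$ I would apply Proposition~\ref{nb}(iii) on each factor, obtaining optimal couplings $\mathsf q^i$ of $\nu_0^i,\nu_1^i$ and, for $\rho^i_t$ the density of $(\gamma^i_t)_*\mathsf q^i$, the inequality $\rho^i_t(\gamma^i_t(x_i,y_i))^{-1/N_i}\geq\sigma^{(1-t)}_{K,N_i}(\mathsf{d}_i(x_i,y_i))\rho^i_0(x_i)^{-1/N_i}+\sigma^{(t)}_{K,N_i}(\mathsf{d}_i(x_i,y_i))\rho^i_1(y_i)^{-1/N_i}$ for $\mathsf q^i$–a.e.\ $(x_i,y_i)$. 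Then $\mathsf q:=\mathsf q^1\otimes\mathsf q^2$ is optimal on $\M$, $\gamma_t$ is the canonical selection, and the density of $(\gamma_t)_*\mathsf q$ equals $\rho_t=\rho^1_t\otimes\rho^2_t$. Writing $\rho_t(\gamma_t(x,y))^{-1/N}=\big(\rho^1_t(\gamma^1_t)^{-1/N_1}\big)^{N_1/N}\big(\rho^2_t(\gamma^2_t)^{-1/N_2}\big)^{N_2/N}$, substituting the two factor inequalities, and then applying in turn the superadditivity of the geometric mean (with $\alpha=N_1/N$, $\beta=N_2/N$) and the coefficient inequality above (with $\theta_i=\mathsf{d}_i(x_i,y_i)$, so that $\sqrt{\theta_1^2+\theta_2^2}=\mathsf{d}(x,y)$ and $\rho^1_0\rho^2_0=\rho_0$, $\rho^1_1\rho^2_1=\rho_1$), one obtains
$$\rho_t(\gamma_t(x,y))^{-1/N}\;\geq\;\sigma^{(1-t)}_{K,N}(\mathsf{d}(x,y))\,\rho_0(x)^{-1/N}+\sigma^{(t)}_{K,N}(\mathsf{d}(x,y))\,\rho_1(y)^{-1/N}$$
for $\mathsf q$–a.e.\ $(x,y)$, which is precisely Proposition~\ref{nb}(iii) for $\M$ with parameters $(K,N)$ — established so far for product measures.

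It remains to remove the product assumption, and this is the main obstacle. The difficulty is genuine: optimal transport on $\M_1\otimes\M_2$ mixes the two factors — the $\M_1$–marginal of an optimal coupling on $\M$ need not be $\mathsf{d}_1^2$–optimal — so one cannot simply transport the marginals via the factor conditions and glue (such a glued plan may fail to be a $\mathsf L_2$–Wasserstein geodesic), nor can one analyse a genuine optimal coupling on $\M$ directly through the factor conditions. I would therefore proceed by approximation, in the spirit of the proofs of Proposition~\ref{nb}(ii)$\Rightarrow$(i) and Lemma~\ref{boundedsupport}: fix an optimal coupling $\tilde{\mathsf q}$ of $\nu_0,\nu_1$; for $\varepsilon>0$ choose finite partitions of the relevant bounded parts of $\M_1$ and $\M_2$ into sets of diameter $\leq\varepsilon$, inducing a partition of $\M$ into cells of diameter $\leq\varepsilon\sqrt2$; on each pair of cells replace the corresponding conditional piece of $\nu_0$ (resp.\ $\nu_1$) by the product of its two marginals, apply the product–measure case just proved (the small cell size making the relevant distortion coefficients constant up to an $\varepsilon$–error, exactly as in Proposition~\ref{nb}(ii)$\Rightarrow$(i)), and assemble the pieces using that they are mutually singular and hence, by Lemma~\ref{midmutsing} and non–branching, have mutually singular $t$–midpoints, so that the R\'enyi functionals add up. Letting $\varepsilon\to0$ and invoking properness (tightness of the assembled couplings and geodesics), the lower semicontinuity of $\mathsf S_{N'}$, and the compactness and stability arguments underlying Theorem~\ref{stab}, one extracts an optimal coupling and a geodesic between $\nu_0$ and $\nu_1$ satisfying (\ref{ren}) for all $N'\geq N$, which is $\mathsf{CD}^*(K,N)$. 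The two delicate points here are (i) controlling, as the cells shrink, the change in $\mathsf S_{N'}$ caused by replacing a measure on a small cell by the product of its marginals, and (ii) verifying that the assembled path is a genuine $\mathsf L_2$–Wasserstein geodesic; both are handled by the combination of small cell size, the mutual–singularity device, and the stability machinery.
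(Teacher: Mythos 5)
Your proposal is correct and follows the paper's proof closely: reduce to $k=2$, establish the pointwise inequality for product measures via Proposition~\ref{nb}(iii), Lemma~\ref{sigma} (with the reformulation in terms of $K\theta^2$ allowing different $\theta_i$ on each factor, which you correctly make explicit and the paper uses implicitly) together with H\"older, then handle arbitrary measures by approximation with sums of mutually singular product measures and Lemma~\ref{midmutsing}. The one step the paper makes explicit that you leave implicit at the end is Proposition~\ref{midpoints}: the approximation scheme naturally produces only an approximate midpoint $\eta^\varepsilon$ (controlled via $\mathsf{d_W}(\nu_0,\eta^\varepsilon)\leq\tfrac12\mathsf{d_W}(\nu_0,\nu_1)+3\varepsilon$ and its companion), which one passes to a genuine midpoint in the limit, and it is the midpoint characterization that then upgrades this to the full geodesic condition (\ref{ren}); phrasing the limit extraction as producing a geodesic directly, as you do, skips this last device.
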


\begin{proof}
Without restriction we assume that $k=2$. We consider $\nu_0=\rho_0\mathsf{m},\nu_1=\rho_1\mathsf{m}\in\mathcal{P}_\infty(\mathsf{M},\mathsf{d},\mathsf{m})$. In the first step, we treat the special case
$$\nu_0=\nu^{(1)}_0\otimes\nu^{(2)}_0 \quad \mathrm{and} \quad \nu_1=\nu^{(1)}_1\otimes\nu^{(2)}_1$$
with $\nu^{(i)}_0=\rho^{(i)}_0\mathsf{m}_i,\nu^{(i)}_1=\rho^{(i)}_1\mathsf{m}_i\in\mathcal{P}_\infty(\M_i,\mathsf{d}_i,\mathsf{m}_i)$ for $i=1,2$. According to our curvature assumption, there exists an optimal coupling $\mathsf{q}_i$ of $\nu^{(i)}_0$ and $\nu^{(i)}_1$ such that
\begin{align*}
\rho^{(i)}_t&\left(\gamma^{(i)}_t\left(x^{(i)}_0,x^{(i)}_1\right)\right)^{-1/N_i}\geq\\
&\geq\sigma^{(1-t)}_{K,N_i}\left(\mathsf{d}_i\left(x^{(i)}_0,x^{(i)}_1\right)\right)\rho^{(i)}_0\left(x^{(i)}_0\right)^{-1/N_i}+
\sigma^{(t)}_{K,N_i}\left(\mathsf{d}_i\left(x^{(i)}_0,x^{(i)}_1\right)\right)\rho^{(i)}_1\left(x^{(i)}_1\right)^{-1/N_i}
\end{align*}
for all $t\in [0,1]$ and $\mathsf{q}_i$-almost every $\left(x^{(i)}_0,x^{(i)}_1\right)\in\M_i\times\M_i$ with $i=1,2$. As in Proposition \ref{nb}, for all $t\in[0,1]$, $\rho^{(i)}_t$ denotes the density with respect to $\mathsf{m}_i$ of the push-forward measure of $\mathsf{q}_i$ under the
map $\left(x^{(i)}_0,x^{(i)}_1\right)\mapsto\gamma^{(i)}_t\left(x^{(i)}_0,x^{(i)}_1\right)$ for $i=1,2$.
We introduce the map
\begin{align*}
\mathsf{T}:\mathsf{M}_1\times\mathsf{M}_1\times\mathsf{M}_2\times\mathsf{M}_2&\rightarrow\mathsf{M}_1\times\mathsf{M}_2\times\mathsf{M}_1\times\mathsf{M}_2
=\mathsf{M}\times\mathsf{M}\\
\left(x^{(1)}_0,x^{(1)}_1,x^{(2)}_0,x^{(2)}_1\right)&\mapsto\left(x^{(1)}_0,x^{(2)}_0,x^{(1)}_1,x^{(2)}_1\right),
\end{align*}
we put $\mathsf{\tilde q}:=\mathsf{q}_1\otimes\mathsf{q}_2$ and define $\mathsf{q}$ as the push-forward measure of $\mathsf{\tilde q}$ under the map $\mathsf{T}$, that means $\mathsf{q}:=\mathsf{T}_*\mathsf{\tilde q}$. Then $\mathsf{q}$ is an optimal coupling of $\nu_0$ and $\nu_1$ and for all $t\in[0,1]$, $\rho_t(x,y):=\rho^{(1)}_t(x)\cdot\rho^{(2)}_t(y)$ is the density with respect to $\mathsf{m}$ of the push-forward measure of $\mathsf{q}$ under the map
\begin{align*}
\gamma_t:\mathsf{M}\times\mathsf{M}&\rightarrow\mathsf{M}=\mathsf{M}_1\times\mathsf{M}_2\\
\left(x^{(1)}_0,x^{(2)}_0,x^{(1)}_1,x^{(2)}_1\right)&\mapsto\left(\gamma^{(1)}_t\left(x^{(1)}_0,x^{(1)}_1\right),\gamma^{(2)}_t\left(x^{(2)}_0,x^{(2)}_1\right)\right).
\end{align*}
Moreover, for $\mathsf{q}$-almost every $x_0=\left(x^{(1)}_0,x^{(2)}_0\right),x_1=\left(x^{(1)}_1,x^{(2)}_1\right)\in\mathsf{M}$ and all $t\in[0,1]$, it holds that
\begin{align*}
&\sigma^{(1-t)}_{K,N_1+N_2}(\mathsf{d}(x_0,x_1))\rho_0(x_0)^{-1/(N_1+N_2)}+\sigma^{(t)}_{K,N_1+N_2}(\mathsf{d}(x_0,x_1))\rho_1(x_1)^{-1/(N_1+N_2)}=\\
&=\sigma^{(1-t)}_{K,N_1+N_2}(\mathsf{d}(x_0,x_1))\rho^{(1)}_0\left(x^{(1)}_0\right)^{-1/(N_1+N_2)}\cdot\rho^{(2)}_0\left(x^{(2)}_0\right)^{-1/(N_1+N_2)}\quad+\\
&\hspace{3cm}+\sigma^{(t)}_{K,N_1+N_2}(\mathsf{d}(x_0,x_1))\rho^{(1)}_1\left(x^{(1)}_1\right)^{-1/(N_1+N_2)}\cdot\rho^{(2)}_1\left(x^{(2)}_1\right)^{-1/(N_1+N_2)}\\
&\leq\prod^2_{i=1}\sigma^{(1-t)}_{K,N_i}\left(\mathsf{d}_i\left(x^{(i)}_0,x^{(i)}_1\right)\right)^{N_i/(N_1+N_2)}\rho^{(i)}_0\left(x^{(i)}_0\right)^{-1/(N_1+N_2)}\quad+\\
&\hspace{3cm}+\prod^2_{i=1}\sigma^{(t)}_{K,N_i}\left(\mathsf{d}_i\left(x^{(i)}_0,x^{(i)}_1\right)\right)^{N_i/(N_1+N_2)}\rho^{(i)}_1\left(x^{(i)}_1\right)^{-1/(N_1+N_2)}\\
&\leq\prod^2_{i=1}\biggl[\sigma^{(1-t)}_{K,N_i}\left(\mathsf{d}_i\left(x^{(i)}_0,x^{(i)}_1\right)\right)\rho^{(i)}_0\left(x^{(i)}_0\right)^{-1/N_i}+\\
&\hspace{3cm}+\sigma^{(t)}_{K,N_i}\left(\mathsf{d}_i\left(x^{(i)}_0,x^{(i)}_1\right)\right)\rho^{(i)}_1\left(x^{(i)}_1\right)^{-1/N_i}
\biggr]^{N_i/(N_1+N_2)}\\
&\leq\prod^2_{i=1}\rho^{(i)}_t\left(\gamma^{(i)}_t\left(x^{(i)}_0,x^{(i)}_1\right)\right)^{-1/(N_1+N_2)}\\
&=\rho_t\left(\gamma^{(1)}_t\left(x^{(1)}_0,x^{(1)}_1\right),\gamma^{(2)}_t\left(x^{(2)}_0,x^{(2)}_1\right)\right)^{-1/(N_1+N_2)}=\rho_t\left(\gamma_t(x_0,x_1)\right)^{-1/(N_1+N_2)}.
\end{align*}
In this chain of inequalities, the second one follows from Lemma \ref{sigma} and the third one from H\"older's inequality.

\bigskip

In the second step, we consider $o\in\mathsf{supp(\mathsf{m})}$ and $R>0$ and set $M_b:=B_R(o)\cap\mathsf{supp(\mathsf{m})}$ as well as $M_{c}:=\overline{B_{2R}(o)}\cap\mathsf{supp(\mathsf{m})}$. We consider arbitrary probability measures $\nu_0,\nu_1\in\mathcal{P}_\infty(M_b,\mathsf{d},\mathsf{m})$ and $\varepsilon>0$. There exist
$$\nu^\varepsilon_0=\rho^\varepsilon_0m=\frac{1}{n}\sum^n_{j=1}\nu^\varepsilon_{0,j}$$
with mutually singular product measures $\nu^\varepsilon_{0,j}$ and
$$\nu^\varepsilon_1=\rho^\varepsilon_1m=\frac{1}{n}\sum^n_{j=1}\nu^\varepsilon_{1,j}$$
with mutually singular product measures $\nu^\varepsilon_{1,j}$ for $j=1,\dots,n$ and $n\in\N$ such that
\begin{align*}
\mathsf{S}_{N_1+N_2}\left(\nu^\varepsilon_0|\mathsf{m}\right)&\leq\mathsf{S}_{N_1+N_2}(\nu_0|\mathsf{m})+\varepsilon,\\
\mathsf{S}_{N_1+N_2}\left(\nu^\varepsilon_1|\mathsf{m}\right)&\leq\mathsf{S}_{N_1+N_2}(\nu_1|\mathsf{m})+\varepsilon
\end{align*}
as well as
$$\mathsf{d_W}\left(\nu_0,\nu^\varepsilon_0\right)\leq\varepsilon,\quad\mathsf{d_W}\left(\nu_1,\nu^\varepsilon_1\right)\leq\varepsilon$$
and
$$\mathsf{d_W}\left(\nu^\varepsilon_0,\nu^\varepsilon_1\right)\geq\left[\frac{1}{n}\sum^n_{j=1}\mathsf{d}^2_\mathsf{W}\left(\nu^\varepsilon_{0,j},\nu^\varepsilon_{1,j}\right)\right]^{1/2}-\varepsilon.$$
Moreover,
\begin{align*}
\theta&:=
\begin{cases}
\underset{x_1\in\mathrm{supp}(\nu_1)}{\underset{x_0\in\mathrm{supp}(\nu_0),}{\inf}}\mathsf{d}(x_0,x_1)\leq\underset{x_1\in\mathrm{supp}\left(\nu^\varepsilon_{1,j}\right)}{\underset{x_0\in\mathrm{supp}\left(\nu^\varepsilon_{0,j}\right),}{\inf}}\mathsf{d}(x_0,x_1),& \text{if $K\geq 0$},\\
\underset{x_1\in\mathrm{supp}(\nu_1)}{\underset{x_0\in\mathrm{supp}(\nu_0),}{\sup}}\mathsf{d}(x_0,x_1)\geq\underset{x_1\in\mathrm{supp}\left(\nu^\varepsilon_{1,j}\right)}{\underset{x_0\in\mathrm{supp}\left(\nu^\varepsilon_{0,j}\right),}{\sup}}\mathsf{d}(x_0,x_1),& \text{if $K<0$}.
\end{cases}
\end{align*}
Since $\nu^\varepsilon_0$ is the sum of mutually singular measures $\nu^\varepsilon_{0,j}$ for $j=1,\dots,n$,
$$\mathsf{S}_{N_1+N_2}\left(\nu^\varepsilon_0|\mathsf{m}\right)=\left(\frac{1}{n}\right)^{1-1/(N_1+N_2)}\sum^n_{j=1}\mathsf{S}_{N_1+N_2}\left(\nu^\varepsilon_{0,j}|\mathsf{m}\right)$$
and analogously,
$$\mathsf{S}_{N_1+N_2}\left(\nu^\varepsilon_1|\mathsf{m}\right)=\left(\frac{1}{n}\right)^{1-1/(N_1+N_2)}\sum^n_{j=1}\mathsf{S}_{N_1+N_2}\left(\nu^\varepsilon_{1,j}|\mathsf{m}\right).$$
Due to the first step, for each $j=1,\dots,n$ there exists a midpoint $\eta^\varepsilon_j\in\mathcal{P}_\infty(M_c,\mathsf{d},\mathsf{m})$ of $\nu^\varepsilon_{0,j}$ and $\nu^\varepsilon_{1,j}$ satisfying
$$\mathsf{S}_{N_1+N_2}\left(\eta^\varepsilon_j|\mathsf{m}\right)\leq
\sigma^{(1/2)}_{K,N_1+N_2}(\theta)\mathsf{S}_{N_1+N_2}\left(\nu^\varepsilon_{0,j}|\mathsf{m}\right)+
\sigma^{(1/2)}_{K,N_1+N_2}(\theta)\mathsf{S}_{N_1+N_2}\left(\nu^\varepsilon_{1,j}|\mathsf{m}\right).$$
Since $\mathsf{M}$ is non-branching and since the measures $\nu^\varepsilon_{0,j}$ for $j=1,\dots,n$ are mutually singular, also the $\eta^\varepsilon_j$ are mutually singular for $j=1,\dots,n$ -- we refer to Lemma \ref{midmutsing}. Therefore,
$$\eta^\varepsilon:=\frac{1}{n}\sum^n_{j=1}\eta^\varepsilon_j$$
satisfies
$$\mathsf{S}_{N_1+N_2}\left(\eta^\varepsilon|\mathsf{m}\right)=\left(\frac{1}{n}\right)^{1-1/(N_1+N_2)}\sum^n_{j=1}\mathsf{S}_{N_1+N_2}\left(\eta^\varepsilon_j|\mathsf{m}\right)$$
and consequently,
\begin{align*}
\mathsf{S}_{N_1+N_2}\left(\eta^\varepsilon|\mathsf{m}\right)
&\leq\sigma^{(1/2)}_{K,N_1+N_2}(\theta)\mathsf{S}_{N_1+N_2}\left(\nu^\varepsilon_0|\mathsf{m}\right)+
\sigma^{(1/2)}_{K,N_1+N_2}(\theta)\mathsf{S}_{N_1+N_2}\left(\nu^\varepsilon_1|\mathsf{m}\right)\\
&\leq\sigma^{(1/2)}_{K,N_1+N_2}(\theta)\mathsf{S}_{N_1+N_2}\left(\nu_0|\mathsf{m}\right)+
\sigma^{(1/2)}_{K,N_1+N_2}(\theta)\mathsf{S}_{N_1+N_2}\left(\nu_1|\mathsf{m}\right)+2\varepsilon.
\end{align*}
Moreover, $\eta^\varepsilon$ is an approximate midpoint of $\nu_0$ and $\nu_1$,
\begin{align*}
\mathsf{d_W}\left(\nu_0,\eta^\varepsilon\right)
\leq\mathsf{d_W}\left(\nu^\varepsilon_0,\eta^\varepsilon\right)+\varepsilon
&\leq\left[\frac{1}{n}\sum^n_{j=1}\mathsf{d}^2_\mathsf{W}\left(\nu^\varepsilon_{0,j},\eta^\varepsilon_j\right)\right]^{1/2}+\varepsilon\\
&\leq\frac{1}{2}\mathsf{d_W}\left(\nu^\varepsilon_0,\nu^\varepsilon_1\right)+2\varepsilon
\leq\frac{1}{2}\mathsf{d_W}\left(\nu_0,\nu_1\right)+3\varepsilon,
\end{align*}
a similar calculation holds true for $\mathsf{d_W}\left(\eta^\varepsilon,\nu_1\right)$. According to the compactness of $(M_c,\mathsf{d})$, the family $\left\{\eta^\varepsilon:\varepsilon>0\right\}$ of approximate midpoints is tight. Hence, there exists a suitable subsequence $\left(\eta^{\varepsilon_k}\right)_{k\in\N}$ converging to some $\eta\in\mathcal{P}_\infty(M_c,\mathsf{d},\mathsf{m})$. Continuity of the Wasserstein distance $\mathsf{d_W}$ and lower semi-continuity of the R\'enyi entropy functional $\mathsf{S}_{N_1+N_2}(\cdot|\mathsf{m})$ imply that $\eta$ is a midpoint of $\nu_0$ and $\nu_1$ and that
$$\mathsf{S}_{N_1+N_2}\left(\eta|\mathsf{m}\right)
\leq\sigma^{(1/2)}_{K,N_1+N_2}(\theta)\mathsf{S}_{N_1+N_2}\left(\nu_0|\mathsf{m}\right)+
\sigma^{(1/2)}_{K,N_1+N_2}(\theta)\mathsf{S}_{N_1+N_2}\left(\nu_1|\mathsf{m}\right).$$
Applying Proposition \ref{midpoints} finally yields the claim.
\end{proof}

\section{From Local to Global}

\begin{theorem}[$\mathsf{CD}^*_{\mathsf{loc}}(K,N)$ $\Leftrightarrow$ $\mathsf{CD}^*(K,N)$] \label{glob}
Let $K,N\in\R$ with $N\geq 1$ and let $(\mathsf{M},\mathsf{d},\mathsf{m})$ be a non-branching metric measure space. We assume additionally that $\mathcal{P}_\infty(\mathsf{M},\mathsf{d},\mathsf{m})$ is a geodesic space. Then $(\mathsf{M},\mathsf{d},\mathsf{m})$ satisfies $\mathsf{CD}^*(K,N)$ globally if and only if it satisfies $\mathsf{CD}^*(K,N)$ locally.
\end{theorem}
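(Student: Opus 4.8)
The plan is to prove the nontrivial implication $\mathsf{CD}^*_{\mathsf{loc}}(K,N)\Rightarrow\mathsf{CD}^*(K,N)$ (the converse is immediate). The overall strategy follows the classical localization scheme for synthetic Ricci bounds: given $\nu_0,\nu_1\in\mathcal{P}_\infty(\mathsf{M},\mathsf{d},\mathsf{m})$, one wants to construct a geodesic $\Gamma$ in $\mathcal{P}_\infty(\mathsf{M},\mathsf{d},\mathsf{m})$ along which the reduced convexity inequality \eqref{ren} holds, by subdividing the transport into many short pieces, each lying in a single chart $M(x)$ where $\mathsf{CD}^*_{\mathsf{loc}}(K,N)$ applies. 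Because the characterizing inequality of $\mathsf{CD}^*(K,N)$ (via Proposition \ref{nb}(ii) and Proposition \ref{midpoints}) only involves the single number $\theta$ defined in \eqref{theta}, the right object to propagate is a midpoint estimate, and the natural vehicle is a ``bootstrapping along the geodesic'' argument. First I would pass, using Remark \ref{remark}(iv) and the Bishop--Gromov/Bonnet--Myers consequences cited as Theorem \ref{bishop} and Corollary \ref{bonnet}, to the setting where $\mathsf{supp}(\mathsf{m})$ is proper; then I would use Proposition \ref{midpoints} to reduce the global statement to producing, for every pair of measures, a midpoint with the $\sigma^{(1/2)}_{K,N'}(\theta)$-estimate.

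The core of the argument is an induction on scale. Fix $\nu_0,\nu_1$ with bounded supports; since $\mathcal{P}_\infty(\mathsf{M},\mathsf{d},\mathsf{m})$ is assumed geodesic, choose a geodesic $(\mu_s)_{s\in[0,1]}$ joining them, lying in a bounded region $M_c$ whose compactness gives a finite subcover by charts $M(x_1),\dots,M(x_m)$ with a common Lebesgue number. Choose $L\in\mathbb{N}$ large enough that any two measures $\mu_s,\mu_{s'}$ with $|s-s'|\le 1/L$ lie in a common chart. On each short interval $[\,(l-1)/L,\,l/L\,]$, apply the local condition to get a geodesic with the $\sigma^{(t)}_{K,N'}$-estimate; one then has to glue these $L$ short geodesics into one geodesic on $[0,1]$ and, crucially, upgrade the collection of local ``short-scale'' convexity estimates into the single global estimate with parameter $\theta$. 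Here the algebraic identity proved inside Proposition \ref{midpoints} — that the $\sigma^{(1/2)}_{K,N'}$-midpoint estimates iterate cleanly, with the prefactors recombining via the product-to-sum formula for $\sin$ into exactly $\sigma^{(1-l2^{-k})}_{K,N'}$ and $\sigma^{(l2^{-k})}_{K,N'}$ — is what makes the bookkeeping close. One subtlety demanding care: in the local condition (Definition \ref{loccurv}(ii), cf.\ Remark \ref{remark}(iii)) the interpolating measures need not stay inside the chart, so to recurse one must re-localize after each refinement step, which is where properness and the finiteness of the subcover are used repeatedly; the mutual-singularity device of Lemma \ref{midmutsing}, combined with the non-branching hypothesis, is what lets one split a general $\nu_i$ into small pieces each supported in a single chart and then reassemble the midpoints additively, as was done in the proofs of Proposition \ref{nb} and Theorem \ref{tensorization}.

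Concretely the steps are: (1) reduce to proper support and, via Proposition \ref{midpoints}, to constructing good midpoints; (2) using Lemma \ref{midmutsing} and non-branching, reduce to measures $\nu_0,\nu_1$ supported in sets of arbitrarily small diameter, so that together with a connecting Wasserstein geodesic everything sits in a single chart at the finest scale; (3) for such measures run the dyadic midpoint construction of Proposition \ref{midpoints} but with each elementary midpoint provided by $\mathsf{CD}^*_{\mathsf{loc}}(K,N)$ applied in an appropriate chart, checking at each refinement level that the relevant pair of measures is still chart-contained (this is the inductive heart); (4) the trigonometric recombination identity from Proposition \ref{midpoints} promotes the level-$k$ estimates to the global $\sigma^{(t)}_{K,N'}(\theta)$-inequality for dyadic $t$, and lower semicontinuity of $\mathsf{S}_{N'}(\cdot|\mathsf{m})$ together with properness passes to a genuine geodesic $\Gamma$; (5) finally remove the small-diameter restriction by the singular-decomposition-and-reassembly argument, exactly as in the proof of Theorem \ref{tensorization}, using Lemma \ref{midmutsing} to add up the component midpoints and the tightness coming from properness of $M_c$ to extract a limit midpoint satisfying \eqref{convmid}, whence $\mathsf{CD}^*(K,N)$ via Proposition \ref{midpoints}. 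The main obstacle I anticipate is step (3): ensuring that the measures arising at each dyadic level genuinely lie in a common chart despite the interpolants leaving the charts — this forces a careful choice of the subdivision parameter $L$ relative to the Lebesgue number of the cover, and a delicate induction in which the charts used change from level to level. The second delicate point is verifying that the pieced-together $\Gamma$ is actually a geodesic in $\mathcal{P}_\infty$ (not merely a curve realizing the entropy bound), for which the hypothesis that $\mathcal{P}_\infty(\mathsf{M},\mathsf{d},\mathsf{m})$ is geodesic is exactly what one leans on.
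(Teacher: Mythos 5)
Your proposal correctly identifies several of the ingredients: reducing to proper support, using Proposition~\ref{midpoints} to work with midpoints rather than full geodesics, using Lemma~\ref{midmutsing} to split measures into mutually singular pieces and reassemble entropies additively, and finishing with lower semicontinuity of $\mathsf{S}_{N'}(\cdot|\mathsf{m})$ plus tightness. But there are two structural errors that would prevent the argument from closing.

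First, the decomposition step is applied to the wrong objects. In steps (2) and (5) you propose to decompose the \emph{terminal} measures $\nu_0,\nu_1$ into pieces of small support and thereby reduce to the case where ``everything sits in a single chart.'' This cannot work: an optimal plan pairs a small-support piece of $\nu_0$ with a small-support piece of $\nu_1$ that can be located arbitrarily far away, so the Wasserstein geodesic joining them leaves every chart. The decomposition that the paper actually performs is applied to the \emph{intermediate} measures $\Gamma(s)$ and $\Gamma(t)$ at two \emph{adjacent} dyadic times $s,t$ on a prechosen geodesic; then the pairs $(\Gamma_j(s),\Gamma_j(t))$ can be made chart-contained precisely because for such adjacent times every particle is transported only a distance $\leq 2^{-\kappa}\mathsf{diam}(M_c)\leq\lambda$, as in display~\eqref{lambda}. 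This is the content of the paper's second claim ($\mathsf{C(\kappa)}$), and you cannot get it by cutting $\nu_0,\nu_1$ into small pieces.

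Second — and this is the more serious gap — you conflate the dyadic construction of Proposition~\ref{midpoints} with the genuinely new iteration the theorem requires. In Proposition~\ref{midpoints} the product-to-sum identity for $\sin$ recombines correctly only because the inductive hypothesis at level $k-1$ already carries the \emph{global} parameter $\theta$; plugging in local midpoints, which only carry the scaled parameter $2^{-k}\theta$, into that induction does not produce the global estimate. The paper's first claim ($\mathsf{C(k)}\Rightarrow\mathsf{C(k-1)}$) is precisely the device that upgrades a $\sigma^{(1/2)}_{K,N'}\!\left(\tfrac{1}{2}\theta\right)$-midpoint estimate to a $\sigma^{(1/2)}_{K,N'}(\theta)$-midpoint estimate. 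This is done by an \emph{infinite} alternating iteration at a fixed pair of dyadic times, with the improvement encoded in a geometric series: using $\sigma^{(1/2)}_{K,N'}\!\left(\tfrac{1}{2}\theta\right)=\tfrac{1}{2\cos(\tfrac14\theta\sqrt{K/N'})}$ one checks $\tfrac12\sum_{k\geq1}\bigl(2\sigma^{(1/2)}_{K,N'}(\tfrac12\theta)^2\bigr)^k=\sigma^{(1/2)}_{K,N'}(\theta)$, with the residual term $2^i\sigma^{(1/2)}_{K,N'}(\tfrac12\theta)^{2i}\mathsf{S}_{N'}(\Gamma(s+2^{-k})|\mathsf{m})\to0$. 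That identity is not the same as the level-passing identity in Proposition~\ref{midpoints}, and without it your step (4) does not produce the needed coefficient. In short, your proposal has the right scaffolding but is missing the bootstrap that makes the constants come out right.
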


\begin{proof}
Note that in any case, $\mathsf{supp}(\m)$ will be proper: The fact that $\mathcal{P}_\infty (\mathsf{M},\mathsf{d},\mathsf{m})$ is a geodesic space implies that $\mathsf{supp}(\m)$ is a length space. Combined with its local compactness due to Remark \ref{remark}(iv), this yields the properness of $\mathsf{supp}(\m)$.

\bigskip

We confine ourselves to treating the case $K>0$. The general one follows by analogous calculations.

For each number $k\in\N\cup\{0\}$ we define a set $I_k$ of points in time,
$$I_k:=\{l2^{-k}: l=0,\dots,2^k\}.$$

For a given geodesic $\Gamma:[0,1]\rightarrow\mathcal{P}_\infty(\mathsf{M},\mathsf{d},\mathsf{m})$ we denote by $\mathcal{G}^\Gamma_k$ the set of all geodesics $[x]:=(x_t)_{0\leq t\leq 1}$ in $\mathsf{M}$ satisfying $x_t\in\mathsf{supp}(\Gamma(t))=:\mathcal{S}_t$ for all $t\in I_k$.

We consider $o\in\mathsf{supp(\mathsf{m})}$ and $R>0$ and set $M_b:=B_R(o)\cap\mathsf{supp(\mathsf{m})}$ as well as $M_c:=\overline{B_{2R}(o)}\cap\mathsf{supp(\mathsf{m})}$. Now, we formulate a property $\mathsf{C(k)}$ for every $k\in\N\cup\{0\}$:

\bigskip

$\mathsf{C(k)}$: For each geodesic $\Gamma:[0,1]\rightarrow\mathcal{P}_\infty(\mathsf{M},\mathsf{d},\mathsf{m})$ satisfying
$\Gamma(0),\Gamma(1)\in\mathcal{P}_\infty(M_b,\mathsf{d},\mathsf{m})$ and for each pair $s,t\in I_k$ with $t-s=2^{-k}$
there exists a midpoint $\eta(s,t)\in\mathcal{P}_\infty(\mathsf{M},\mathsf{d},\mathsf{m})$ of $\Gamma(s)$ and $\Gamma(t)$ such that
$$\mathsf{S}_{N'}(\eta(s,t)|\mathsf{m})\leq\sigma^{(1/2)}_{K,N'}(\theta_{s,t})\mathsf{S}_{N'}(\Gamma(s)|\mathsf{m})+
\sigma^{(1/2)}_{K,N'}(\theta_{s,t})\mathsf{S}_{N'}(\Gamma(t)|\mathsf{m}),$$
for all $N'\geq N$ where
$$\theta_{s,t}:=\inf_{[x]\in\mathcal{G}^\Gamma_k}\mathsf{d}(x_s,x_t).$$

Our first claim is:
\begin{claim}
For each $k\in\N$, $\mathsf{C(k)}$ implies $\mathsf{C(k-1)}$.
\end{claim}

In order to prove this claim, let $k\in\N$ with property $\mathsf{C(k)}$ be given. Moreover, let a geodesic $\Gamma$ in $\mathcal{P}_\infty(\mathsf{M},\mathsf{d},\mathsf{m})$ satisfying $\Gamma(0),\Gamma(1)\in\mathcal{P}_\infty(M_b,\mathsf{d},\mathsf{m})$ and numbers $s,t\in I_{k-1}$ with $t-s=2^{1-k}$ be given. We put $\theta:=\inf_{[x]\in\mathcal{G}^\Gamma_{k-1}}\mathsf{d}(x_s,x_t)$, and we define iteratively a sequence
$(\Gamma^{(i)})_{i\in\N\cup\{0\}}$ of geodesics in $\mathcal{P}_\infty(M_c,\mathsf{d},\mathsf{m})$ coinciding with $\Gamma$ on $[0,s]\cup [t,1]$ as follows:

Start with $\Gamma^{(0)}:=\Gamma$.
Assuming that $\Gamma^{(2i)}$ is already given, let $\Gamma^{(2i+1)}$ be any geodesic in $\mathcal{P}_\infty(M_c,\mathsf{d},\mathsf{m})$ which coincides with $\Gamma$ on $[0,s]\cup [t,1]$, for which
$\Gamma^{(2i+1)}\left(s+2^{-(k+1)}\right)$ is a midpoint of $\Gamma(s)=\Gamma^{(2i)}(s)$ and $\Gamma^{(2i)}\left(s+2^{-k}\right)$ and for which
$\Gamma^{(2i+1)}\left(s+3\cdot2^{-(k+1)}\right)$ is a midpoint of $\Gamma^{(2i)}\left(s+2^{-k}\right)$ and $\Gamma(t)=\Gamma^{(2i)}(t)$ satisfying
\begin{align*}
\mathsf{S}_{N'}&\left(\Gamma^{(2i+1)}\left(s+2^{-(k+1)}\right)|\mathsf{m}\right)\leq\\
&\leq\sigma^{(1/2)}_{K,N'}\left(\theta^{(2i+1)}\right)\mathsf{S}_{N'}(\Gamma(s)|\mathsf{m})+
\sigma^{(1/2)}_{K,N'}\left(\theta^{(2i+1)}\right)\mathsf{S}_{N'}\left(\Gamma^{(2i)}\left(s+2^{-k}\right)|\mathsf{m}\right)
\end{align*}
for all $N'\geq N$ where
$$\theta^{(2i+1)}:=\inf_{[x]\in\mathcal{G}^{\Gamma^{(2i)}}_k}\mathsf{d}\left(x_s,x_{s+2^{-k}}\right)\geq\tfrac{1}{2}\theta,$$
that is,
\begin{align*}
\mathsf{S}_{N'}&\left(\Gamma^{(2i+1)}\left(s+2^{-(k+1)}\right)|\mathsf{m}\right)\leq\\
&\hspace{2cm}\leq\sigma^{(1/2)}_{K,N'}\left(\tfrac{1}{2}\theta\right)\mathsf{S}_{N'}(\Gamma(s)|\mathsf{m})+
\sigma^{(1/2)}_{K,N'}\left(\tfrac{1}{2}\theta\right)\mathsf{S}_{N'}\left(\Gamma^{(2i)}\left(s+2^{-k}\right)|\mathsf{m}\right)
\end{align*}
for all $N'\geq N$ and accordingly,
\begin{align*}
\mathsf{S}_{N'}&\left(\Gamma^{(2i+1)}\left(s+3\cdot2^{-(k+1)}\right)|\mathsf{m}\right)\leq\\
&\hspace{2cm}\leq\sigma^{(1/2)}_{K,N'}\left(\tfrac{1}{2}\theta\right)\mathsf{S}_{N'}\left(\Gamma^{(2i)}\left(s+2^{-k}\right)|\mathsf{m}\right)
+\sigma^{(1/2)}_{K,N'}\left(\tfrac{1}{2}\theta\right)\mathsf{S}_{N'}(\Gamma(t)|\mathsf{m})
\end{align*}
for all $N'\geq N$. Such midpoints exist due to $\mathsf{C(k)}$.

Now let $\Gamma^{(2i+2)}$ be any geodesic
in $\mathcal{P}_\infty(M_c,\mathsf{d},\mathsf{m})$ which coincides with $\Gamma^{(2i+1)}$ on $[0,s+2^{-(k+1)}]\cup [s+3\cdot2^{-(k+1)},1]$ and
for which $\Gamma^{(2i+2)}\left(s+2^{-k}\right)$ is a midpoint of $\Gamma^{(2i+1)}\left(s+2^{-(k+1)}\right)$
and $\Gamma^{(2i+1)}\left(s+3\cdot2^{-(k+1)}\right)$ satisfying
\begin{align*}
\mathsf{S}_{N'}&\left(\Gamma^{(2i+2)}\left(s+2^{-k}\right)|\mathsf{m}\right)\leq\\
&\leq\sigma^{(1/2)}_{K,N'}\left(\tfrac{1}{2}\theta\right)\mathsf{S}_{N'}\left(\Gamma^{(2i+1)}\left(s+2^{-(k+1)}\right)|\mathsf{m}\right)
+\\
&\hspace{4cm}+\sigma^{(1/2)}_{K,N'}\left(\tfrac{1}{2}\theta\right)\mathsf{S}_{N'}\left(\Gamma^{(2i+1)}\left(s+3\cdot2^{-(k+1)}\right)|\mathsf{m}\right)
\end{align*}
for all $N'\geq N$. Again such a midpoint exists according to $\mathsf{C(k)}$. This yields a sequence $(\Gamma^{(i)})_{i\in\N\cup\{0\}}$ of geodesics. Combining the above inequalities yields
\begin{align*}
\mathsf{S}_{N'}&\left(\Gamma^{(2i+2)}\left(s+2^{-k}\right)|\mathsf{m}\right)\leq\\
&\leq2\sigma^{(1/2)}_{K,N'}\left(\tfrac{1}{2}\theta\right)^2\mathsf{S}_{N'}\left(\Gamma^{(2i)}\left(s+2^{-k}\right)|\mathsf{m}\right)+\\
&\hspace{4cm}+\sigma^{(1/2)}_{K,N'}\left(\tfrac{1}{2}\theta\right)^2\mathsf{S}_{N'}\left(\Gamma(s)|\mathsf{m}\right)
+\sigma^{(1/2)}_{K,N'}\left(\tfrac{1}{2}\theta\right)^2\mathsf{S}_{N'}\left(\Gamma(t)|\mathsf{m}\right)
\end{align*}
and by iteration,
\begin{align*}
\mathsf{S}_{N'}&\left(\Gamma^{(2i)}\left(s+2^{-k}\right)|\mathsf{m}\right)\leq\\
&\leq 2^i\sigma^{(1/2)}_{K,N'}\left(\tfrac{1}{2}\theta\right)^{2i}\mathsf{S}_{N'}\left(\Gamma\left(s+2^{-k}\right)|\mathsf{m}\right)+\\
&\hspace{4cm}+\tfrac{1}{2}\sum^i_{k=1}\left(2\sigma^{(1/2)}_{K,N'}\left(\tfrac{1}{2}\theta\right)^2\right)^k\left[\mathsf{S}_{N'}\left(\Gamma(s)|\mathsf{m}\right)
+\mathsf{S}_{N'}\left(\Gamma(t)|\mathsf{m}\right)\right]
\end{align*}
for all $N'\geq N$.

By compactness of $\mathcal{P}(M_c,\mathsf{d})$, there exists a suitable subsequence of $\left(\Gamma^{(2i)}\left(s+2^{-k}\right)\right)_{i\in\N\cup\{0\}}$
converging to some $\eta\in\mathcal{P}(M_c,\mathsf{d})$. Continuity of the distance implies that $\eta$ is a midpoint of $\Gamma(s)$ and $\Gamma(t)$
and the lower semi-continuity of the R\'enyi entropy functional implies
$$\mathsf{S}_{N'}(\eta|\mathsf{m})\leq\sigma^{(1/2)}_{K,N'}(\theta)\mathsf{S}_{N'}(\Gamma(s)|\mathsf{m})+\sigma^{(1/2)}_{K,N'}(\theta)\mathsf{S}_{N'}(\Gamma(t)|\mathsf{m})$$
for all $N'\geq N$. This proves property $\mathsf{C(k-1)}$. At this point, we do not want to suppress the calculations leading to this last implication: For all $N'\geq N$, we have
\begin{align*}
\sigma^{(1/2)}_{K,N'}\left(\tfrac{1}{2}\theta\right)&=\frac{\sin\left(\tfrac{1}{4}\theta\sqrt{K/N'}\right)}{\sin\left(\tfrac{1}{2}\theta\sqrt{K/N'}\right)}
=\frac{\sin\left(\tfrac{1}{4}\theta\sqrt{K/N'}\right)}{2\sin\left(\tfrac{1}{4}\theta\sqrt{K/N'}\right)\cos\left(\tfrac{1}{4}\theta\sqrt{K/N'}\right)}\\
&=\frac{1}{2\cos\left(\tfrac{1}{4}\theta\sqrt{K/N'}\right)}.
\end{align*}
In the case $2\sigma^{(1/2)}_{K,N'}\left(\tfrac{1}{2}\theta\right)^2<1$,
\begin{align*}
\tfrac{1}{2}\lim_{i\to\infty}\sum^i_{k=1}\left(2\sigma^{(1/2)}_{K,N'}\left(\tfrac{1}{2}\theta\right)^2\right)^k&
=\tfrac{1}{2}\left[\left(1-2\sigma^{(1/2)}_{K,N'}\left(\tfrac{1}{2}\theta\right)^2\right)^{-1}-1\right]\\
&=\tfrac{1}{2}\left[\left(\frac{2\cos^2\left(\tfrac{1}{4}\theta\sqrt{K/N'}\right)-1}{2\cos^2\left(\tfrac{1}{4}\theta\sqrt{K/N'}\right)}\right)^{-1}-1\right]\\
&=\tfrac{1}{2}\left[\frac{2\cos^2\left(\tfrac{1}{4}\theta\sqrt{K/N'}\right)}{\cos\left(\tfrac{1}{2}\theta\sqrt{K/N'}\right)}-1\right]\\
&=\tfrac{1}{2}\left[\frac{\cos\left(\tfrac{1}{2}\theta\sqrt{K/N'}\right)+1-\cos\left(\tfrac{1}{2}\theta\sqrt{K/N'}\right)}
{\cos\left(\tfrac{1}{2}\theta\sqrt{K/N'}\right)}\right]\\
&=\frac{1}{2\cos\left(\tfrac{1}{2}\theta\sqrt{K/N'}\right)}=\sigma^{(1/2)}_{K,N'}\left(\theta\right).
\end{align*}

The case $2\sigma^{(1/2)}_{K,N'}\left(\tfrac{1}{2}\theta\right)^2\geq1$ is trivial since then $\sigma^{(1/2)}_{K,N'}\left(\theta\right)=\infty$ by convention.

\bigskip

According to our curvature assumption, each point $x\in \mathsf{M}$ has a neighborhood $M(x)$ such that probability measures in $\mathcal{P}_\infty(\mathsf{M},\mathsf{d},\mathsf{m})$ which are supported in $M(x)$ can be joined by a geodesic in $\mathcal{P}_\infty(\mathsf{M},\mathsf{d},\mathsf{m})$ satisfying (\ref{ren}). By compactness of $M_c$, there exist $\lambda>0$, $n\in\N$, finitely many disjoint sets $L_1,L_2,\dots,L_n$ covering $M_c$, and closed sets $M_j\supseteq B_{\lambda}(L_j)$ for $j=1,\dots,n$, such that probability measures in $\mathcal{P}_\infty(M_j,\mathsf{d},\mathsf{m})$ can be joined by geodesics in $\mathcal{P}_\infty(\mathsf{M},\mathsf{d},\mathsf{m})$ satisfying (\ref{ren}). Choose $\kappa\in\N$ such that
$$2^{-\kappa}\mathsf{diam}(M_c,\mathsf{d},\mathsf{m})\leq\lambda.$$
Our next claim is:
\begin{claim}
Property $\mathsf{C(\kappa)}$ is satisfied.
\end{claim}
In order to prove this claim, we consider a geodesic $\Gamma$ in $\mathcal{P}_\infty(\mathsf{M},\mathsf{d},\mathsf{m})$ satisfying $\Gamma(0),\Gamma(1)\in\mathcal{P}_\infty(M_b,\mathsf{d},\mathsf{m})$ and numbers $s,t\in I_{\kappa}$ with $t-s=2^{-\kappa}$. Let $\mathsf{\hat q}$ be a coupling of $\Gamma(l2^{-\kappa})$ for $l=0,\dots,2^{\kappa}$ on $\mathsf{M}^{2^\kappa+1}$ such that for $\mathsf{\hat q}$-almost every $(x_l)_{l=0,\dots,2^{\kappa}}\in \mathsf{M}^{2^\kappa+1}$ the points $x_s,x_t$ lie on some geodesic connecting $x_0$ and $x_1$ with
\begin{equation} \label{lambda}
\mathsf{d}(x_s,x_t)=|t-s|\mathsf{d}(x_0,x_1)\leq 2^{-\kappa}\mathsf{diam}(M_c,\mathsf{d},\mathsf{m})\leq\lambda.
\end{equation}
Define probability measures $\Gamma_j(s)$ and $\Gamma_j(t)$ for $j=1,\dots,n$ by
$$\Gamma_j(s)(A):=\frac{1}{\alpha_j}\Gamma(s)(A\cap L_j)
=\frac{1}{\alpha_j}\mathsf{\hat q}(\underset{\textrm{$(2^\kappa+1)$ factors}}{\underbrace{\mathsf{M}\times\dots\times(\underset{\textrm{$(2^\kappa s+1)$-th factor}}{\underset{\uparrow}{A}}\cap L_j)\times \mathsf{M}\times\dots\times \mathsf{M})}}$$
and
$$\Gamma_j(t)(A):=\frac{1}{\alpha_j}\mathsf{\hat q}(\mathsf{M}\times\dots\times L_j\times \underset{\textrm{$(2^\kappa t+1)$-th factor}}{\underset{\uparrow}{A}}\times\dots\times \mathsf{M})$$
provided that $\alpha_j:=\Gamma_s(L_j)\not=0$. Otherwise, define $\Gamma_j(s)$ and $\Gamma_j(t)$ arbitrarily.
Then $\mathsf{supp}(\Gamma_j(s))\subseteq\overline{L_j}$ which combined with inequality (\ref{lambda}) implies
$$\mathsf{supp}(\Gamma_j(s))\cup\mathsf{supp}(\Gamma_j(t))\subseteq\overline{B_\lambda(L_j)}\subseteq M_j.$$
Therefore, for each $j\in\{1,\dots,n\}$, the assumption ``$(\mathsf{M},\mathsf{d},\mathsf{m})$ satisfies $\mathsf{CD}^*(K,N)$ locally'' can be applied to the probability measures $\Gamma_j(s)$ and $\Gamma_j(t)\in\mathcal{P}_\infty(M_j,\mathsf{d},\mathsf{m})$. It yields the existence of a midpoint $\eta_j(s,t)$ of $\Gamma_j(s)$ and $\Gamma_j(t)$ with the property that
\begin{equation} \label{etaj}
\mathsf{S}_{N'}(\eta_j(s,t)|\mathsf{m})\leq\sigma^{(1/2)}_{K,N'}(\theta_{s,t})\mathsf{S}_{N'}(\Gamma_j(s)|\mathsf{m})+
 \sigma^{(1/2)}_{K,N'}(\theta_{s,t})\mathsf{S}_{N'}(\Gamma_j(t)|\mathsf{m})
\end{equation}
for all $N'\geq N$ where
$$\theta_{s,t}:=\inf_{[x]\in\mathcal{G}^\Gamma_{\kappa}}\mathsf{d}(x_s,x_t).$$
Define
$$\eta(s,t):=\sum^n_{j=1}\alpha_j\eta_j(s,t).$$
Then, $\eta(s,t)$ is a midpoint of $\Gamma(s)=\sum^n_{j=1}\alpha_j\Gamma_j(s)$ and
$\Gamma(t)=\sum^n_{j=1}\alpha_j\Gamma_j(t)$. Moreover, since the $\Gamma_j(s)$ are mutually singular for $j=1,\dots,n$ and since $\mathsf{M}$ is non-branching, also the $\eta_j(s,t)$ are mutually singular for $j=1,\dots,n$ due to Lemma \ref{midmutsing}. Therefore, for all $N'\geq N$,
\begin{equation} \label{eta}
\mathsf{S}_{N'}(\eta(s,t)|\mathsf{m})=\sum^n_{j=1}\alpha^{1-1/N'}_j\mathsf{S}_{N'}(\eta_j(s,t)|\mathsf{m})
\end{equation}
and
\begin{equation} \label{gammas}
\mathsf{S}_{N'}(\Gamma(s)|\mathsf{m})=\sum^n_{j=1}\alpha^{1-1/N'}_j\mathsf{S}_{N'}(\Gamma_j(s)|\mathsf{m}),
\end{equation}
whereas
\begin{equation} \label{gammat}
\mathsf{S}_{N'}(\Gamma(t)|\mathsf{m})\geq\sum^n_{j=1}\alpha^{1-1/N'}_j\mathsf{S}_{N'}(\Gamma_j(t)|\mathsf{m}),
\end{equation}
since the $\Gamma_j(t)$ are not necessarily mutually singular for $j=1,\dots,n$. Summing up (\ref{etaj}) for $j=1,\dots,n$
and using (\ref{eta})--(\ref{gammat}) yields
$$\mathsf{S}_{N'}(\eta(s,t)|\mathsf{m})\leq\sigma^{(1/2)}_{K,N'}(\theta_{s,t})\mathsf{S}_{N'}(\Gamma(s)|\mathsf{m})+
 \sigma^{(1/2)}_{K,N'}(\theta_{s,t})\mathsf{S}_{N'}(\Gamma(t)|\mathsf{m})$$
for all $N'\geq N$. This proves property $\mathsf{C(\kappa)}$.

In order to finish the proof let two probability measures $\nu_0,\nu_1\in\mathcal{P}_\infty(M_b,\mathsf{d},\mathsf{m})$ be given. By assumption there exists a geodesic $\Gamma$ in $\mathcal{P}_\infty(\mathsf{M},\mathsf{d},\mathsf{m})$ connecting them. According to our second claim, property $\mathsf{C(\kappa)}$ is satisfied and according to our first claim, this implies $\mathsf{C(k)}$ for all $k=\kappa-1,\kappa-2,\dots,0$. Property $\mathsf{C(0)}$ finally states that there exists a midpoint $\eta\in\mathcal{P}_\infty(\mathsf{M},\mathsf{d},\mathsf{m})$ of $\Gamma(0)=\nu_0$ and $\Gamma(1)=\nu_1$ with
$$\mathsf{S}_{N'}(\eta|\mathsf{m})\leq\sigma^{(1/2)}_{K,N'}(\theta)\mathsf{S}_{N'}(\Gamma(0)|\mathsf{m})+\sigma^{(1/2)}_{K,N'}(\theta)\mathsf{S}_{N'}(\Gamma(1)|\mathsf{m}),$$
for all $N'\geq N$ where
$$\theta:=\inf_{x_0\in\mathcal{S}_0,x_1\in\mathcal{S}_1}\mathsf{d}(x_0,x_1).$$
This proves Theorem \ref{glob}.
\end{proof}

\begin{corollary}[$\mathsf{CD}^*_{\mathsf{loc}}(K-,N)\Leftrightarrow\mathsf{CD}^*(K,N)$]
Fix two numbers $K,N\in\R$. A non-branching metric measure space $(\mathsf{M},\mathsf{d},\mathsf{m})$ fulfills the reduced curvature-dimension condition $\mathsf{CD}^*(K',N)$ locally for all $K'< K$ if and only if it satisfies the condition $\mathsf{CD}^*(K,N)$ globally.
\end{corollary}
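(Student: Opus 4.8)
I would prove the two implications separately; only the passage from ``$\mathsf{CD}^*_{\mathsf{loc}}(K',N)$ for all $K'<K$'' to ``$\mathsf{CD}^*(K,N)$'' has any real content. For the easy direction, suppose $\mathsf{CD}^*(K,N)$ holds globally and fix $K'<K$: given $\nu_0=\rho_0\mathsf{m},\nu_1=\rho_1\mathsf{m}\in\mathcal{P}_\infty(\mathsf{M},\mathsf{d},\mathsf{m})$, the optimal coupling $\mathsf{q}$ and geodesic $\Gamma$ provided by $\mathsf{CD}^*(K,N)$ work verbatim for $K'$, since $\sigma^{(s)}_{\cdot,N'}(\theta)$ is non-decreasing in its first argument (Remark after Lemma \ref{sigma}) and $\rho_0^{-1/N'},\rho_1^{-1/N'}\ge 0$, so lowering $K$ to $K'$ only enlarges the right-hand side of (\ref{ren}). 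Hence $\mathsf{CD}^*(K',N)$ holds globally, a fortiori locally, for every $K'<K$.

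For the converse, assume $\mathsf{CD}^*_{\mathsf{loc}}(K',N)$ for every $K'<K$. First I would apply Theorem \ref{glob} with $K'$ in place of $K$ (its hypotheses being in force) to get $\mathsf{CD}^*(K',N)$ globally for every $K'<K$; this already makes $\mathsf{supp}(\mathsf{m})$ proper (Theorem \ref{bishop}), and when $K>0$ the Bonnet--Myers bound (Corollary \ref{bonnet}) applied at each $K'<K$ yields $\mathsf{diam}(\mathsf{M},\mathsf{d},\mathsf{m})\le\pi\sqrt{N/K}$. I would then fix $\nu_0=\rho_0\mathsf{m},\nu_1=\rho_1\mathsf{m}\in\mathcal{P}_\infty(\mathsf{M},\mathsf{d},\mathsf{m})$ and a sequence $K_n\nearrow K$, choose for each $n$ an optimal coupling $\mathsf{q}_n$ of $\nu_0,\nu_1$ and a geodesic $\Gamma_n\colon[0,1]\to\mathcal{P}_\infty(\mathsf{M},\mathsf{d},\mathsf{m})$ satisfying (\ref{ren}) with $K_n$, and use properness together with the fixed bounded supports of $\nu_0,\nu_1$ to pass to a subsequence along which $\mathsf{q}_n\to\mathsf{q}$ and $\Gamma_n\to\Gamma$, with $\mathsf{q}$ still an optimal coupling of $\nu_0,\nu_1$ and $\Gamma$ a geodesic joining them.

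The core step is the limit in (\ref{ren}). Fixing $m\in\N$ and $N'>N$, monotonicity of $\sigma^{(s)}_{\cdot,N'}(\theta)$ in $K$ gives, for all $n\ge m$ and $t\in[0,1]$,
$$\mathsf{S}_{N'}(\Gamma_n(t)|\mathsf{m})\le-\int_{\mathsf{M}\times\mathsf{M}}\Big[\sigma^{(1-t)}_{K_m,N'}(\mathsf{d}(x_0,x_1))\rho_0^{-1/N'}(x_0)+\sigma^{(t)}_{K_m,N'}(\mathsf{d}(x_0,x_1))\rho_1^{-1/N'}(x_1)\Big]\,d\mathsf{q}_n(x_0,x_1).$$
Because $\mathsf{diam}(\mathsf{M},\mathsf{d},\mathsf{m})\le\pi\sqrt{N/K}\le\pi\sqrt{N'/K_m}$ and $N'>N$, the kernels $\sigma^{(s)}_{K_m,N'}(\mathsf{d}(\cdot,\cdot))$ are bounded and continuous on the relevant compact sets; the densities $\rho_0,\rho_1$ do not depend on $n$, and every $\mathsf{q}_n$ has the fixed marginals $\nu_0,\nu_1$ with $\rho_i^{-1/N'}\in L^1(\nu_i)$, so approximating $\rho_i^{-1/N'}$ in $L^1(\nu_i)$ by bounded continuous functions --- the device of [Stu06b, Lemma 3.3] already used in the proof of Proposition \ref{nb} --- makes the right-hand side converge to the corresponding integral against $\mathsf{q}$, while lower semicontinuity of the R\'enyi entropy handles the left-hand side. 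This yields (\ref{ren}) for $(\mathsf{q},\Gamma)$ with parameter $K_m$; letting $m\to\infty$ with $\sigma^{(s)}_{K_m,N'}(\theta)\uparrow\sigma^{(s)}_{K,N'}(\theta)$ (continuity in $K$) and then $N'\searrow N$ (continuity of all terms in $N'$, dominated convergence for the entropy) upgrades this to $\mathsf{CD}^*(K,N)$. Equivalently, once $\mathsf{CD}^*(K',N)$ is available for all $K'<K$, this last step is nothing but the stability Theorem \ref{stab} for the constant sequence $(\mathsf{M},\mathsf{d},\mathsf{m})$ with $(K_n,N)\to(K,N)$.

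I expect the main obstacle to be, in the case $K>0$, the blow-up of the distortion coefficients at the critical scale $K\theta^2=N'\pi^2$: the plan is to neutralise it by extracting \emph{first} the a priori bound $\mathsf{diam}(\mathsf{M},\mathsf{d},\mathsf{m})\le\pi\sqrt{N/K}$ from Bonnet--Myers applied to the spaces $\mathsf{CD}^*(K_n,N)$ (which keeps every $\sigma^{(s)}_{K_n,N'}$ with $N'>N$ bounded) and to relegate the single borderline configuration --- $N'=N$ together with $\mathsf{q}$-mass at distance exactly $\pi\sqrt{N/K}$ --- to the final monotone passage $N'\searrow N$, where it is as harmless as in the original formulation of $\mathsf{CD}(K,N)$. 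The only other delicate point is the weak-convergence bookkeeping for the integrals $\int\sigma^{(s)}_{K_m,N'}(\mathsf{d})\,\rho_i^{-1/N'}\,d\mathsf{q}_n$ with the merely measurable densities $\rho_i^{-1/N'}$, which I would dispatch via the same approximation lemma invoked for Proposition \ref{nb}.
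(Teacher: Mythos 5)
Your proof is correct and takes essentially the same route as the paper: apply Theorem \ref{glob} at each level $K'<K$ to get $\mathsf{CD}^*(K',N)$ globally, then pass to the limit $K'\nearrow K$ via stability (Theorem \ref{stab}). You spell out the tightness and weak-convergence bookkeeping for the constant-space sequence rather than just citing Theorem \ref{stab}, but, as you observe yourself, that unwinding is the same argument.
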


\begin{proof}
Given any $K'<K$, the condition $\mathsf{CD}^*(K',N)$ is deduced from $\mathsf{CD}^*_{\mathsf{loc}}(K',N)$ according to the above localization theorem. Due to the stability of the reduced curvature-dimension condition stated in Theorem \ref{stab}, $\mathsf{CD}^*(K',N)$ for all $K'<K$ implies $\mathsf{CD}^*(K,N)$.
\end{proof}

\begin{proposition}[$\mathsf{CD}^*_{\mathsf{loc}}(K-,N)\Leftrightarrow\mathsf{CD}_{\mathsf{loc}}(K-,N)$] \label{forall}
Fix two numbers $K,N\in\R$. A metric measure space $(\mathsf{M},\mathsf{d},\mathsf{m})$ fulfills the reduced curvature-dimension condition $\mathsf{CD}^*(K',N)$ locally for all $K'< K$ if and only if it satisfies the original condition $\mathsf{CD}(K',N)$ locally for all $K'< K$.
\end{proposition}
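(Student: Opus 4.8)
The plan is to show that the two local conditions, quantified over all $K' < K$, are equivalent by exploiting the strict inequality $K' < K$ together with the continuity and monotonicity of the distortion coefficients in the parameters $(K,N)$ recorded in the Remark following Lemma \ref{sigma}. One direction is immediate: since $\tau^{(t)}_{K',N}(\theta) \ge \sigma^{(t)}_{K',N}(\theta)$ for all admissible $(t,\theta)$ (this is exactly the inequality established in the proof of Proposition \ref{implications}(i)), the condition $\mathsf{CD}_{\mathsf{loc}}(K',N)$ implies $\mathsf{CD}^*_{\mathsf{loc}}(K',N)$ for each fixed $K'$, hence $\mathsf{CD}_{\mathsf{loc}}(K'-,N) \Rightarrow \mathsf{CD}^*_{\mathsf{loc}}(K'-,N)$ trivially (in fact for each $K'$ separately, without needing to vary it).

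For the nontrivial direction, suppose $(\M,\mathsf{d},\m)$ satisfies $\mathsf{CD}^*(K'',N)$ locally for all $K'' < K$; I want to deduce $\mathsf{CD}(K',N)$ locally for every fixed $K' < K$. Fix such a $K'$ and choose an intermediate parameter, say $\bar K \in (K',K)$. The key elementary fact — analogous to inequality (\ref{leq}) in the proof of Proposition \ref{implications}(ii) but now in the other direction — is that for $K' < \bar K$ one has $\sigma^{(t)}_{\bar K, N'}(\theta) \ge \tau^{(t)}_{K',N'}(\theta)$ for all $t \in [0,1]$, all $N' \ge N$, and all sufficiently small $\theta$. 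This should be checked by the same trigonometric manipulation: writing out $\sigma^{(t)}_{\bar K, N'}$ and $\tau^{(t)}_{K',N'} = (t)^{1/N'}\sigma^{(t)}_{K',N'-1}(\theta)^{1-1/N'}$, the comparison reduces, after taking logarithmic derivatives in the curvature parameter, to a monotonicity statement, and the strict gap $\bar K - K' > 0$ gives the inequality on a small enough ball of $\theta$-values. Since the local conditions only constrain measures supported in small neighborhoods, one may shrink the neighborhood $M(x)$ of each point $x$ so that $\mathsf{d}(x_0,x_1)$ stays within the range where this pointwise coefficient comparison is valid; then the defining inequality (\ref{ren}) for $\mathsf{CD}^*_{\mathsf{loc}}(\bar K, N)$ implies the defining inequality (\ref{CD}) for $\mathsf{CD}_{\mathsf{loc}}(K',N)$, with the same optimal coupling and geodesic. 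This uses that for $\bar K$ positive the threshold $\bar K\theta^2 \ge N'\pi^2$ is not reached on a small ball, so the coefficients are finite and the comparison is between genuine trigonometric expressions.

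The main obstacle I anticipate is the coefficient comparison $\sigma^{(t)}_{\bar K,N'}(\theta) \ge \tau^{(t)}_{K',N'}(\theta)$ for small $\theta$: unlike the clean algebraic identity $\sigma^{(t)}_{K,N}(\theta)^N = \sigma^{(t)}_{0,1}(\theta)\cdot\sigma^{(t)}_{K,N-1}(\theta)^{N-1}$ used before, here one compares a $\sigma$-coefficient at curvature $\bar K$ and dimension $N'$ against a $\tau$-coefficient at strictly smaller curvature $K'$ and the \emph{same} dimension, so there is a genuine trade-off and the inequality can only hold locally (for $\theta$ small), which is precisely why the proposition is stated with the ``$K-$'' quantifier rather than for fixed $K$. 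A convenient way to organize this is to note $\tau^{(t)}_{K',N'}(\theta) \le \sigma^{(t)}_{K',N'}(\theta)$ always, and then to show $\sigma^{(t)}_{K',N'}(\theta) / \sigma^{(t)}_{\bar K,N'}(\theta) \to 1$ as $\theta \to 0$ uniformly in $t$ and $N' \ge N$ — indeed, both numerator and denominator are $t + O(\theta^2)$ with the $O$-term depending linearly on the curvature parameter — so that for $\theta$ below an explicit threshold $\theta_0 = \theta_0(K',\bar K, N)$ one has $\sigma^{(t)}_{K',N'}(\theta) \le \sigma^{(t)}_{\bar K, N'}(\theta)$, which combined with the trivial comparison finishes the argument. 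Once the coefficient inequality is in hand, the remainder of the proof is purely formal: every local statement is about measures in a small ball, the balls can be taken inside the $\theta_0$-regime, and (\ref{CD}) follows from (\ref{ren}) term by term.
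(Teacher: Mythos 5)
Your overall strategy coincides with the paper's: for the nontrivial direction fix $K' < K$, interpolate a $\tilde K$ (you write $\bar K$) strictly between them, show that $\tau^{(t)}_{K',N'}(\theta)\le\sigma^{(t)}_{\tilde K,N'}(\theta)$ for $\theta$ below some explicit threshold $\theta^\ast$ via Taylor expansion in $\theta$, and then intersect the $\mathsf{CD}^*_{\mathsf{loc}}$-neighborhoods with balls of radius $\theta^\ast$. The easy direction and the neighborhood-shrinking are handled exactly as in the paper. However, the ``convenient way to organize'' you offer at the end contains a sign error that kills that particular route, and it is worth being precise about why.

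You assert there that $\tau^{(t)}_{K',N'}(\theta)\le\sigma^{(t)}_{K',N'}(\theta)$ ``always'' and then chain with the monotonicity $\sigma^{(t)}_{K',N'}\le\sigma^{(t)}_{\tilde K,N'}$. But the inequality between $\tau$ and $\sigma$ at \emph{identical} parameters points the other way: Proposition~\ref{implications}(i) gives $\tau^{(t)}_{K,N}(\theta)\ge\sigma^{(t)}_{K,N}(\theta)$, which is exactly what you use (correctly) two paragraphs earlier for the trivial direction. So the chain $\tau_{K'}\le\sigma_{K'}\le\sigma_{\tilde K}$ does not close. And if it did close, it would hold for \emph{all} $\theta$, not just small ones, which would prove the far stronger global implication from $\mathsf{CD}^*$ to $\mathsf{CD}$ — a conclusion you yourself flag, earlier and correctly, as impossible to obtain without the smallness restriction on $\theta$. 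The actual reason the comparison holds only locally is that the two Taylor expansions
\begin{align*}
\bigl[\tau^{(t)}_{K',N}(\theta)\bigr]^N &= t^N\Bigl[1+\tfrac16(1-t^2)\theta^2 K'+O(\theta^4)\Bigr],\\
\bigl[\sigma^{(t)}_{\tilde K,N}(\theta)\bigr]^N &= t^N\Bigl[1+\tfrac16(1-t^2)\theta^2 \tilde K+O(\theta^4)\Bigr]
\end{align*}
agree to second order except for the strict gap $\tilde K-K'>0$, while the (wrong-sign) discrepancy $\tau^{(t)}_{K,N}-\sigma^{(t)}_{K,N}$ at a common $K$ is only $O(\theta^4)$. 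The $\Theta(\theta^2)$ gain from raising the curvature parameter dominates the $O(\theta^4)$ loss for $\theta$ small, and that is the comparison that has to be made directly — not factored through $\sigma_{K'}$. Your first description of the argument (``genuine trade-off ... the inequality can only hold locally'') is the right one; the ``convenient'' reorganization should be discarded, and the phrase ``logarithmic derivatives in the curvature parameter'' replaced by a Taylor expansion in $\theta$ with a comparison of the quadratic coefficients, as in the paper.
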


\begin{proof}
As remarked in the past, we content ourselves with the case $K>0$. Again, the general one can be deduced from analogous calculations. The implication \textquotedblleft$\mathsf{CD}^*_{\mathsf{loc}}(K-,N)\Leftarrow\mathsf{CD}_{\mathsf{loc}}(K-,N)$\textquotedblright \ follows from analogous arguments leading to part (i) of Proposition \ref{implications}.

The implication \textquotedblleft$\mathsf{CD}^*_{\mathsf{loc}}(K-,N)\Rightarrow\mathsf{CD}_{\mathsf{loc}}(K-,N)$\textquotedblright \ is based on the fact that the coefficients $\tau^{(t)}_{K,N}(\theta)$ and $\sigma^{(t)}_{K,N}(\theta)$ are ``almost identical'' for $\theta\ll 1$: In order to be precise, we consider $0<K'<\tilde K<K$ and $\theta\ll 1$ and compare the relevant coefficients $\tau^{(t)}_{K',N}(\theta)$ and $\sigma^{(t)}_{\tilde K,N}(\theta)$:
\begin{align*}
\left[\tau^{(t)}_{K',N}(\theta)\right]^N&=t\left[\frac{\sin\left(t\theta\sqrt{\tfrac{K'}{N-1}}\right)}{\sin\left(\theta\sqrt{\tfrac{K'}{N-1}}\right)}\right]^{N-1}\\
&=t^N\left[\frac{1-\tfrac{1}{6}t^2\theta^2\tfrac{K'}{N-1}+O(\theta^4)}{1-\tfrac{1}{6}\theta^2\tfrac{K'}{N-1}+O(\theta^4)}\right]^{N-1}\\
&=t^N\left[1+\tfrac{1}{6}(1-t^2)\theta^2\tfrac{K'}{N-1}+O(\theta^4)\right]^{N-1}\\
&=t^N\left[1+\tfrac{1}{6}(1-t^2)\theta^2K'+O(\theta^4)\right].
\end{align*}
And accordingly,
\begin{align*}
\left[\sigma^{(t)}_{\tilde K,N}(\theta)\right]^N&=\left[\frac{\sin\left(t\theta\sqrt{\tfrac{\tilde K}{N}}\right)}{\sin\left(\theta\sqrt{\tfrac{\tilde K}{N}}\right)}\right]^N\\
&= t^N\left[\frac{1-\tfrac{1}{6}t^2\theta^2\tfrac{\tilde K}{N}+O(\theta^4)}{1-\tfrac{1}{6}\theta^2\tfrac{\tilde K}{N}+O(\theta^4)}\right]^N\\
&=t^N\left[1+\tfrac{1}{6}(1-t^2)\theta^2\tfrac{\tilde K}{N}+O(\theta^4)\right]^N\\
&=t^N\left[1+\tfrac{1}{6}(1-t^2)\theta^2\tilde K+O(\theta^4)\right].
\end{align*}
Now we choose $\theta^\ast>0$ in such a way that
$$\tau^{(t)}_{K',N}(\theta)\leq\sigma^{(t)}_{\tilde K,N}(\theta)$$
for all $0\leq\theta\leq\theta^\ast$ and all $t\in[0,1]$. According to our curvature assumption, each point $x\in\mathsf{M}$ has a neighborhood $M(x)\subseteq\mathsf{M}$ such that every two probability measures $\nu_0,\nu_1\in\mathcal{P}_\infty(M(x),\mathsf{d},\mathsf{m})$ can be joined by a geodesic in $\mathcal{P}_\infty(\mathsf{M},\mathsf{d},\mathsf{m})$ satisfying (\ref{ren}). In order to prove that $(\mathsf{M},\mathsf{d},\mathsf{m})$ satisfies $\mathsf{CD}(K',N)$ locally, we set for $x\in\M$,
$$M'(x):=M(x)\cap B_{\theta^\ast}(x)$$
and consider $\nu_0,\nu_1\in\mathcal{P}_\infty(M'(x),\mathsf{d},\mathsf{m})$. As indicated above, due to $\mathsf{CD}^*_{\mathsf{loc}}(\tilde K,N)$ there exist an optimal
coupling $\mathsf{q}$ of $\nu_0=\rho_0\mathsf{m}$ and $\nu_1=\rho_1\mathsf{m}$ and a geodesic
$\Gamma:[0,1]\rightarrow\mathcal{P}_\infty(\mathsf{M},\mathsf{d},\mathsf{m})$ connecting $\nu_0$ and $\nu_1$
such that
\begin{align*}
\mathsf{S}_{N'}&(\Gamma(t)|\mathsf{m})\leq\\
&\leq-\int_{\mathsf{M}\times
\mathsf{M}}\left[\sigma^{(1-t)}_{\tilde K,N'}(\underset{\leq\theta^\ast}{\underbrace{\mathsf{d}(x_0,x_1)}})\rho^{-1/N'}_0(x_0)+
\sigma^{(t)}_{\tilde K,N'}(\underset{\leq\theta^\ast}{\underbrace{\mathsf{d}(x_0,x_1)}})\rho^{-1/N'}_1(x_1)\right]d\mathsf{q}(x_0,x_1)\\
&\leq-\int_{\mathsf{M}\times
\mathsf{M}}\left[\tau^{(1-t)}_{K',N'}(\mathsf{d}(x_0,x_1))\rho^{-1/N'}_0(x_0)+
\tau^{(t)}_{K',N'}(\mathsf{d}(x_0,x_1))\rho^{-1/N'}_1(x_1)\right]d\mathsf{q}(x_0,x_1)
\end{align*}
for all $t\in [0,1]$ and all $N'\geq N$.
\end{proof}

\begin{remark}
The proofs of Theorem \ref{tensorization} and Theorem \ref{glob}, respectively, do not extend to the ori\-ginal curvature-dimension condition $\mathsf{CD}(K,N)$. An immediate obstacle is that no analogous statements of rather technical tools like Lemma \ref{sigma} and Proposition \ref{midpoints} are known due to the more complicated nature of the coefficients $\tau^{(t)}_{K,N}(\cdot)$. It is still an open question whether $\mathsf{CD}(K,N)$ satisfies the tensorization or the local-to-global property.
\end{remark}




\section{Geometric and Functional Analytic Consequences}

\subsection{Geometric Results}

The weak versions of the geometric statements derived from $\mathsf{CD}(K,N)$ in \cite{sb} follow by using analogous arguments replacing the coefficients $\tau^{(t)}_{K,N}(\cdot)$ by $\sigma^{(t)}_{K,N}(\cdot)$.

Note that we do not use the assumption of non-branching metric measure spaces in this whole section and that Corollary \ref{bonnet} and Theorem \ref{lichnerowicz} follow immediately from the strong versions in \cite{sb} in combination with Proposition \ref{implications}(ii).

\begin{proposition}[Generalized Brunn-Minkowski inequality] \label{brunn}
Assume that $(\mathsf{M},\mathsf{d},\mathsf{m})$ satisfies the condition $\mathsf{CD}^*(K,N)$ for two real parameters $K,N$ with $N\geq 1$. Then for all measurable sets $A_0,A_1\subseteq \mathsf{M}$ with $\mathsf{m}(A_0),\mathsf{m}(A_1)>0$ and all $t\in[0,1]$,
\begin{equation} \label{gbm}
\mathsf{m}(A_t)\geq
\sigma^{(1-t)}_{K,N}(\Theta)\cdot
\mathsf{m}(A_0)^{1/N}+\sigma^{(t)}_{K,N}(\Theta)\cdot\mathsf{m}(A_1)^{1/N}
\end{equation}
where $A_t$ denotes the set of points which divide geodesics starting in $A_0$ and ending in $A_1$ with ratio $t:(1-t)$ and where $\Theta$ denotes the minimal/maximal length of such geodesics
\begin{equation*}
\Theta:=
\begin{cases}
\underset{x_0\in A_0,x_1\in A_1}{\inf}\mathsf{d}(x_0,x_1),& K\geq 0\\
\underset{x_0\in A_0,x_1\in A_1}{\sup}\mathsf{d}(x_0,x_1),& K<0.
\end{cases}
\end{equation*}
\end{proposition}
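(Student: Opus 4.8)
The plan is to feed into $\mathsf{CD}^*(K,N)$ the normalized restrictions of $\mathsf m$ to $A_0$ and to $A_1$ and then to read the volume bound off the resulting estimate for the R\'enyi entropy of the intermediate measure. I would first reduce to the case where $A_0,A_1$ are bounded with $0<\mathsf m(A_0),\mathsf m(A_1)<\infty$: write each $A_i$ as an increasing union $\bigcup_n A_i^{(n)}$ of bounded sets of finite measure, apply (\ref{gbm}) to each pair $(A_0^{(n)},A_1^{(n)})$ -- whose set $A_t^{(n)}$ of $t$-intermediate points satisfies $A_t^{(n)}\subseteq A_t$ -- and pass to the limit $n\to\infty$, using $\mathsf m(A_i^{(n)})\to\mathsf m(A_i)$, $\Theta^{(n)}\to\Theta$ and the continuity of $(K,N,\theta)\mapsto\sigma^{(t)}_{K,N}(\theta)$; this recovers the statement in full generality (with the usual conventions whenever a term equals $+\infty$).

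So assume $A_0,A_1$ bounded with finite positive measure and set $\nu_i:=\mathsf m(A_i)^{-1}\,\mathsf m|_{A_i}=\rho_i\mathsf m\in\mathcal P_\infty(\mathsf M,\mathsf d,\mathsf m)$, so that $\rho_i\equiv\mathsf m(A_i)^{-1}$ holds $\nu_i$-almost everywhere. Applying $\mathsf{CD}^*(K,N)$ with $N'=N$ produces an optimal coupling $\mathsf q$ of $\nu_0,\nu_1$ and a geodesic $\Gamma(t)=\rho_t\mathsf m$ connecting them for which (\ref{ren}) holds. Since $\mathsf q$ is concentrated on $A_0\times A_1$, for $\mathsf q$-almost every $(x_0,x_1)$ one has $\rho_0(x_0)^{-1/N}=\mathsf m(A_0)^{1/N}$, $\rho_1(x_1)^{-1/N}=\mathsf m(A_1)^{1/N}$, and (by the very definition of $\Theta$) $\mathsf d(x_0,x_1)\geq\Theta$ if $K\geq 0$, $\mathsf d(x_0,x_1)\leq\Theta$ if $K<0$; as $\theta\mapsto\sigma^{(s)}_{K,N}(\theta)$ is non-decreasing for $K\geq 0$ and non-increasing for $K<0$ (see the proof of Proposition \ref{nb}), this yields $\sigma^{(s)}_{K,N}(\mathsf d(x_0,x_1))\geq\sigma^{(s)}_{K,N}(\Theta)$ for $s\in\{t,1-t\}$. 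Substituting into (\ref{ren}) gives
\[
\mathsf S_N(\Gamma(t)\,|\,\mathsf m)\ \leq\ -\,\sigma^{(1-t)}_{K,N}(\Theta)\,\mathsf m(A_0)^{1/N}\ -\ \sigma^{(t)}_{K,N}(\Theta)\,\mathsf m(A_1)^{1/N}.
\]

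It remains to bound the left-hand side from below by $-\,\mathsf m(A_t)^{1/N}$, and this is the one step that requires a genuine argument. The point is that $\Gamma(t)$ is concentrated on $A_t$: any Wasserstein geodesic joining $\nu_0$ to $\nu_1$ can be written as $t\mapsto(e_t)_*\Pi$ for a probability measure $\Pi$ on $\mathcal G(\mathsf M)$ (see e.g.\ \cite{lva, vib}), and since $(e_0)_*\Pi=\nu_0$ and $(e_1)_*\Pi=\nu_1$ charge only $A_0$ and $A_1$, respectively, $\Pi$-almost every geodesic runs from $A_0$ to $A_1$ and therefore has its $t$-point in $A_t$; hence $\int_{A_t}\rho_t\,d\mathsf m=\Gamma(t)(A_t)=1$. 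Jensen's inequality for the concave function $s\mapsto s^{1-1/N}$ with respect to the probability measure $\mathsf m(A_t)^{-1}\mathsf m|_{A_t}$ then gives $-\mathsf S_N(\Gamma(t)\,|\,\mathsf m)=\int_{A_t}\rho_t^{1-1/N}\,d\mathsf m\leq\mathsf m(A_t)^{1/N}$; combining this with the previous display and rearranging yields the asserted inequality (\ref{gbm}). The degenerate case $\sigma^{(s)}_{K,N}(\Theta)=+\infty$ (possible only for $K>0$) is excluded at once, since it would force the right-hand side of (\ref{ren}) to be $-\infty$, whereas $\mathsf S_N(\Gamma(t)\,|\,\mathsf m)$ is finite. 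The main obstacle is thus the representation of the interpolating geodesic by a measure on geodesics of $\mathsf M$ that underlies the identity $\Gamma(t)(A_t)=1$; with that in hand, the proof is just a substitution into the defining inequality followed by Jensen's inequality, and -- consistently with the remark opening this section -- it nowhere uses that the space is non-branching.
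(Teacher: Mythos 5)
Your proof is correct and follows the same argument the paper implicitly invokes (it refers to the analogous proof of the Brunn--Minkowski inequality in [Stu06b], with $\tau$ replaced by $\sigma$): normalize $\mathsf m$ restricted to $A_0,A_1$, plug into (\ref{ren}) with $N'=N$, use the monotonicity of $\theta\mapsto\sigma^{(s)}_{K,N}(\theta)$, and finish by showing $\Gamma(t)$ is concentrated on $A_t$ via the representation of the Wasserstein geodesic by a measure on $\mathcal G(\mathsf M)$ together with Jensen's inequality. One remark: your derivation yields $\mathsf m(A_t)^{1/N}$ on the left-hand side, which is the correct (and dimensionally consistent) form; the exponent $1/N$ is missing in the statement of (\ref{gbm}) as printed, evidently a typo.
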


The Brunn-Minkowski inequality implies further geometric consequences, for example the Bishop-Gromov volume growth estimate and the Bonnet-Myers theorem.

For a fixed point $x_0\in\mathsf{supp}(\mathsf{m})$ we study the growth of the volume of closed balls centered at $x_0$ and the growth of the volume of the corresponding spheres
$$v(r):=\mathsf{m}\left(\overline{B_r(x_0)}\right) \mbox{ \ and \ } s(r):=\limsup_{\delta\to 0}\tfrac{1}{\delta}\mathsf{m}\left(\overline{B_{r+\delta}(x_0)}\setminus B_r(x_0)\right),$$
respectively.

\begin{theorem}[Generalized Bishop-Gromov volume growth inequality] \label{bishop}
Assume that the metric measure space $(\mathsf{M},\mathsf{d},\mathsf{m})$ satisfies the condition $\mathsf{CD}^*(K,N)$ for some $K,N\in\R$. Then each bounded closed set $M_{b,c}\subseteq\mathsf{supp}(\m)$ is compact and has finite volume. To be more precise, if $K>0$ then for each fixed $x_0\in\mathsf{supp}(\mathsf{m})$ and all $0<r<R\leq\pi\sqrt{N/K}$,
\begin{equation} \label{sphere}
\frac{s(r)}{s(R)}\geq\left(\frac{\sin(r\sqrt{K/N})}{\sin(R\sqrt{K/N})}\right)^N \mbox{ \ and \quad }
\frac{v(r)}{v(R)}\geq\frac{\int\limits^r_0\sin\left(t\sqrt{K/N}\right)^Ndt}{\int\limits^R_0\sin\left(t\sqrt{K/N}\right)^Ndt}.
\end{equation}
In the case $K<0$, analogous inequalities hold true (where the right-hand sides of (\ref{sphere}) are replaced by analogous expressions according to the definition of the coefficients $\sigma^{(t)}_{K,N}(\cdot)$ for negative $K$).
\end{theorem}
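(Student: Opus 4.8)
The plan is to derive everything from the generalized Brunn-Minkowski inequality (Proposition \ref{brunn}), following the classical Bishop-Gromov strategy adapted to the metric-measure setting as in \cite{sb}, but with $\tau^{(t)}_{K,N}$ replaced throughout by $\sigma^{(t)}_{K,N}$. The first goal is compactness and finite volume of bounded closed subsets of $\mathsf{supp}(\m)$: for $K\le 0$ this is a direct consequence of the doubling property, which itself follows from (\ref{gbm}) applied with $A_0$ a small ball and $A_1$ a concentric larger ball (then $A_t$ sits inside a ball of intermediate radius, and iterating gives $\m(B_{2r}(x_0))\le C\,\m(B_r(x_0))$). A locally finite doubling measure on a complete metric space forces the support to be proper, hence bounded closed sets are compact and of finite volume. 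For $K>0$ the same argument works on the range $r\le \pi\sqrt{N/K}$, and the sharp Bonnet-Myers-type bound $\mathsf{diam}\le\pi\sqrt{N/K}$ (which I would establish alongside, or quote in the form already available via the volume estimate) shows there is nothing beyond that range.

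Next I would prove the sphere inequality in (\ref{sphere}). Fix $x_0\in\mathsf{supp}(\m)$, fix $0<r<R\le\pi\sqrt{N/K}$, and take $A_0=\{x_0\}$ (or a tiny ball around it, passing to a limit), $A_1=\overline{B_{R+\delta}(x_0)}\setminus B_R(x_0)$ a thin annulus. The set $A_t$ of $t$-intermediate points then lies in the annulus $\overline{B_{tR'+\delta'}(x_0)}\setminus B_{tR'}(x_0)$ for appropriate radii, and choosing $t=r/R$ identifies $A_t$ with (essentially) the sphere at radius $r$. Brunn-Minkowski (\ref{gbm}) with $\Theta\approx R$ gives, after letting the ball around $x_0$ and the annulus width shrink,
\begin{equation*}
\m(A_t)\ge \sigma^{(t)}_{K,N}(R)\cdot\m(A_1)^{1/N},
\end{equation*}
and unwinding $\sigma^{(t)}_{K,N}(R)=t\,\mathfrak{S}_{K/N}(tR)/\mathfrak{S}_{K/N}(R)=\sin(tR\sqrt{K/N})/(R\sqrt{K/N}\,\mathfrak{S}_{K/N}(R))$ together with the scaling in $\delta$ yields exactly $s(r)/s(R)\ge(\sin(r\sqrt{K/N})/\sin(R\sqrt{K/N}))^N$. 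The ball inequality then follows by integrating $s$: since $v(r)=\int_0^r s(u)\,du$ almost everywhere (or more carefully, $v$ is locally Lipschitz with $v'=s$ a.e.\ by the doubling property), the pointwise sphere bound integrates to the stated ratio of $\int_0^r\sin(t\sqrt{K/N})^N\,dt$. The case $K<0$ is identical with $\sin$ replaced by $\sinh$ and the relevant $\Theta=\sup$ convention, since $\sigma^{(t)}_{K,N}$ for $K<0$ has precisely the analogous form.

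The routine-but-delicate steps are: (i) justifying that $A_0=\{x_0\}$ can be used, which requires approximating by small balls and controlling $\Theta$ in the limit — here $\Theta\to R$ as the ball shrinks, and the coefficient $\sigma^{(t)}_{K,N}(\cdot)$ is continuous, so this is safe; (ii) the measurability and density properties of the "intermediate set" map, already handled in \cite{sb} and transferring verbatim; and (iii) the differentiation argument giving $v'=s$, which is standard once properness/doubling is in hand. \textbf{The main obstacle} I anticipate is organizational rather than conceptual: one must be careful that the Brunn-Minkowski inequality is applied with the correct value of $\Theta$ (the $\inf$ for $K\ge0$), because the sphere at radius $r$ is reached from $x_0$ by geodesics of length exactly $R$ only in the limiting sense, and a sloppy choice of the annulus $A_1$ could make $\Theta$ strictly smaller than $R$, weakening the estimate. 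Pinning down that $\Theta\to R$ and that $A_t$ genuinely exhausts the sphere at radius $r$ (no loss of measure to branching or non-uniqueness of geodesics, which is fine here since we do not even assume non-branching in this section — we only need the Brunn-Minkowski inequality, which already accounts for all geodesics) is the step that needs the most care.
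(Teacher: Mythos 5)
Your proposal follows exactly the route the paper takes: the paper itself gives no detailed proof of Theorem~\ref{bishop} but simply states that ``the weak versions of the geometric statements derived from $\mathsf{CD}(K,N)$ in \cite{sb} follow by using analogous arguments replacing the coefficients $\tau^{(t)}_{K,N}(\cdot)$ by $\sigma^{(t)}_{K,N}(\cdot)$,'' and the [Stu06b] argument you reconstruct (Brunn--Minkowski with $A_0$ a shrinking ball around $x_0$, $A_1$ a thin annulus at radius~$R$, then integrate the sphere estimate to get the ball estimate, with doubling giving properness along the way) is precisely that argument. So the approach is the correct one and matches the paper.

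Two small points of accuracy, neither of which affects the validity of the approach. First, your displayed intermediate inequality should read $\m(A_t)^{1/N}\ge\sigma^{(t)}_{K,N}(\Theta)\,\m(A_1)^{1/N}$, i.e.\ $\m(A_t)\ge\sigma^{(t)}_{K,N}(\Theta)^N\,\m(A_1)$; as written it is dimensionally inconsistent (you appear to have inherited the missing exponent from the paper's own statement of (\ref{gbm}), where the left-hand side should be $\m(A_t)^{1/N}$). Second, the scaling in $\delta$ does not ``yield exactly'' the stated sphere inequality: since the intermediate annulus $A_t$ has width $t\delta$ (not $\delta$), dividing $\m(A_t)\ge\sigma^{(t)}_{K,N}(\Theta)^N\m(A_1)$ through by $t\delta$ produces the stronger bound $s(r)/s(R)\ge\tfrac{R}{r}\bigl(\sin(r\sqrt{K/N})/\sin(R\sqrt{K/N})\bigr)^N$. (In the $\mathsf{CD}(K,N)$ version the extra $1/t$ is absorbed by the $t$ in $\tau^{(t)}_{K,N}(\theta)^N=t\,\sigma^{(t)}_{K,N-1}(\theta)^{N-1}$; with $\sigma$ in place of $\tau$ it survives.) Since $R/r>1$, this is harmless --- the theorem as stated follows a fortiori --- but it is worth being precise about why the exponent in (\ref{sphere}) is $N$ rather than a sharper $N-1$. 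Your identification of the delicate points (the $\Theta\to R$ limit, measurability of the intermediate set, and the differentiation $v'=s$) is correct and these are handled exactly as in [Stu06b].
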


\begin{corollary}[Generalized Bonnet-Myers theorem] \label{bonnet}
Fix two real parameters $K>0$ and $N\geq 1$. Each metric measure space $(\mathsf{M},\mathsf{d},\mathsf{m})$ satisfying the condition $\mathsf{CD}^*(K,N)$ has compact support and its diameter $L$ has an upper bound
$$L\leq\pi\sqrt{\frac{N}{K}}.$$
\end{corollary}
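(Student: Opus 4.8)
The plan is to obtain the diameter bound directly from the generalized Brunn--Minkowski inequality (Proposition \ref{brunn}), using from Theorem \ref{bishop} that the support is proper, so that bounded closed subsets of $\mathsf{supp}(\m)$ have finite volume. The feature that makes the constant come out as $\pi\sqrt{N/K}$ — rather than the classical $\pi\sqrt{(N-1)/K}$ — is precisely the defining property of the coefficients $\sigma^{(t)}_{K,N}(\theta)$: they are built from $\mathfrak{S}_{K/N}$, and hence $\sigma^{(1/2)}_{K,N}(\theta)=\infty$ exactly when $K\theta^2\geq N\pi^2$.

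First I would argue by contradiction. Suppose $L:=\mathsf{diam}(\M,\mathsf{d},\m)>\pi\sqrt{N/K}$. By the definition of the diameter there are points $p,q\in\mathsf{supp}(\m)$ with $\pi\sqrt{N/K}<\mathsf{d}(p,q)<\infty$. Fix $\varepsilon>0$ so small that $K\bigl(\mathsf{d}(p,q)-2\varepsilon\bigr)^2>N\pi^2$, and put $A_0:=B_\varepsilon(p)$, $A_1:=B_\varepsilon(q)$. Since $p,q\in\mathsf{supp}(\m)$ we have $\m(A_0),\m(A_1)>0$, and by the triangle inequality the quantity $\Theta:=\inf_{x_0\in A_0,x_1\in A_1}\mathsf{d}(x_0,x_1)$ from Proposition \ref{brunn} (here $K>0$) satisfies $\Theta\geq \mathsf{d}(p,q)-2\varepsilon$, so $K\Theta^2\geq N\pi^2$ and therefore $\sigma^{(1/2)}_{K,N}(\Theta)=\infty$.

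Next I would apply Proposition \ref{brunn} with $t=1/2$ to these sets, which gives
\[
\m(A_{1/2})\geq \sigma^{(1/2)}_{K,N}(\Theta)\cdot\m(A_0)^{1/N}+\sigma^{(1/2)}_{K,N}(\Theta)\cdot\m(A_1)^{1/N}=\infty,
\]
since $\m(A_0)^{1/N}$ and $\m(A_1)^{1/N}$ are strictly positive (and this is so regardless of whether $A_{1/2}$ happens to be empty). On the other hand, $A_{1/2}$ is bounded: every point of $A_{1/2}$ is a midpoint of a geodesic with endpoints $y_0\in A_0$, $y_1\in A_1$, hence lies within $\tfrac12\mathsf{d}(y_0,y_1)+\varepsilon\leq\tfrac12\mathsf{d}(p,q)+2\varepsilon$ of $p$. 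Thus $\overline{A_{1/2}}\cap\mathsf{supp}(\m)$ is a bounded closed subset of $\mathsf{supp}(\m)$, which by Theorem \ref{bishop} is compact and has finite volume; since $\m$ is concentrated on $\mathsf{supp}(\m)$ this forces $\m(A_{1/2})<\infty$, a contradiction. Hence $L\leq\pi\sqrt{N/K}$. Compactness of $\mathsf{supp}(\m)$ then follows at once: it is closed and, by the bound just proved, bounded, so it is compact by Theorem \ref{bishop}.

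The argument is essentially routine; the only step requiring attention is ruling out a bounded set of infinite measure, which is exactly the place where the properness supplied by Theorem \ref{bishop} (equivalently Remark \ref{remark}(iv)) is invoked. As an alternative for $N>1$, one may instead combine Proposition \ref{implications}(ii) — which upgrades $\mathsf{CD}^*(K,N)$ to $\mathsf{CD}(K^*,N)$ with $K^*=\tfrac{K(N-1)}{N}$ — with the classical Bonnet--Myers theorem for $\mathsf{CD}(K^*,N)$ from \cite{sb}, obtaining $L\leq\pi\sqrt{(N-1)/K^*}=\pi\sqrt{N/K}$; the direct argument above has the advantage of also covering the boundary case $N=1$.
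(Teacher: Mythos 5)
Your main argument is correct and takes a genuinely different route from the paper. The paper, as announced at the start of Section 6.1, derives Corollary~\ref{bonnet} immediately from Proposition~\ref{implications}(ii) -- which upgrades $\mathsf{CD}^*(K,N)$ to $\mathsf{CD}(K^*,N)$ with $K^*=\tfrac{K(N-1)}{N}$ -- combined with the sharp Bonnet--Myers theorem for $\mathsf{CD}(K^*,N)$ from \cite{sb}; this is precisely the alternative you sketch in your final paragraph. Your main argument instead runs the classical Bonnet--Myers deduction directly through the generalized Brunn--Minkowski inequality (Proposition~\ref{brunn}): applying it with $t=\tfrac12$ to small balls about two far-apart points of $\mathsf{supp}(\m)$ yields a bounded set of midpoints $A_{1/2}$ whose measure the inequality forces to be infinite, contradicting the finiteness of volume on bounded subsets of $\mathsf{supp}(\m)$ guaranteed by Theorem~\ref{bishop}. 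Your version is a bit longer but more self-contained, and, as you correctly observe, it also covers the boundary case $N=1$, where the paper's reduction collapses to $K^*=0$ and hence to $\mathsf{CD}(0,N)$, which yields no diameter bound. One minor remark: inequality (\ref{gbm}) as printed appears to be missing an exponent $1/N$ on the left-hand side (compare with \cite{sb}); this typo is immaterial for you, since your conclusion $\m(A_{1/2})=\infty$ holds in either reading.
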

\
\\
Note that in the sharp version of this estimate the factor $N$ is replaced by $N-1$.

\subsection{Lichnerowicz Estimate}

In this subsection we follow the presentation of Lott and Villani in \cite{lva}.

\begin{definition}
Given $f\in\mathsf{Lip}(\M)$, we define $|\nabla^-f|$ by
$$|\nabla^-f|(x):=\underset{y\to x}{\limsup}\frac{[f(y)-f(x)]_-}{\mathsf{d}(x,y)}$$
where for $a\in\R$, $a_-:=\max(-a,0)$.
\end{definition}

\begin{theorem}[Lichnerowicz estimate, Poincar\'e inequality] \label{lichnerowicz}
We assume that $(\M,\mathsf{d},\m)$ satisfies $\mathsf{CD}^*(K,N)$ for two real parameters $K>0$ and $N\geq 1$. Then for every $f\in\mathsf{Lip}(\M)$ fulfilling $\int_\M fd\m=0$ the following inequality holds true,
\begin{equation} \label{spectgap}
\int_\M f^2d\m\leq\frac{1}{K}\int_\M|\nabla^-f|^2d\m.
\end{equation}
\end{theorem}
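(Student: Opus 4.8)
The plan is to derive Theorem~\ref{lichnerowicz} from the \emph{sharp} Lichnerowicz estimate already established for the original curvature-dimension condition in \cite{sb}, by way of the implication $\mathsf{CD}^*(K,N)\Rightarrow\mathsf{CD}(K^*,N)$ of Proposition~\ref{implications}(ii). Set $K^*:=\tfrac{K(N-1)}{N}$; since $K>0$ and (in the interesting case) $N>1$, we have $K^*>0$, so Proposition~\ref{implications}(ii) gives that $(\M,\mathsf{d},\m)$ satisfies $\mathsf{CD}(K^*,N)$. By Corollary~\ref{bonnet} and Theorem~\ref{bishop} the support of $\m$ is then compact and $\m$ has finite total mass; since the constraint $\int_\M f\,d\m=0$ and both sides of (\ref{spectgap}) are unchanged under rescaling $\m$, we may normalize $\m$ to a probability measure.

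Next I would simply quote the sharp estimate: a space satisfying $\mathsf{CD}(K^*,N)$ with $K^*>0$ obeys, for every $f\in\mathsf{Lip}(\M)$ with $\int_\M f\,d\m=0$,
\begin{equation*}
\int_\M f^2\,d\m\leq\frac{N-1}{N\,K^*}\int_\M|\nabla^-f|^2\,d\m .
\end{equation*}
Substituting $K^*=\tfrac{K(N-1)}{N}$ collapses the prefactor to $\tfrac{N-1}{NK^*}=\tfrac1K$, which is exactly (\ref{spectgap}). This constant exceeds the sharp one $\tfrac{N-1}{NK}$ attached to $\mathsf{CD}(K,N)$ by precisely the factor $\tfrac{N}{N-1}$, in accordance with $\mathsf{CD}^*(K,N)$ being strictly weaker than $\mathsf{CD}(K,N)$. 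The borderline case $N=1$, where $K^*=0$, is not reached by this reduction and would have to be dealt with separately.

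Along this route there is essentially no obstacle beyond verifying the hypotheses of the cited theorem, done above. Difficulty would appear only in a self-contained proof: following Lott and Villani, one tests the displacement-convexity inequality of Proposition~\ref{nb}(iii) on $\nu_0=(1+\varepsilon f)\m$ and $\nu_1=(1-\varepsilon f)\m$, expands $\mathsf{S}_N$ to second order in $\varepsilon$ (the first-order term dropping out because $\int_\M f\,d\m=0$), and bounds $\mathsf{d_W}(\nu_0,\nu_1)^2$ from above by $4\varepsilon^2\int_\M|\nabla^-f|^2\,d\m+o(\varepsilon^2)$; the second-order Taylor coefficient of $\theta\mapsto\sigma^{(1/2)}_{K,N}(\theta)$ at $\theta=0$ then supplies exactly the factor $K$. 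The genuinely delicate points there are this Wasserstein bound and the fact that $|\nabla^-f|$ is only a one-sided relaxed modulus of the gradient, so that one argues with approximations of $f$ and passes to the limit. I would, however, carry out the proof by the reduction, which is immediate.
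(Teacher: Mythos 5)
Your proposal is correct and coincides with the paper's own proof: the paper explicitly notes in Section~6 that Theorem~\ref{lichnerowicz} ``follows immediately from the strong versions in \cite{sb} in combination with Proposition~\ref{implications}(ii),'' which is precisely your reduction, and your substitution $\frac{N-1}{NK^*}=\frac{1}{K}$ with $K^*=\frac{K(N-1)}{N}$ is the right computation. Your worry about the borderline case $N=1$ can be dispatched without a separate argument: by Definition~\ref{loccurv}(i) the inequality (\ref{ren}) is required for all $N'\geq N$, so $\mathsf{CD}^*(K,1)\Rightarrow\mathsf{CD}^*(K,N')$ for every $N'>1$, and since the resulting Poincar\'e constant $\frac{N'-1}{N'K^*_{N'}}=\frac{1}{K}$ is independent of $N'$, the reduction already covers $N=1$.
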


\begin{remark}
In \textquoteleft regular\textquoteright \ cases, {\large $\varepsilon$}$(f,f):=\int_\M|\nabla^-f|^2 \ d\mathsf{m}$ is a quadratic form which -- by polarization -- then defines uniquely a bilinear form {\large $\varepsilon$}$(f,g)$ and a self-adjoint operator $L$ (\textquoteleft generalized Laplacian\textquoteright) through the identity {\large $\varepsilon$}$(f,g)=-\int_\M f\cdot Lg \ d\mathsf{m}$.

The inequality (\ref{spectgap}) means that $L$ admits a spectral gap $\lambda_1$ of size at least $K$,
$$\lambda_1\geq K.$$
In the sharp version, corresponding to the case where $(\M,\mathsf{d},\m)$ satisfies $\mathsf{CD}(K,N)$, the spectral gap is bounded from below by $K\frac{N}{N-1}$.
\end{remark}

\section{Universal Coverings of Metric Measure Spaces}

\subsection{Coverings and Liftings}

Let us recall some basic definitions and properties of coverings of metric (or more generally, topological) spaces. For further details we refer to \cite{bi}.

\begin{definition}[Covering]
\begin{itemize}
\item[(i)] Let $E$ and $X$ be topological spaces and $p:E\to X$ a continuous map. An open set $V\subseteq X$ is said to be evenly covered by $p$ if and only if its inverse image $p^{-1}(V)$ is a disjoint union of sets $U_i\subseteq E$ such that the restriction of $p$ to $U_i$ is a homeomorphism from $U_i$ to $V$ for each $i$ in a suitable indexset $I$. The map $p$ is a covering map (or simply covering) if and only if every point $x\in X$ has an evenly covered neighborhood. In this case, the space $X$ is called the base of the covering and $E$ the covering space.
\item[(ii)] A covering map $p:E\to X$ is called a universal covering if and only if $E$ is simply connected. In this case, $E$ is called universal covering space for $X$.
\end{itemize}
\end{definition}

The existence of a universal covering is guaranteed under some weak topological assumptions. More precisely:

\begin{theorem} \label{existence}
 If a topological space $X$ is connected, locally pathwise connected and semi-locally simply connected, then there exists a universal covering
$p:E\to X$.
\end{theorem}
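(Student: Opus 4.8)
The plan is to carry out the classical path-class construction of the universal covering space. First I would fix a basepoint $x_0\in X$ and let $E$ be the set of homotopy classes, relative to endpoints, of continuous paths $\gamma\colon[0,1]\to X$ with $\gamma(0)=x_0$, together with the map $p\colon E\to X$, $p([\gamma]):=\gamma(1)$. Connectedness and local pathwise connectedness of $X$ make $X$ pathwise connected, hence $p$ is surjective. To topologize $E$ I would use the semi-local simple connectivity: each point of $X$ has a pathwise connected open neighborhood $U$ for which the inclusion-induced map $\pi_1(U,u)\to\pi_1(X,u)$ is trivial; call such $U$ \emph{admissible}, and note admissible sets cover $X$. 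For admissible $U$ and a path $\gamma$ ending in $U$, put
$$\langle U,[\gamma]\rangle:=\bigl\{\,[\gamma\ast\delta]\;:\;\delta\text{ a path in }U,\ \delta(0)=\gamma(1)\,\bigr\}.$$
I would then verify that the family of all such sets is a basis for a topology on $E$; the crucial point is that $[\gamma']\in\langle U,[\gamma]\rangle$ forces $\langle U,[\gamma']\rangle=\langle U,[\gamma]\rangle$, which is exactly where triviality of $\pi_1(U)\to\pi_1(X)$ is used.

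Next I would show that every admissible $U$ is evenly covered. The restriction of $p$ to $\langle U,[\gamma]\rangle$ is a bijection onto $U$ (surjectivity from pathwise connectedness of $U$, injectivity from admissibility of $U$), it is open and continuous because $U$ is locally pathwise connected, hence a homeomorphism; and $p^{-1}(U)$ is the \emph{disjoint} union of the sets $\langle U,[\gamma]\rangle$ as $[\gamma]$ ranges over the distinct classes lying above points of $U$. Therefore $p$ is a covering map. Here local pathwise connectedness is also what guarantees that $E$ is locally pathwise connected, so that the topology behaves well.

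Then I would prove that $E$ is simply connected. Pathwise connectedness is immediate: for any $[\gamma]\in E$ the assignment $s\mapsto[\gamma_s]$, where $\gamma_s(u):=\gamma(su)$, is a continuous path in $E$ from the class $[c_{x_0}]$ of the constant path to $[\gamma]$. For triviality of $\pi_1(E,[c_{x_0}])$, take a loop $\tilde\sigma$ in $E$ at $[c_{x_0}]$ and set $\sigma:=p\circ\tilde\sigma$. Both $\tilde\sigma$ and $s\mapsto[\sigma_s]$ are lifts of $\sigma$ starting at $[c_{x_0}]$, so by uniqueness of path lifts (valid for any covering map) they coincide; evaluating at $s=1$ and using that $\tilde\sigma$ is a loop gives $[\sigma]=[c_{x_0}]$, i.e.\ $\sigma$ is null-homotopic rel endpoints in $X$. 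Lifting that null-homotopy through the covering $p$ (homotopy lifting property) shows $\tilde\sigma$ is null-homotopic in $E$. Hence $E$ is simply connected and $p$ is a universal covering.

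I expect the principal technical obstacle to be the verification that the sets $\langle U,[\gamma]\rangle$ form a basis and that $p$ is a local homeomorphism onto admissible neighborhoods: this is the one place where all three hypotheses interlock, and where the bookkeeping with concatenation and reparametrization of paths, as well as the disjointness of the sheets, has to be done carefully. Once $p$ is known to be a covering map, the simple connectivity of $E$ follows routinely from the standard path- and homotopy-lifting lemmas.
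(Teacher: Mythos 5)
The paper does not prove this theorem; it states it as a standard result and refers to Burago--Burago--Ivanov \cite{bi} for the terminology and (implicitly) the proof. Your proposal is the classical path-class construction of the universal cover, which is correct and is essentially the textbook argument the paper is implicitly invoking: base $E$ on homotopy classes rel endpoints of paths from a basepoint, topologize via the basic sets $\langle U,[\gamma]\rangle$ over admissible (semi-locally simply connected, pathwise connected) open sets, show each admissible $U$ is evenly covered, and deduce simple connectivity of $E$ from uniqueness of lifts and the homotopy lifting property. Your sketch correctly pinpoints where each hypothesis enters: local pathwise connectedness gives the basis of admissible sets and makes the sheets open, semi-local simple connectivity gives injectivity of $p$ on each sheet (equivalently, $\langle U,[\gamma']\rangle=\langle U,[\gamma]\rangle$ whenever $[\gamma']\in\langle U,[\gamma]\rangle$), and connectedness plus local pathwise connectedness give global path connectedness and hence surjectivity of $p$. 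No gaps; this is a faithful reproduction of the standard proof.
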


For the exact meaning of the assumptions we again refer to \cite{bi}.

\begin{example}
\begin{itemize}
\item[(i)] The  map $p:\R\to\mathsf{S^1}$ given by $p(x)=(\cos(x),\sin(x))$ is a covering map.
\item[(ii)] The universal covering of the torus by the plane $P:\R^2\to\mathsf{T^2}:=\mathsf{S^1}\times\mathsf{S^1}$ is given by $P(x,y):=(p(x),p(y))$ where $p(x)=(\cos(x),\sin(x))$ is defined as in (i).
\end{itemize}
\end{example}

We consider a covering $p:E\to X$. For $x\in X$ the set $p^{-1}(x)$ is called the \textit{fiber} over $x$. This is a discrete subspace of $E$ and every $x\in X$ has a neighborhood $V$ such that $p^{-1}(V)$ is homeomorphic to $p^{-1}(x)\times V$. The disjoint subsets of $p^{-1}(V)$ mapped homeomorphically onto $V$ are called the \textit{sheets} of $p^{-1}(V)$. If $V$ is connected, the sheets of $p^{-1}(V)$ coincide with the connected components of $p^{-1}(V)$. If $E$ and $X$ are connected, the cardinality of $p^{-1}(x)$ does not depend on $x\in X$ and is called the \textit{number of sheets}. This number may be infinity.

Every covering is a local homeomorphism which implies that $E$ and $X$ have the same local topological properties.

\begin{remark}
Consider length spaces $(E,\mathsf{d}_E)$ and $(X,\mathsf{d}_X)$ and a covering map $p:E\to X$ which is additionally a local isometry. If $X$ is complete, then so is $E$.
\end{remark}

We list two essential lifting statements in topology referring to \cite{bs} for further details and the proofs.

\begin{definition}
Let $\alpha,\beta:[0,1]\rightarrow X$ be two curves in $X$ with the same end points meaning that $\alpha(0)=\beta(0)=x_0\in X$ and $\alpha(1)=\beta(1)=x_1\in X$. We say that $\alpha$ and $\beta$ are homotopic relative to $\{0,1\}$ if and only if there exists a continuous map $H:[0,1]\times [0,1]\rightarrow X$ satisfying $H(t,0)=\alpha(t)$, $H(t,1)=\beta(t)$ as well as $H(0,t)=x_0$ and $H(1,t)=x_1$ for all $t\in[0,1]$. We call $H$ a homotopy from $\alpha$ to $\beta$ relative to $\{0,1\}$.
\end{definition}

\begin{theorem}[Path lifting theorem]
Let $p:E\to X$ be a covering and let $\gamma:[0,1]\to X$ be a curve in $X$. We assume that $e_0\in E$ satisfies $p(e_0)=\gamma(0)$. Then there exists a unique curve $\alpha:[0,1]\to E$ such that $\alpha(0)=e_0$ and $p\circ\alpha=\gamma$.
\end{theorem}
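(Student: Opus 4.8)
The plan is to construct the lift $\alpha$ on successive subintervals of $[0,1]$ using a partition adapted to an evenly covered cover of the image of $\gamma$, and then to prove uniqueness by a connectedness argument. First I would exploit compactness: the open sets of $X$ that are evenly covered by $p$ form an open cover of $X$ by hypothesis, hence their preimages under the continuous map $\gamma$ form an open cover of the compact interval $[0,1]$. By the Lebesgue number lemma there is a partition $0=t_0<t_1<\dots<t_m=1$ such that each segment $\gamma([t_{i-1},t_i])$ is contained in a single evenly covered open set $V_i\subseteq X$.

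Next I would lift inductively over the partition. Assume $\alpha$ has been defined and shown continuous on $[0,t_{i-1}]$ with $p\circ\alpha=\gamma$ there and $\alpha(0)=e_0$ (the base case $i=1$ is just $\alpha(0)=e_0$). Write $p^{-1}(V_i)=\bigsqcup_j U_{i,j}$ with each $p|_{U_{i,j}}$ a homeomorphism onto $V_i$. Since $p(\alpha(t_{i-1}))=\gamma(t_{i-1})\in V_i$, the point $\alpha(t_{i-1})$ lies in exactly one sheet $U_{i,j_0}$, and I define $\alpha(t):=(p|_{U_{i,j_0}})^{-1}(\gamma(t))$ for $t\in[t_{i-1},t_i]$. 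This is continuous on $[t_{i-1},t_i]$, agrees with the earlier value at $t_{i-1}$, and satisfies $p\circ\alpha=\gamma$; by the pasting lemma the extension is continuous on $[0,t_i]$. After $m$ steps this produces the desired continuous lift $\alpha:[0,1]\to E$.

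For uniqueness, let $\alpha,\beta:[0,1]\to E$ both lift $\gamma$ with $\alpha(0)=\beta(0)=e_0$, and set $S:=\{t\in[0,1]:\alpha(t)=\beta(t)\}$, which is nonempty since $0\in S$. Given any $t\in[0,1]$, choose an evenly covered neighborhood $V$ of $\gamma(t)$ and, using continuity of $\alpha$ and $\beta$ together with the fact that the sheets of $p^{-1}(V)$ are open, a neighborhood $I$ of $t$ in $[0,1]$ such that $\alpha(I)$ and $\beta(I)$ each lie inside a single sheet. If $t\in S$ then $\alpha(I)$ and $\beta(I)$ lie in the \emph{same} sheet $U$; as $p|_U$ is injective and $p\circ\alpha=p\circ\beta$ on $I$, we get $\alpha=\beta$ on $I$, so $I\subseteq S$. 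If $t\notin S$ then $\alpha(t)$ and $\beta(t)$ lie in distinct, hence disjoint, sheets, so $\alpha\ne\beta$ throughout $I$ and $I\cap S=\emptyset$. Thus $S$ is clopen in the connected interval $[0,1]$, whence $S=[0,1]$ and $\alpha=\beta$.

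I do not expect any serious obstacle here: the construction is the standard bookkeeping of gluing local inverses of $p$, and the only point requiring slight care is the uniqueness step, where one must use that the sheets over an evenly covered set are \emph{disjoint open} sets to see that the agreement set $S$ is simultaneously open and closed — this is exactly where the covering hypothesis (rather than mere local homeomorphism) and the connectedness of $[0,1]$ both enter.
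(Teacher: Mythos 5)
Your proof is correct; it is the canonical textbook argument: existence via the Lebesgue number lemma applied to the pullback under $\gamma$ of an evenly covered open cover, followed by inductive gluing of local inverses of $p$ over the partition, and uniqueness via the observation that the agreement set of two lifts is a nonempty clopen subset of the connected interval $[0,1]$. Note, however, that the paper itself does not prove this theorem at all --- it merely states it and defers to the reference [BS] (``We list two essential lifting statements in topology referring to \cite{bs} for further details and the proofs''), so there is no in-paper argument to compare against; your write-up supplies exactly the standard proof that reference would contain, with the one subtlety you correctly isolate (that the sheets over an evenly covered set are disjoint \emph{and} open, which is what makes the agreement set simultaneously open and closed) handled properly.
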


\begin{theorem}[Homotopy lifting theorem]
Let $p:E\to X$ be a covering, let $\gamma_0,\gamma_1:[0,1]\to X$ be two curves in $X$ with starting point $x_0\in X$ and terminal point $x_1\in X$, and let $\alpha_0,\alpha_1:[0,1]\to E$ be the lifted curves such that $\alpha_0(0)=\alpha_1(0)$. Then every homotopy $H:[0,1]\times [0,1]\to X$ from $\gamma_0$ to $\gamma_1$ relative to $\{0,1\}$ can be lifted in a unique way to a homotopy $H':[0,1]\times [0,1]\to E$ from $\alpha_0$ to $\alpha_1$ relative to $\{0,1\}$ satisfying $H'(0,0)=\alpha_0(0)=\alpha_1(0)$.
\end{theorem}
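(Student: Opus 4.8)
The plan is to construct the lifted homotopy $H'$ directly on the square $Q:=[0,1]\times[0,1]$ by subdividing $Q$ finely enough that $H$ maps each small piece into an evenly covered open set, and to use the path lifting theorem only at the end to pin down the boundary behaviour. First I would pull the covering back along $H$: since every point of $X$ has an evenly covered neighbourhood, the sets $H^{-1}(V)$, with $V\subseteq X$ evenly covered, form an open cover of the compact metric space $Q$. By the Lebesgue number lemma there is $n\in\mathbb{N}$ such that for all $i,j\in\{1,\dots,n\}$ the closed subsquare $Q_{ij}:=\left[\tfrac{i-1}{n},\tfrac{i}{n}\right]\times\left[\tfrac{j-1}{n},\tfrac{j}{n}\right]$ satisfies $H(Q_{ij})\subseteq V_{ij}$ for some evenly covered open set $V_{ij}$.

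Next I would define $H'$ by induction over the subsquares, ordered lexicographically (first the bottom row $j=1$ from $i=1$ to $i=n$, then the row $j=2$, and so on). When the step treating $Q_{ij}$ is reached, the portion of $\partial Q_{ij}$ on which $H'$ has already been defined -- its bottom edge when $j>1$ together with its left edge when $i>1$, and just the single point $(0,0)$ in the initial case $Q_{11}$ -- is a nonempty connected subset of $Q_{ij}$; for $Q_{11}$ we additionally prescribe $H'(0,0):=\alpha_0(0)=\alpha_1(0)$. Writing $p^{-1}(V_{ij})=\bigsqcup_{k}U_k$ with each $p|_{U_k}:U_k\to V_{ij}$ a homeomorphism, this connected piece lies in exactly one sheet $U_{k_0}$, and I set $H'|_{Q_{ij}}:=(p|_{U_{k_0}})^{-1}\circ H|_{Q_{ij}}$. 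This definition is continuous, satisfies $p\circ H'=H$ on $Q_{ij}$, and agrees on the overlap with the values already constructed, because any lift of a connected set into $p^{-1}(V_{ij})$ is determined by its value at a single point. After all $n^2$ subsquares are exhausted, the pasting lemma yields a single continuous map $H':Q\to E$ with $p\circ H'=H$.

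It then remains to verify the boundary conditions and uniqueness, all via the path lifting theorem. The restriction $t\mapsto H'(t,0)$ is a continuous lift of $\gamma_0=H(\cdot,0)$ starting at $\alpha_0(0)$, hence equals $\alpha_0$; similarly $s\mapsto H'(0,s)$ is a lift of the constant path $H(0,\cdot)\equiv x_0$ starting at $\alpha_0(0)$, hence is constant, so $H'(0,1)=\alpha_0(0)=\alpha_1(0)$; consequently $t\mapsto H'(t,1)$ is a lift of $\gamma_1$ starting at $\alpha_1(0)$ and therefore equals $\alpha_1$, while $s\mapsto H'(1,s)$ is a lift of the constant path $H(1,\cdot)\equiv x_1$ and hence constant. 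Thus $H'$ is a homotopy from $\alpha_0$ to $\alpha_1$ relative to $\{0,1\}$ (and in particular $\alpha_0(1)=\alpha_1(1)$ is forced). For uniqueness, if $H''$ is another lift of $H$ with $H''(0,0)=\alpha_0(0)$, then $H''(\cdot,0)=\alpha_0$ by the path lifting theorem, and for each fixed $t$ the curve $s\mapsto H''(t,s)$ is the unique lift of $H(t,\cdot)$ starting at $\alpha_0(t)$, which is exactly $H'(t,\cdot)$; hence $H''=H'$.

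I expect the only genuinely delicate point to be the inductive construction: one has to order the subsquares so that at every stage the already-lifted part of the new square's boundary is nonempty and connected -- this is what makes the choice of sheet $U_{k_0}$ unambiguous and forces the successive pieces to agree. Once that bookkeeping is set up correctly, the remainder is a routine combination of compactness, the pasting lemma, and the (already available) path lifting theorem.
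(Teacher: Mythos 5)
The paper does not prove this theorem; it merely states it and refers the reader to a textbook reference for details, so there is no in-paper argument to compare against. Your proof is correct and is the standard textbook argument: subdivide $[0,1]^2$ via the Lebesgue number lemma into subsquares each mapped by $H$ into an evenly covered set, lift inductively in an order chosen so that the already-lifted part of each new subsquare's boundary is nonempty and connected (which, as you correctly identify as the crux, forces the choice of sheet and makes the pieces agree on overlaps), and then invoke uniqueness of path lifts to obtain $H'(\cdot,0)=\alpha_0$, $H'(\cdot,1)=\alpha_1$, the constancy of $H'(0,\cdot)$ and $H'(1,\cdot)$, and the uniqueness of $H'$.
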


We consider a universal covering $p:E\to X$ and distinguished points $x_0\in X$ as well as $e_0\in p^{-1}(x_0)\subseteq E$. The above lifting theorems enable us to define a function
$$\Phi:\pi_1(X,x_0)\to p^{-1}(x_0)$$
such that for $[\gamma]\in\pi_1(X,x_0)$, $\Phi([\gamma])$ is the (unique) terminal point of the lift of $\gamma$ to $E$ starting at $e_0$. Then $\Phi$ has the following property:

\begin{theorem}[Cardinality of fibers] \label{card}
The function $\Phi$ is a one-to-one correspondence of the fundamental group $\pi_1(X,x_0)$ and the fiber $p^{-1}(x_0)$.
\end{theorem}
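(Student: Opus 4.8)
The plan is to check separately that $\Phi$ is well-defined on homotopy classes, surjective, and injective, using the Path lifting theorem, the Homotopy lifting theorem, and the simple connectedness of the universal covering space $E$.

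First I would verify that $\Phi$ is well-defined. Given a loop $\gamma$ based at $x_0$, the Path lifting theorem yields a unique lift $\tilde\gamma$ with $\tilde\gamma(0)=e_0$; since $p(\tilde\gamma(1))=\gamma(1)=x_0$, the terminal point $\tilde\gamma(1)$ lies in $p^{-1}(x_0)$, so the assignment makes sense on the level of curves. If now $\gamma_0$ and $\gamma_1$ are homotopic relative to $\{0,1\}$, the Homotopy lifting theorem lifts the homotopy to a homotopy relative to $\{0,1\}$ between the lifts $\tilde\gamma_0$ and $\tilde\gamma_1$ starting at $e_0$; in particular $\tilde\gamma_0(1)=\tilde\gamma_1(1)$, so $\Phi$ descends to a map on $\pi_1(X,x_0)$.

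Next, for surjectivity I would use that $E$, being a covering space of the connected, locally pathwise connected space $X$, is itself connected and locally pathwise connected, hence pathwise connected. Given $e_1\in p^{-1}(x_0)$, pick a curve $\alpha$ in $E$ from $e_0$ to $e_1$ and set $\gamma:=p\circ\alpha$. Then $\gamma$ is a loop based at $x_0$, and by the uniqueness clause of the Path lifting theorem $\alpha$ is precisely the lift of $\gamma$ starting at $e_0$; hence $\Phi([\gamma])=\alpha(1)=e_1$. For injectivity, suppose $\Phi([\gamma_0])=\Phi([\gamma_1])=:e_1$, so the lifts $\tilde\gamma_0,\tilde\gamma_1$ starting at $e_0$ have the common terminal point $e_1$. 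Then the concatenation $\tilde\gamma_0\ast\overline{\tilde\gamma_1}$ (with $\overline{\,\cdot\,}$ the reversed curve) is a loop in $E$ based at $e_0$, which is null-homotopic relative to $\{0,1\}$ because $E$ is simply connected. Pushing this homotopy forward through $p$ produces a homotopy relative to $\{0,1\}$ in $X$ from $\gamma_0\ast\overline{\gamma_1}$ to the constant loop at $x_0$, so $\gamma_0$ and $\gamma_1$ are homotopic relative to $\{0,1\}$ and $[\gamma_0]=[\gamma_1]$.

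There is essentially no serious obstacle here; the argument is purely formal once the two lifting theorems are available. The only points demanding a little care are the justification of path-connectedness of $E$ (needed for surjectivity) and the correct use of simple connectedness of $E$ together with the fact that $p$ carries homotopies to homotopies (needed for injectivity); keeping the concatenation and orientation conventions consistent is routine bookkeeping.
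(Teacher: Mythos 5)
The paper does not supply a proof of this theorem; it is stated as a classical fact, with the reference \cite{bs} cited for details and proofs of the lifting machinery. Your argument is the standard textbook proof, and it is correct: well-definedness follows from the two lifting theorems, surjectivity from path-connectedness of $E$ together with the uniqueness clause of path lifting, and injectivity from simple connectedness of $E$ together with the fact that $p$ maps homotopies rel $\{0,1\}$ in $E$ to homotopies rel $\{0,1\}$ in $X$.

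One small slip worth correcting: you justify path-connectedness of $E$ by arguing that a covering space of a connected, locally pathwise connected space is itself connected. That implication is false in general (a disconnected disjoint union of copies of $X$ is a covering of $X$). What you actually need is already built into the hypothesis: by the paper's Definition~7.1(ii), a universal covering space $E$ is simply connected, and simple connectedness includes path-connectedness. So surjectivity should simply invoke that $E$ is path-connected because it is simply connected; the rest of your surjectivity argument (choose $\alpha$ from $e_0$ to $e_1$, project to a loop $\gamma = p\circ\alpha$, use uniqueness of lifts) is fine. Local pathwise connectedness of $E$ (inherited from $X$ via the local homeomorphism $p$) is still useful elsewhere but is not the reason $E$ is path-connected.
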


\subsection{Lifted Metric Measure Spaces}

We consider now a non-branching metric measure space $(\mathsf{M},\mathsf{d},\mathsf{m})$ satisfying the reduced curvature-dimension condition $\mathsf{CD}^*(K,N)$ locally for two real parameters $K>0$ and $N\geq 1$ and a distinguished point $x_0\in\mathsf{M}$. Moreover, we assume that $(\mathsf{M},\mathsf{d})$ is a semi-locally simply connected length space. Then, according to Theorem \ref{existence}, there exists a universal covering $p:\mathsf{\hat M}\to\mathsf{M}$. The covering space $\mathsf{\hat M}$ inherits the length structure of the base $\mathsf{M}$ in the following way: We say that a curve $\hat\gamma$ in $\mathsf{\hat M}$ is ``admissible'' if and only if its composition with $p$ is a continuous curve in $\mathsf{M}$. The length $\mathsf{Length}(\hat\gamma)$ of an admissible curve in $\mathsf{\hat M}$ is set to the length of $p\circ\hat\gamma$ with respect to the length structure in $\mathsf{M}$. For two points $x,y\in\mathsf{\hat M}$ we define the associated distance $\mathsf{\hat d}(x,y)$ between them to be the infimum of lengths of admissible curves in $\mathsf{\hat M}$ connecting these points:
\begin{equation} \label{lift_d}
\mathsf{\hat d}(x,y):=\inf\{\mathsf{Length}(\hat\gamma)|\hat\gamma:[0,1]\to\mathsf{\hat M} \ \mathrm{admissible},\hat\gamma(0)=x, \hat\gamma(1)=y\}.
\end{equation}
Endowed with this metric, $p:(\mathsf{\hat M},\mathsf{\hat d})\to(\mathsf{M},\mathsf{d})$ is a local isometry.

Now, let $\xi$ be the family of all sets $\hat E\subseteq\mathsf{\hat M}$ such that the restriction of $p$ onto $\hat E$ is a local isometry from $\hat E$ to a measurable set $E:=p(\hat E)$ in $\mathsf{M}$. This family $\xi$ is stable under intersections, and the smallest $\sigma$-algebra $\sigma(\xi)$ containing $\xi$ is equal to the Borel-$\sigma$-algebra $\mathcal{B}(\mathsf{\hat M})$ according to the local compactness of $(\mathsf{\hat M},\mathsf{\hat d})$. We define a function $\mathsf{\hat m}:\xi\to[0,\infty[$ by $\mathsf{\hat m}(\hat E)=\mathsf{m}(p(\hat E))=\mathsf{m}(E)$ and extend it in a unique way to a measure $\mathsf{\hat m}$ on $(\mathsf{\hat M},\mathcal{B}(\mathsf{\hat M}))$.

\begin{definition}
\begin{itemize}
\item[(i)]We call the metric $\mathsf{\hat d}$ on $\mathsf{\hat M}$ defined in (\ref{lift_d}) the lift of the metric $\mathsf{d}$ on $\mathsf{M}$.
\item[(ii)]The measure $\mathsf{\hat m}$ on $(\mathsf{\hat M},\mathcal{B}(\mathsf{\hat M}))$ constructed as described above is called the lift of $\mathsf{m}$.
\item[(iii)] We call the metric measure space $(\mathsf{\hat M},\mathsf{\hat d},\mathsf{\hat m})$ the lift of $(\mathsf{M},\mathsf{d},\mathsf{m})$.
\end{itemize}
\end{definition}

\begin{theorem}[Lift]
Assume that $(\mathsf{M},\mathsf{d},\mathsf{m})$ is a non-branching metric measure space satisfying $\mathsf{CD}^*_{\mathsf{loc}}(K,N)$ for two real parameters $K>0$ and $N\geq 1$ and that $(\mathsf{M},\mathsf{d})$ is a semi-locally simply connected length space. Let $\mathsf{\hat M}$ be a universal covering space for $\mathsf{M}$ and let $(\mathsf{\hat M},\mathsf{\hat d},\mathsf{\hat m})$ be the lift of $(\mathsf{M},\mathsf{d},\mathsf{m})$. Then,
\begin{itemize}
\item[(i)]$(\mathsf{\hat M},\mathsf{\hat d},\mathsf{\hat m})$ has compact support and its diameter has an upper bound
$$\mathsf{diam}(\mathsf{\hat M},\mathsf{\hat d},\mathsf{\hat m})\leq\pi\sqrt{\frac{N}{K}}.$$
\item[(ii)]The fundamental group $\pi_1(\mathsf{M},x_0)$ of $(\mathsf{M},\mathsf{d},\mathsf{m})$ is finite.
\end{itemize}
\end{theorem}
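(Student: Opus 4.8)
The plan is to lift the curvature-dimension structure from $(\M,\mathsf d,\m)$ to the universal cover $(\mathsf{\hat M},\mathsf{\hat d},\mathsf{\hat m})$, apply the localization theorem (Theorem~\ref{glob}) and the generalized Bonnet--Myers bound (Corollary~\ref{bonnet}) upstairs, and then read off (i) directly while deducing (ii) from compactness of the covering space together with discreteness of the fibres. First I would record the structural properties of the lift. Since $p\colon(\mathsf{\hat M},\mathsf{\hat d})\to(\M,\mathsf d)$ is a covering map and a local isometry and $\mathsf{\hat m}$ is built so that $p$ is locally measure preserving, every point of $\mathsf{\hat M}$ has a neighbourhood isomorphic, as a metric measure space, to an open subset of $\M$; in particular $p$ is an open, locally measure preserving map, so $\operatorname{supp}(\mathsf{\hat m})=p^{-1}(\operatorname{supp}(\m))$. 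Moreover $\mathsf{\hat M}$ is complete (a local-isometric covering of a complete space is complete) and $\operatorname{supp}(\mathsf{\hat m})$ is locally compact (Remark~\ref{remark}(iv), transported through $p$). Using the patching machinery underlying Theorem~\ref{glob} one checks that $\operatorname{supp}(\m)$, and hence its local-isometric covering $\operatorname{supp}(\mathsf{\hat m})$, is proper, and that $\mathcal P_\infty(\mathsf{\hat M},\mathsf{\hat d},\mathsf{\hat m})$ is a geodesic space.

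Next I would verify that the lift is again non-branching and again satisfies $\mathsf{CD}^*_{\mathsf{loc}}(K,N)$. Non-branching passes to the cover: a hypothetical branch point $\hat z$ of $\mathsf{\hat M}$ projects, through an evenly covered neighbourhood of $\hat z$, to a configuration of short local geodesics of $\M$ issuing from $p(\hat z)$; since sufficiently short local geodesics are honest geodesics of $\M$, a midpoint triple at $p(\hat z)$ already contradicts the non-branching of $\M$, once the resulting coincidence is propagated to the endpoints along the geodesics by a connectedness argument and lifted back by uniqueness of path lifts. For the local condition the delicate point is that the Wasserstein geodesic provided by $\mathsf{CD}^*_{\mathsf{loc}}(K,N)$ on $\M$ need not stay inside the prescribed neighbourhood (Remark~\ref{remark}(iii)); but it is induced by a dynamical optimal plan concentrated on honest geodesics of the proper space $\operatorname{supp}(\m)$. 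Hence if $\hat\nu_0,\hat\nu_1\in\mathcal P_\infty(\mathsf{\hat M},\mathsf{\hat d},\mathsf{\hat m})$ are supported in a ball $B_r(\hat x)$ with $3r$ below the radius of an evenly covered ball contained in the local-CD neighbourhood of $p(\hat x)$, the geodesics carrying the connecting Wasserstein geodesic downstairs remain in $B_{3r}(p(\hat x))$, so the whole geodesic stays there, and its unique lift through the relevant sheet is a geodesic in $\mathcal P_\infty(\mathsf{\hat M},\mathsf{\hat d},\mathsf{\hat m})$ joining $\hat\nu_0$ and $\hat\nu_1$ and satisfying~(\ref{ren}) --- every term appearing in~(\ref{ren}) being preserved by the local isometry. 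Thus $(\mathsf{\hat M},\mathsf{\hat d},\mathsf{\hat m})$ satisfies $\mathsf{CD}^*_{\mathsf{loc}}(K,N)$.

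With these inputs Theorem~\ref{glob} applies to $(\mathsf{\hat M},\mathsf{\hat d},\mathsf{\hat m})$ and yields the global condition $\mathsf{CD}^*(K,N)$. Statement (i) is then exactly the generalized Bonnet--Myers theorem (Corollary~\ref{bonnet}) applied to $\mathsf{\hat M}$: $\operatorname{supp}(\mathsf{\hat m})$ is compact and $\operatorname{diam}(\mathsf{\hat M},\mathsf{\hat d},\mathsf{\hat m})\le\pi\sqrt{N/K}$. For (ii) I would take the base point in $\operatorname{supp}(\m)$, which is harmless since $\M$ is path-connected and $\pi_1$ is independent of the base point up to isomorphism. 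Then $p^{-1}(x_0)\subseteq p^{-1}(\operatorname{supp}(\m))=\operatorname{supp}(\mathsf{\hat m})$, which is compact by (i); since $p^{-1}(x_0)$ is closed (the preimage of a point under the continuous map $p$) and discrete (a fibre of a covering), it is finite. By Theorem~\ref{card} the map $\Phi$ is a bijection of $\pi_1(\M,x_0)$ onto $p^{-1}(x_0)$, so $\pi_1(\M,x_0)$ is finite.

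The main obstacle is the transfer of $\mathsf{CD}^*_{\mathsf{loc}}(K,N)$ to the cover: one must circumvent the fact (Remark~\ref{remark}(iii)) that the local condition does not confine the interpolating measures to the given neighbourhood, which forces the argument to restrict to measures concentrated on a small ball and to track that the corresponding minimising geodesics of $\M$ --- and therefore their lifts --- do not leave an evenly covered chart. The accompanying housekeeping, namely that non-branching and ``$\mathcal P_\infty$ is a geodesic space'' are inherited by the lift, is plausible because $p$ is a local isometry but still requires the local-to-global patching to be carried out carefully on $\mathsf{\hat M}$.
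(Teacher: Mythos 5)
Your proof is correct and follows the same strategy as the paper: lift the local hypotheses to the universal cover, invoke Theorem~\ref{glob} to obtain $\mathsf{CD}^*(K,N)$ globally upstairs, apply Corollary~\ref{bonnet} for compactness and the diameter bound, and deduce finiteness of $\pi_1(\M,x_0)$ from finiteness of the fibre via Theorem~\ref{card}. Your treatment is considerably more careful than the paper's one-line assertion that ``local properties are transferred''; in particular, you correctly identify and resolve the subtlety of Remark~\ref{remark}(iii) -- that the Wasserstein geodesic produced by $\mathsf{CD}^*_{\mathsf{loc}}$ need not remain in the prescribed neighbourhood -- by restricting to measures supported in a ball $B_r(\hat x)$ so small that the carrying geodesics in $\mathsf{supp}(\m)$ stay inside an evenly covered chart of radius $3r$, which is exactly what makes the lift of the local condition legitimate.
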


\begin{proof}
\begin{itemize}
\item[(i)] Due to the construction of the lift, the local properties of $(\mathsf{M},\mathsf{d},\mathsf{m})$ are transferred to $(\mathsf{\hat M},\mathsf{\hat d},\mathsf{\hat m})$. That means, $(\mathsf{\hat M},\mathsf{\hat d},\mathsf{\hat m})$ is a non-branching metric measure space $(\mathsf{\hat M},\mathsf{\hat d},\mathsf{\hat m})$ satisfying $\mathsf{CD}^*(K,N)$ locally. Theorem \ref{glob} implies that $(\mathsf{\hat M},\mathsf{\hat d},\mathsf{\hat m})$ satisfies $\mathsf{CD}^*(K,N)$ globally and therefore, the diameter estimate of Bonnet-Myers -- Corollary \ref{bonnet} -- can be applied.
\item[(ii)] If the fundamental group $\pi_1(\mathsf{M},x_0)$ were infinite then the support of $\hat\m$ could not be compact according to Theorem \ref{card}.
\end{itemize}
\end{proof}

\begin{remark}
Note that there exists a universal cover for any Gromov-Hausdorff limit of a sequence of complete Riemannian manifolds with a uniform lower bound on the Ricci curvature \cite{sor}. The limit space may have infinite topological type \cite{men}.
\end{remark}

\end{document}